\newcommand{\myauthor}{Bill Deng, Mircea Voineagu}
\newcommand{\mytitle}{Bredon motivic cohomology of the real numbers}
\title{\mytitle}
\author{  
Bill Deng,
Mircea Voineagu}
\date{}
\setlist[enumerate]{font=\upshape, topsep=0.5pt,itemsep=-0.5ex,partopsep=1ex,parsep=1ex} 
\setlist[enumerate,1]{label = (\arabic*)}
\setlist[1]{labelindent=\parindent} 
\setlist[itemize,1]{topsep=0.5pt,itemsep=-0.5ex,partopsep=1ex,parsep=1ex} 
\definecolor{refkey}{gray}{0.5}
\definecolor{labelkey}{gray}{0.5}
\definecolor{note}{rgb}{0.94, 0.99, 1.00}
\colorlet{myurlcolor}{Aquamarine}
\colorlet{mylinkcolor}{violet}
\colorlet{mycitecolor}{YellowOrange}
\definecolor{reference}{rgb}{.93,.51,.93}
\definecolor{citation}{rgb}{1,.68,.26}
\definecolor{mrnumber}{rgb}{.80,.40,0}
\newcommand\myshade{85}
\tikzstyle{mygrid}=[gray!25!white]
\tikzstyle{myfill} = [fill=gray, fill opacity= 0.2]
\tikzstyle{myhatch} = [pattern=north west lines, pattern color=blue!35]
\tikzstyle{myfill1} = [fill=green!70!white, fill opacity= 0.1]
\tikzstyle{myfill2} = [fill=blue!70!white, fill opacity= 0.1]
\tikzstyle{myfill3} = [fill=red!70!white, fill opacity= 0.1]
\tikzstyle{myline} = [gray!40]
\newcommand{\C}{\mathds{C}} 
\newcommand{\A}{\mathds{A}}
\renewcommand{\P}{\mathds{P}}
\newcommand{\Z}{\mathds{Z}}
\newcommand{\R}{\mathds{R}}
\newcommand{\rH}{\widetilde{H}}
\newcommand{\Br}{{Br}}
\newcommand{\pt}{{pt}}
\renewcommand{\Re}{\mathrm{Re}}
\newcommand{\Sm}{\mathrm{Sm}}
\newcommand{\SH}{\mathrm{SH}}
\newcommand{\EG}{\mathbf{E}}
\newcommand{\BG}{\mathbf{B}}
\newcommand{\EGt}{\widetilde{\EG} }
\newcommand{\wt}[1]{\widetilde{#1}}
\newcommand{\ul}[1]{\underline{\smash{#1}}}
\newcommand{\al}{\alpha}
\newcommand{\hr}{\tilde{H}}
\newcommand{\susp}{\Sigma}
\newcommand{\Si}{\Sigma}
\newcommand{\sd}{_{\Si_2}}
\newcommand{\tl}{\tilde}
\newcommand{\dl}{\delta}
\newcommand{\te}{\tl{E}}
\newcommand{\et}{{\te\sd\ct}}
\newcommand{\ct}{{C_2}}
\newcommand{\fc}{\frac}
\newcommand{\ta}{\theta}
\newcommand{\txo}{\quad\text{or}\quad}
\newcommand{\txa}{\quad\text{and}\quad}
\newcommand{\ka}{\kappa}
\newcommand{\io}{\iota}
\newcommand{\cp}{\cdot}
\newcommand{\kc}[1]{\ka_2^{-#1}}
\newcommand{\ek}{{E_{\Sigma_2}C_2}}
\newcommand{\xto}[1]{\xrightarrow{#1}}
\newcommand{\st}{\star}
\newcommand{\MMt}{\mathds{M}^{C_2}_2} 
\newcommand{\MMtn}{\mathds{M}^{C_2}_n}
\DeclareMathOperator*{\colim}{\mathrm{colim}}
\DeclareMathOperator{\spec}{\mathrm{Spec}}
\newcommand{\cd}{\smash\cdot}
\newcommand{\subalign}[1]{%
	\vcenter{%
		\Let@ \restore@math@cr \default@tag
		\baselineskip\fontdimen10 \scriptfont\tw@
		\advance\baselineskip\fontdimen12 \scriptfont\tw@
		\lineskip\thr@@\fontdimen8 \scriptfont\thr@@
		\lineskiplimit\lineskip
		\ialign{\hfil$\m@th\scriptstyle##$&$\m@th\scriptstyle{}##$\hfil\crcr
			#1\crcr
		}%
	}%
}
\numberwithin{equation}{section} %Fiddles with numbering system of the following.
\theoremstyle{plain} 
\newaliascnt{theorem}{equation}  
\newtheorem{theorem}[theorem]{Theorem}  
\newaliascnt{proposition}{equation}  
\newtheorem{proposition}[proposition]{Proposition}
\newaliascnt{lemma}{equation}    
\newtheorem{lemma}[lemma]{Lemma}
\newaliascnt{corollary}{equation}  
\newtheorem{corollary}[corollary]{Corollary}
\newaliascnt{claim}{equation}  
 \theoremstyle{definition}
\newaliascnt{definition}{equation}  
\newtheorem{definition}[definition]{Definition}
\newaliascnt{example}{equation}  
\newaliascnt{remark}{equation}   
\newtheorem{remark}[remark]{Remark}
\newaliascnt{condition}{equation}
\newaliascnt{notationconvention}{equation}
\newaliascnt{notation}{equation}
\begin{document} 
  \maketitle
  \begin{abstract}
Over the real numbers with $\Z/2-$coefficients, we compute the $C_2$-equivariant Borel motivic cohomology ring, the Bredon motivic cohomology groups and prove that the Bredon motivic cohomology ring of real numbers is a proper subring in the $RO(C_2\times C_2)$-graded Bredon cohomology ring of a point. 

This generalizes Voevodsky's computation of the motivic cohomology ring of the real numbers to the $C_2$-equivariant setting. These computations are extended afterwards to any real closed field.

\paragraph{Keywords.}
   Motivic homotopy theory, equivariant homotopy theory, Bredon cohomology.
   
\paragraph{Mathematics Subject Classification 2010.}
Primary:
    \href{https://mathscinet.ams.org/msc/msc2010.html?t=14Fxx&btn=Current}{14F42},
    \href{https://mathscinet.ams.org/msc/msc2010.html?t=55Pxx&btn=Current}{55P91}.
Secondary:
    \href{https://mathscinet.ams.org/msc/msc2010.html?t=55Pxx&btn=Current}{55P42},
    \href{https://mathscinet.ams.org/msc/msc2010.html?t=55Pxx&btn=Current}{55P92}.
    
\end{abstract} 

  \tableofcontents
  
\section{Introduction}

 A fundamental principle of modern homotopy theory is that group actions on homotopical objects reveal interesting and otherwise hard to find information about the underlying homotopical objects. Given the importance of motivic stable homotopy theory and its relations with equivariant stable homotopy theory (see for example \cite{BS}, \cite{Bac}), it is therefore important to study equivariant motivic stable homotopy theory (see for example \cite{Hoy1}, \cite{HOV1}, \cite{Del} \cite{HKO}). Bredon motivic cohomology, given by the equivariant study of Voevodsky's motivic cohomology spectrum, was introduced in \cite{HOV} and \cite{HOV1}, and belongs to a larger group of $C_2$-motivic invariants, such as Hermitian K-theory or motivic real cobordism. Bredon motivic cohomology also appears as the zero slice of the equivariant motivic sphere \cite{OH}.
 
 Concrete computations in Bredon motivic cohomology are essential for applications of the theory to other motivic and topological invariants. In many cases, these computations shed new light on the well-known computations of classical motivic cohomology, and, as in the case of Bredon cohomology, they are more difficult and contain more information about the underlying object, even in the case of a trivial $C_2$-action.
 
 In \cite{HOV2}, the second author together with J.Heller and P.A.{\O}stv{\ae}r  computed completely the Bredon motivic cohomology rings of the complex numbers  and of $\EG C_2$ (over the complex numbers). In this paper, we compute the Bredon motivic cohomology of the real numbers and of $\EG C_2$ (over the real numbers). In particular, we generalize the classical motivic cohomology computation of Voevodsky of the motivic cohomology of the real numbers by computing the Bredon motivic cohomology groups of real numbers and by showing that the Bredon motivic cohomology ring of the real numbers is a subring of the $RO(C_2\times\Sigma_2)$-graded Bredon cohomology of a point. Moreover, we show that the Borel motivic cohomology ring of the real numbers is a Laurent polynomial ring in one variable and a polynomial ring in another two variables  over the $RO(C_2)$-graded Bredon cohomology of a point, generalizing and shedding new light on Voevodsky's computation of the motivic cohomology of $\BG C_2$ (over the reals). Relating these motivic invariants to their equivariant topological counterparts through realization we also obtain previously unknown results in equivariant homotopy, especially about the periodicity of $RO(C_2\times\Sigma_2)$ graded Bredon cohomology of $E_{\Sigma_2}C_2$.
 
 One of the advantages of the computation in the complex case when compared with our paper is that the one dimensional $RO(C_2)$-graded cohomology of a point (computed originally by Stong and used in \cite{HOV2}) is much simpler than the higher dimensional $RO(C_2\times \Sigma_2)$-graded cohomology of point (computed by Holler and Kriz in \cite{HK}  and used in this paper). This makes most of arguments in \cite{HOV2} not to extend to our case.  Another difficulty in the real case as opposed to the complex case is that the $C_2\times \Sigma_2$ topological isotropy sequence that is needed here is more complicated than its complex counterpart and to our knowledge, not previously studied. Both  computations in the cases of real and complex numbers (and therefore, by the usual rigidity theory, of real closed fields and algebraically closed fields of characteristic zero) are part of the understanding of the hard to compute and largely unknown Bredon motivic cohomology ring of an arbitrary field as well as of the $C_2$-equivariant motivic Steenrod algebra of cohomology operations.
  
   Our computations are organized via modules over the $RO(C_2)$-graded Bredon cohomology ring of a point. Before presenting our computations, we recall this ring and introduce some notation used to explain our results. We also recall some basics of equivariant motivic homotopy theory and Bredon motivic cohomology.
   
   \subsection{Equivariant motivic homotopy theory}
   The stable equivariant motivic homotopy category $\SH^{C_2}(k)$ is the stabilization of Voevodsky's category of equivariant motivic spaces  \cite{Del}, with respect to Thom spaces of representations. We recall a few key facts and the notation that we use in the case where $G=C_2$. See \cite{Hoy} or \cite{HOV1} for details.

 Let $V=a+p\sigma$ be  a $C_2$-representation, where $a$ denotes the $a$-dimensional trivial representation and $p\sigma$ is the $p$-dimensional sign representation. 
 We write $\A(V)$ and $\P(V)$ for the  $C_2$-schemes $\A^{\dim(V)}$ and $\P^{\dim(V)-1}$ equipped with the corresponding action coming from $V$. The associated motivic representation sphere is 
 \[
 T^V:=\P(V\oplus 1)/\P(V).
 \] 
 Indexing is based on the following four spheres. There are two topological spheres $S^1$, $S^\sigma$ and two algebro-geometric spheres $S_t=(\A^1\setminus\{0\},1)$ equipped with trivial action, and $S^{\sigma}_t=(\A^1\setminus\{0\},1)$ equipped the 
$C_2$-action $x\rightarrow x^{-1}$.  We write
 \[
 S^{a+p\sigma,b+q\sigma}:=S^{a-b}\wedge S^{(p-q)\sigma}\wedge S^b _t\wedge S^{q\sigma}_t.
 \]
 In this indexing, we have $T\simeq S^{2,1}$ and $T^\sigma\simeq S^{2\sigma, \sigma}$. 
The stable equivariant motivic homotopy category
 $\SH^{C_2}(k)$ is the stabilization of (based) $C_2$-motivic spaces with respect to the motivic sphere 
 $T^{\rho}$ corresponding to the regular representation $\rho= 1 +\sigma$. 
 
 We make use of two fundamental cofiber sequences in 
 $\SH^{C_2}(k)$. The first is
 \begin{equation}\label{eqn:cof1}
 C_{2+}\to S^{0}\to S^{\sigma}.
 \end{equation}
 The second is
 \begin{equation}\label{eqn:cof2}
 \EG C_{2+}\rightarrow S^0\rightarrow \EGt C _2.
 \end{equation}
 Here, $\EG C_{2}$ is the universal free motivic $C_2$-space. It has a geometric model, $\EG C _2= \colim _n\A(n\sigma)\setminus \{0\}$, see \cite[Section 3]{GH}.
The quotient  
$\EG C _2/C _2=\colim _n \left(\A(n\sigma)\setminus \{0\}\right)/C _2$ is the geometric classifying space $\BG C_2$ constructed by Morel-Voevodsky \cite{MVo} and Totaro \cite{T}.  
 Note that  
   $\EGt C _2=\colim _n S^{2n\sigma,n\sigma}$.
 In particular, the maps $S^0\to T^\sigma$ and $S^0\to S^\sigma$ induce equivalences
 \[
 	\EGt C_2 \xto{\simeq} T^{\sigma}\wedge\EGt C_2\,\,\,\textrm{  and  }\,\,\,
 	\EGt C_2 \xto{\simeq} S^{\sigma}\wedge\EGt C_2,
 \]
  see \cite[Proposition 2.9]{HOV1}.

 Equipping a variety with the trivial action yields an embedding $\Sm_{k}\to \Sm^{C_2}_k$ which induces 
a functor $\SH(k)\to \SH^{C_2}(k)$.

   \subsection{Bredon motivic cohomology}
   Bredon motivic cohomology is represented in $\SH^{C_2}(k)$ by  
 the spectrum $M\ul{A}$ associated to an abelian group $A$, where 
 $M\ul{A} _n=A _{tr,C _2}(T^{n\rho})$ is the free presheaf with equivariant transfers, see \cite{HOV1} for details. Here $k$ is an arbitrary field and $\rho$ denotes the $C_2$ regular representation $k[C_2]$.

  \begin{definition}[\cite{HOV1}] The Bredon motivic cohomology of a motivic $C _2$-spectrum $E$ with coefficients in an abelian group $A$ is defined by 
  \[
  \rH^{a+p\sigma,b+q\sigma} _{C_2}(E,A)=[E,S^{a+p\sigma,b+q\sigma}\wedge M\ul{A}] _{\SH^{C _2}(k)}.
  \]
  \end{definition}

If $X\in \Sm^{C_2}_k$ we typically write
\[
H^{a+p\sigma,b+q\sigma}_{C_2}(X,A) := 
\rH^{a+p\sigma,b+q\sigma}_{C_2}(X_+, A).
\]
We call Borel motivic cohomology of a $C_2-$scheme $X$ the Bredon motivic cohomology of the $C_2-$scheme $X\times \EG C_2$.

 When $A$ is a ring, then $H^{\star,\star} _{C _2}(X, A)$ is a graded commutative ring by \cite[Proposition 3.24]{HOV1}. Specifically this means that if $x\in H^{a+p\sigma, b+q\sigma} _{C _2}(X, A)$ and $y\in H^{c+s\sigma, d+t\sigma} _{C _2}(X, A)$, then
\[
x\cup y = (-1)^{ac+ps}y\cup x.
\]
Notice that when $A=\Z/2$, the corresponding Bredon motivic cohomology is a commutative ring.
 
A few features of this theory, which we use are the following (see \cite{HOV1}, \cite{HOV2}).
\begin{itemize}[label = {$\cdot$}]
\item If  $E$ is in the image of $\SH(k)\to \SH^{C_2}(k)$, i.e. it has ``trivial action", then  there is an isomorphism in integral bidegrees with ordinary motivic cohomology,
\[
\rH^{a,b}_{C _2}(E, A) \simeq \rH^{a,b}(E, A).
\]

\item If $X$ has free action, then there is an isomorphism in integral bidegrees with ordinary motivic cohomology,
\[
H^{a,b} _{C _2}(X,A)\simeq H^{a,b}(X/C_2, A).
\]
\item  $H^{\star,\star}_{C_2}(\EG C_2, A)$  is  $(-2+2\sigma,-1+\sigma)$-periodic over any field $k$. The periodicity is given by multiplication with an invertible element $\kappa_2\in H^{2\sigma-2,\sigma-1}_{C_2}(\EG C_2, \ul{A})$ over real numbers. Over complex numbers, we denote the invertible element with $u\in H^{2\sigma-2,\sigma-1}_{C_2}(\EG C_2,A)$.

\item Over complex numbers, Borel motivic cohomology ring is $$H^{\star,\star}_{C_2}(\EG C_2, \Z/2)\simeq \MMt [\tau_\sigma,u^{\pm 1}]$$ with $\tau_\sigma\in H^{0,\sigma}_{C_2}(\EG C_2,\Z/2)$ being the only nontrivial element. We write $\MMt:=H^{*+*\sigma}_{\Br}(\pt, \Z/2)$.

\item The Bredon motivic cohomology groups of complex numbers are given by the following diagram \cite{HOV2}:

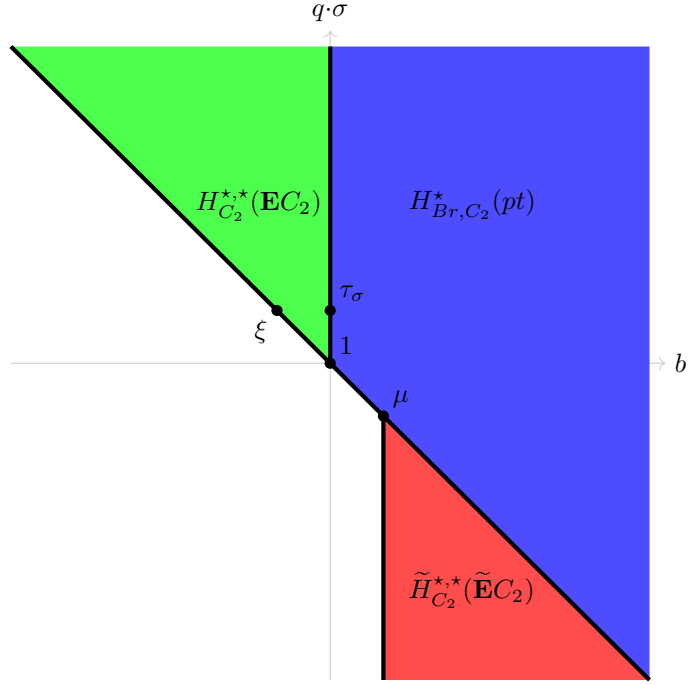
\begin{figure}[H]
	\begin{tikzpicture}[scale=0.7]
		\draw[->, thin, gray!40] (-6.0,0) -- (6.3,0) node[right, black] {$b$};
		\draw[->, thin,  gray!40] (0,-6.0) -- (0,6.3) node[above, black] {$q\cd\sigma$};
		\path[style = myfill1] (0,0) -- (-6,6) -- (0,6) -- (0,0);
		\path[style = myfill2] (0,0) -- (0,6) -- (6,6) -- (6,-6) -- (0,0);
		\path[style = myfill3] (1,-1) -- (1,-6) -- (6,-6) -- (1,-1);
		\draw[ultra thick] (0,0) -- (-6,6);
		\draw[ultra thick] (0,0) -- (0,6);
		\draw[ultra thick] (0,0) -- (6,-6);
		\draw[ultra thick] (1,-1) -- (1,-6); 
		\node at (-2.7,3) [right] {$H^{\star,\star}_{C_2}(\EG C_2)$};
		\node at (1.3,3) [right] {$H^{\star}_{\Br,C_2}(\pt)$};
		\node at (1.3,-4.3) [right] {$\rH^{\star,\star}_{C_2}(\wt\EG C_2)$};
		\fill (0,1) circle(3pt) node[above right] {$\tau_\sigma$};
		\fill (0,0) circle(3pt) node[above right] {$1$};
		\fill (-1,1) circle(3pt) node[below left] {$\xi$};
		\fill (1,-1) circle(3pt) node[above right] {$\mu$};
		
	\end{tikzpicture} \hfill 
	\caption{Regions of {$H^{\star, b+q\sigma}_{C_2}(\C,\Z/2)$} determined by $\EG C_2$, Betti realization, and $\wt\EG C_2$. The degrees of the displayed elements are $|\xi|=(-2+2\sigma, -1+\sigma)$, $|\mu| = (0,1-\sigma)$, $|\tau_\sigma| = (0,\sigma)$.   }
	\label{TEST0}
\end{figure}
\end{itemize}
A point $(b,q)$ in the Figure \ref{TEST0} is given by the graded $\Z/2$-vector space $\oplus_{a,p\in \Z^2}H^{a+p\sigma,b+q\sigma}_{C_2}(\C,\Z/2)$. At each point $(b,q)$ we have a realization map $$ H^{\st,b+q\sigma}_{C_2}(\C,\Z/2)\rightarrow H^{\st}_{Br}(pt,\Z/2).$$

The map of sites 
  $\Sm _\R^{C_2}\rightarrow {\rm Top}^{C_2\times \Sigma_2}$, given by $X\rightarrow X(\C)$, where the set of complex points is equipped with the analytic topology,  extends to a functor
  $\Re:\SH^{C _2}(\R)\rightarrow \SH^{C _2\times \Sigma_2}$ between the stable equivariant motivic homotopy category over $\R$ and the classical stable equivariant homotopy category. We refer to this functor as ``Betti realization," or simply ``realization".
  
  The way the motivic spheres interact with the topological spheres through Betti realization is
  
$$\Re (S^{a+p\sigma,b+q\sigma})=S^{a-b+(p-q)\sigma+b\epsilon+q\sigma\otimes\epsilon}.$$  
Here we have the following nontrivial one dimensional $C_2\times \Sigma_2$-representations: $\sigma$ ($C_2$ nontrivial action for the first component), $\epsilon$ ($C_2$ nontrivial action for the second component) and $\sigma\otimes \epsilon$. We have the following four maps giving four $C_2\times \Sigma_2$-one dimensional irreducible representations
$$\Z/2\times\Z/2\rightarrow \Z/2\hookrightarrow GL_1(\R)$$
given by $(k_1,k_2)\rightarrow ik_1+jk_2$ where $i,j\in\{0,1\}$ are fixed choices. Now $\sigma$ is the choice $(i,j)=(1,0)$, $\epsilon$ is the choice $(i,j)=(0,1)$ and $\sigma\otimes \epsilon$ is the choice $(i,j)=(1,1)$. The identity representation is given by the choice $(i,j)=(0,0)$. The way $\Z/2=\{0,1\}$ embeds in $GL_1(\R)$
 is by sending $0$ to multiplication by $1$ and sending $1$ to multiplication by $-1$.
 
  By \cite[Theorem A.29]{HOV1}, $\Re(M\ul{A})  \simeq H\ul{A}$, where $H\ul{A}$ is the equivariant Eilenberg-MacLane spectrum associated to the constant Mackey functor $\ul{A}$. 
  
   In particular, for any smooth $C _2$-scheme over $\R$ there is a realization map
\[
\Re :H^{a+p\sigma,b+q\sigma} _{C _2}(X, A)\rightarrow H^{a-b+(p-q)\sigma+b\epsilon+q\sigma\otimes\epsilon}_{\Br}(X(\C),A).
\] 
  We write $S(V)$ for the unit sphere included in the disk $D(V)$ given by any actual $C_2\times\Sigma_2$-representation $V$. Let $E_{\Sigma_2}C_2$ is the $\Sigma_2$-equivariant universal free $C_2$-space (see [\cite{May},VII.1]) given by $$E_{\Sigma_2}C_2=colim_n S(n\sigma+n\sigma\otimes\epsilon).$$
  
Betti realization takes the cofiber sequences \eqref{eqn:cof1} and \eqref{eqn:cof2} to the corresponding ones in $\SH^{C_2\times \Sigma_2}$.  These are 
 \begin{equation}\label{eqn:cof3}
 C_{2+}\to S^{0}\to S^{\sigma}.
 \end{equation}
and 
 \begin{equation}\label{eqn:cof4}
  E_{\Sigma_2}C_{2+}\rightarrow S^0\rightarrow \tilde{E}_{\Sigma_2} C _2.
 \end{equation}
The cofiber sequence \ref{eqn:cof3} is associated to the $C_2\times \Sigma_2$ representation $\sigma$; consequently we have by symmetry two more similar topological cofiber sequences associated to the $C_2\times \Sigma_2$ representations $\epsilon$ and $\sigma\otimes \epsilon$.  The same happens for the cofiber sequence \ref{eqn:cof4}; there are two other symmetric topological cofiber sequences depending on which two one dimensional $C_2\times \Sigma_2$ representations are chosen. According to $\cite{HOV1}$, over the reals we have 
 $$Re(\EG C_2)=E_{\Sigma_2}C_2,$$ where by $\Sigma_2$ we denote the second copy of $C_2$ in $C_2\times C_2$.

This implies that the Bredon cohomology  of the $C_2\times\Sigma_2$-space $E_{\Sigma_2}C_2$ has a $-1+\sigma-\epsilon+\sigma\otimes\epsilon$ periodicity because the Bredon motivic cohomology of $\EG C_2$ is $(2\sigma-2,\sigma-1)$ periodic [\cite{HOV1}, Theorem 5.4]. According to the topological realization, we also have that $\tilde{E}_{\Sigma_2}C_2$ has, in its reduced Bredon cohomology, periodicities $\sigma$ and $\sigma\otimes\epsilon$ because the reduced Bredon motivic cohomology of $\wt\EG C_2$ is $(0,\sigma)$ and $(\sigma,0)$ periodic [\cite{HOV1}, Proposition 5.7].
This is because $\tilde{E}_{\Sigma_2}C_2$ is the Betti realization of $\wt\EG C_2$. 

Throughout this paper, we refer to the cofiber sequence \ref{eqn:cof4} as the ``$C_2\times \Sigma_2$ topological isotropy sequence".

\subsection{$RO(C_2)$-graded Bredon cohomology of a point}
 We present our computations as modules over the $RO(C_2)$-graded Bredon cohomology of a point. The natural module structure is given by the fact that Betti realization induces an isomorphism of bigraded rings 
$$H^{\star,0}_{C _2}(\R,\Z/n)\simeq \MMtn,$$ and 
so $H^{\star,\star}_{C_2}(X,\Z/n)$ is a module over $\MMtn$. 

In fact, by \cite{V},  Betti realization is an isomorphism in weight zero, even with $\Z$-coefficients. In Section 4, we will study a more general case ($q\in \Z$) than the proposition below ($q=0$). 
The following proposition is a well known Voevodsky's computation. We use a notation in which $RO(C_2)-$Bredon cohomology ring of a point is viewed as a subring in $RO(C_2\times \Sigma_2)-$Bredon cohomology ring of a point induced by the $C_2\times \Sigma_2$ nontrivial irreducible representation $\epsilon$. 

\begin{proposition}\label{classic} $H^{a,b}(\R,\Z/2)\simeq H^{a-b+b\epsilon} _{Br}(pt,\Z/2)$ for any $a\in \Z, b\geq 0$. Moreover, as rings, $$H^{*,*}(\R,\Z/2)\simeq \Z/2[x_2,y_2]$$ where the last expression is the positive cone  of the Bredon cohomology of a point (i.e. $a+p\geq 0$, $p\geq 0$ in Figure \ref{TEST}) with $deg(x_2)=(1,1)$ and $deg(y_2)=(0,1)$. The realization maps are monomorphisms, but not isomorphisms if $b+1<a\leq 0$.
\end{proposition}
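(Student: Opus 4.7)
The plan is to reduce the statement to Voevodsky's computation of the motivic cohomology ring of $\R$ (the Milnor conjecture) and then to match this under realization with Stong's description of the $RO(C_2)$-graded Bredon cohomology of a point.

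First, I would invoke Voevodsky's Milnor conjecture to assert that $H^{n,n}(\R,\Z/2)\cong K^M_n(\R)/2$, which for $\R$ is generated by $\rho^n$ with $\rho=[-1]\in H^{1,1}(\R,\Z/2)$. Combined with the universal existence of the weight-one class $\tau\in H^{0,1}(\R,\Z/2)$ lifting $1\in H^0_{\mathrm{et}}(\R,\mu_2)$ and the Beilinson--Lichtenbaum vanishing $H^{a,b}(\R,\Z/2)=0$ whenever $a>b$ or $b<0$, this gives the polynomial presentation $H^{*,*}(\R,\Z/2)=\Z/2[\tau,\rho]$ with $|\tau|=(0,1)$, $|\rho|=(1,1)$; in particular $H^{a,b}(\R,\Z/2)=\Z/2$ exactly for $0\leq a\leq b$.

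Second, I would track the realization. Since the motivic bigrading has trivial $C_2$-action, the formula $\Re(S^{a,b})\simeq S^{(a-b)+b\epsilon}$ specializes the realization map to $H^{a,b}(\R,\Z/2)\to H^{(a-b)+b\epsilon}_{Br}(\pt,\Z/2)$. In these degrees $\Re(\tau)\in H^{-1+\epsilon}_{Br}(\pt,\Z/2)$ and $\Re(\rho)\in H^{\epsilon}_{Br}(\pt,\Z/2)$. By Stong's computation, the positive cone of $RO(C_2)$-graded Bredon cohomology of a point with $\Z/2$-coefficients is itself the polynomial ring $\Z/2[u_\epsilon,a_\epsilon]$ with $|u_\epsilon|=-1+\epsilon$ and $|a_\epsilon|=\epsilon$, and each graded piece in the range $-q\leq p\leq 0$, $q\geq 0$ is one-dimensional. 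A direct argument (e.g.\ using that the cycle class and Betti realization of the class $[-1]$ are nonzero, together with multiplicativity) forces $\Re(\tau)=u_\epsilon$ and $\Re(\rho)=a_\epsilon$. Since both sides are one-dimensional in matching bidegrees, the induced map $\Z/2[\tau,\rho]\to\Z/2[u_\epsilon,a_\epsilon]$ is an isomorphism of rings, establishing the first assertion for $b\geq 0$.

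Third, for the strict monomorphism range $b+1<a\leq 0$, I note $b\leq -2$ so $H^{a,b}(\R,\Z/2)=0$ by vanishing in negative weight. On the other hand the target degree $(a-b)+b\epsilon$ has integer part $a-b\geq 2$ and $\epsilon$-part $b\leq -2$, placing it in the negative cone of $H^{\star}_{Br}(\pt,\Z/2)$, which is nontrivial there by Stong. Thus the realization is the zero map into a nontrivial group, hence a strict (proper) monomorphism as claimed.

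The main obstacle is simply the invocation of the Milnor conjecture for $\R$; the remainder of the argument is bookkeeping of bidegrees and a comparison with Stong's ring. The only subtle step beyond this is verifying that $\Re(\tau)$ and $\Re(\rho)$ are the distinguished generators rather than zero, but this is forced by the $\Z/2$-dimensionality of the relevant graded pieces together with the nonvanishing of $\tau$ and $\rho$ in the source.
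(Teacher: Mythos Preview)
The paper does not give its own proof of this proposition; it is simply cited as Voevodsky's result. Your reconstruction is the standard argument and is correct in outline: Voevodsky's computation yields $H^{*,*}(\R,\Z/2)=\Z/2[\tau,\rho]$, Stong's computation yields the positive cone $\Z/2[u_\epsilon,a_\epsilon]$ in matching degrees, realization is a ring map landing in the right bidegrees, and the strict-monomorphism range is exactly where the source vanishes (negative weight) while the target hits the negative cone.

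One small logical slip: in your final paragraph you say that nonvanishing of $\tau,\rho$ in the source together with one-dimensionality of the graded pieces ``forces'' $\Re(\tau),\Re(\rho)\neq 0$. That is not enough by itself; a ring map between one-dimensional $\Z/2$-pieces can still be zero. You had it right earlier when you appealed to the nontriviality of the Betti realization of $[-1]$ (this is the Euler class $a_\epsilon$ of the sign representation, classically nonzero) and to the identification of $\tau$ with the \'etale generator (so its realization is the nonzero element of $H^0_{\mathrm{sing}}(\pt,\Z/2)\cong H^{-1+\epsilon}_{\Br}(\pt,\Z/2)$). Once those two generators realize nontrivially, multiplicativity and one-dimensionality do the rest. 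This is a minor point of exposition, not a real gap in the mathematics.
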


\begin{proposition}\label{prop:ptiso}
Let $A$ be a finite abelian group and $b\geq 0$. For any $a,p\in \Z$, Betti realization induces an isomorphism
\[
H^{a+p\sigma, b}_{C_2}(\R, A)\xrightarrow{\simeq} H^{a-b+p\sigma+b\epsilon}_{\Br}(\pt,A).
\]
\end{proposition}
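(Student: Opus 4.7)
My plan is to induct on $|p|$ with $b\geq 0$ fixed, using the motivic cofiber sequence $C_{2+}\to S^0 \to S^\sigma$ of \eqref{eqn:cof1} and comparing it with the topological sequence \eqref{eqn:cof3} via Betti realization. The base case $p=0$ is Proposition \ref{classic} (stated for $\Z/2$); the extension to an arbitrary finite abelian group $A$ is obtained by splitting $A$ into $\ell$-primary cyclic factors, with the odd-primary pieces contributing only in bidegree $(0,0)$ (since $\R^{\ast}$ is uniquely odd-divisible) and the $2$-primary pieces reducing to the $\Z/2$-case via a Bockstein argument.

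For the inductive step I apply $H^{a+p\sigma, b}_{C_2}(-, \ul{A})$ to \eqref{eqn:cof1}. The suspension isomorphism yields $H^{a+p\sigma, b}_{C_2}(S^\sigma, \ul{A}) \cong H^{a+(p-1)\sigma, b}_{C_2}(\R, \ul{A})$, while the Wirthm\"uller-type adjunction for the induced spectrum $C_{2+} \simeq (C_2)_+\wedge_e S^0$ identifies
\[
H^{a+p\sigma, b}_{C_2}(C_{2+}, \ul{A}) \;\cong\; H^{a+p, b}(\R, A),
\]
the \emph{ordinary} motivic cohomology of $\R$ in the underlying bidegree. The analogous Wirthm\"uller identification on the topological side, together with the observation that complex conjugation acts trivially on the $\R$-points of $C_2$ so that the topological $C_{2+}$ is induced from $\Sigma_2\subset C_2\times\Sigma_2$, identifies $H^{a-b+p\sigma+b\epsilon}_{Br}(C_{2+}, \ul{A}) \cong H^{a+p-b+b\epsilon}_{Br}(\pt, \ul{A})$. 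Betti realization then produces a commutative ladder between the motivic and topological long exact sequences, whose rungs at the ordinary motivic cohomology groups are isomorphisms by Proposition \ref{classic}.

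Under the inductive hypothesis that realization is an isomorphism on $H^{a+(p-1)\sigma, b}_{C_2}(\R, \ul{A})$ for every $a$, the five-lemma applied to the five-term window centred on $H^{a+p\sigma, b}_{C_2}(\R, \ul{A})$ yields the required isomorphism for weight $a+p\sigma$. Running the same five-lemma on the adjacent five-term window lets the statement propagate from $p$ to $p-1$, so the induction covers all $p\in\Z$ starting from $p=0$. The main obstacle I expect is not the five-lemma step itself but checking that the Wirthm\"uller-type identifications above strictly commute with Betti realization --- in particular, that the connecting homomorphisms of the two long exact sequences are matched under realization; once this compatibility is established, the rest of the argument is formal.
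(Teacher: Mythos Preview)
Your proposal is correct and follows essentially the same approach as the paper: the base case $p=0$ is Voevodsky's computation (Proposition~\ref{classic}), and the inductive step is the five-lemma applied to the comparison of long exact sequences arising from the cofiber sequence $C_{2+}\to S^0\to S^\sigma$ and its Betti realization. You are more explicit than the paper about the Wirthm\"uller identification for $C_{2+}$ and about reducing general finite $A$ to the $\Z/2$ case, but the strategy is identical.
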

\begin{proof}
If $b\geq 0$, then $H^{a,b}(\R, A)\to H^{a-b+b\epsilon}_{Br}(pt, A)$ is an isomorphism for all $a\in \Z$ according to Voevodsky's computation in Proposition \ref{classic}. In particular, the result holds for $p=0$.  Using the comparison long exact sequence produced from the realization of the cofiber sequence \ref{eqn:cof1} and the five lemma, the result holds for all $p$ by induction. For example we have the diagram
\[\begin{tikzcd}
	{H^{a,b}_{C_2}(C_2)} & {H^{a,b}_{C_2}(\R)} & {H^{a+\sigma,b}_{C_2}(\R)} & {H^{a+1,b}_{C_2}(C_2)} & {H^{a+1,b}_{C_2}(\R)} \\
	{H^{a-b+b\epsilon}_{Br,C_2}(C_2)} & {H^{a-b+b\epsilon}_{Br,C_2}(pt)} & {H^{a-b+\sigma+b\epsilon}_{Br,K}(pt)} & {H^{a+1-b+b\epsilon}_{Br,C_2}(C_2)} & {H^{a+1-b\epsilon}_{Br,C_2}(pt)}
	\arrow[from=1-1, to=1-2]
	\arrow["\simeq"', from=1-1, to=2-1]
	\arrow[from=1-2, to=1-3]
	\arrow["\simeq"', from=1-2, to=2-2]
	\arrow[from=1-3, to=1-4]
	\arrow[from=1-3, to=2-3]
	\arrow[from=1-4, to=1-5]
	\arrow["\simeq"', from=1-4, to=2-4]
	\arrow["\simeq"', from=1-5, to=2-5]
	\arrow[from=2-1, to=2-2]
	\arrow[from=2-2, to=2-3]
	\arrow[from=2-3, to=2-4]
	\arrow[from=2-4, to=2-5]
\end{tikzcd}\]
that induces isomorphism in the bidegree $(a+\sigma,b)$. Inductively, we obtain the isomorphism on all bidegrees $(a+p\sigma,b)$, $p\geq 0$. For the bidegrees $(a-p\sigma,b)$, $p\geq 0$ we look to the following diagram
\begin{tikzcd}
	{H^{a-1,b}(pt)} & {H^{a-1,b}(C_2)} & {H^{a-\sigma,b}(pt)} & {H^{a,b}(pt)} & {H^{a,b}(C_2)} \\
	{H^{a-1-b+b\epsilon}_{Br,C_2}(pt)} & {H^{a-1-b+b\epsilon}_{Br,C_2}(pt)} & {H^{a-b-\sigma+b\epsilon}_{Br,K}(pt)} & {H^{a-b+b\epsilon}_{Br,C_2}(pt)} & {H^{a-b+b\epsilon}_{Br,C_2}(pt)}
	\arrow[from=1-1, to=1-2]
	\arrow["\simeq", from=1-1, to=2-1]
	\arrow[from=1-2, to=1-3]
	\arrow["\simeq", from=1-2, to=2-2]
	\arrow[from=1-3, to=1-4]
	\arrow[from=1-3, to=2-3]
	\arrow[from=1-4, to=1-5]
	\arrow["\simeq",from=1-4, to=2-4]
	\arrow["\simeq",from=1-5, to=2-5]
	\arrow[from=2-1, to=2-2]
	\arrow[from=2-2, to=2-3]
	\arrow[from=2-3, to=2-4]
	\arrow[from=2-4, to=2-5]
\end{tikzcd}
and inductively we obtain the isomorphism on all bidegrees $(a-p\sigma,b)$, $p\geq 0$.
\end{proof}

   The $RO(C_2)$-graded Bredon cohomology groups of a point are shown in the diagram below. We notice that both cones of the $RO(C_2)$-Bredon cohomology of a point in Figure \ref{TEST} are parts of the $RO(C_2)$-graded Bredon cohomology of $EC_2$ and $\tilde{E}C_2$ (see for example \cite{HOV2}).
\begin{figure}[H]
	\begin{tikzpicture}[scale=0.7]
		\draw[->, thin, gray!40] (-6.0,0) -- (6.3,0) node[right, black] {$a$};
		\draw[->, thin,  gray!40] (0,-6.0) -- (0,6.3) node[above, black] {$p\cd\sigma$};
		\path[style = myfill1] (0,0) -- (-6,6) -- (0,6) -- (0,0);
		\path[style = myfill3] (1,-1) -- (1,-6) -- (6,-6) -- (1,-1);
		\draw[ultra thick] (0,0) -- (-6,6);
		\draw[ultra thick] (0,0) -- (0,6);
		\draw[ultra thick] (0,0) -- (6,-6);
		\draw[ultra thick] (1,-1) -- (1,-6); 
		\node at (-3.5,4) [right] {$H^{a+p\sigma}_{Br}(EC_2)$};
		\node at (1.3,-4.8) [right] {$\rH^{a+p\sigma} _{Br}(\tilde{E}C_2)$};
		%\fill (0,1) circle(3pt) node[above right] {$\tau_\sigma$};
		\fill (0,0) circle(3pt) node[above right] {$1$};
		\fill (-1,1) circle(3pt) node[below left] {$\alpha$};
		\fill (1,-1) circle(3pt) node[above right] {$\theta$};
		
	\end{tikzpicture} \hfill 
	\caption{Regions of {$H^{*+*\sigma}_{Br}(\R,\Z/2)$} determined by (parts) of the $RO(C_2)-$ graded Bredon cohomology of  $EC_2$ and $\tilde{E}C_2$. The degrees of the displayed elements are $|\alpha|=-1+\sigma$, $|\theta|=2-2\sigma$. }  
	\label{TEST}
	\end{figure}
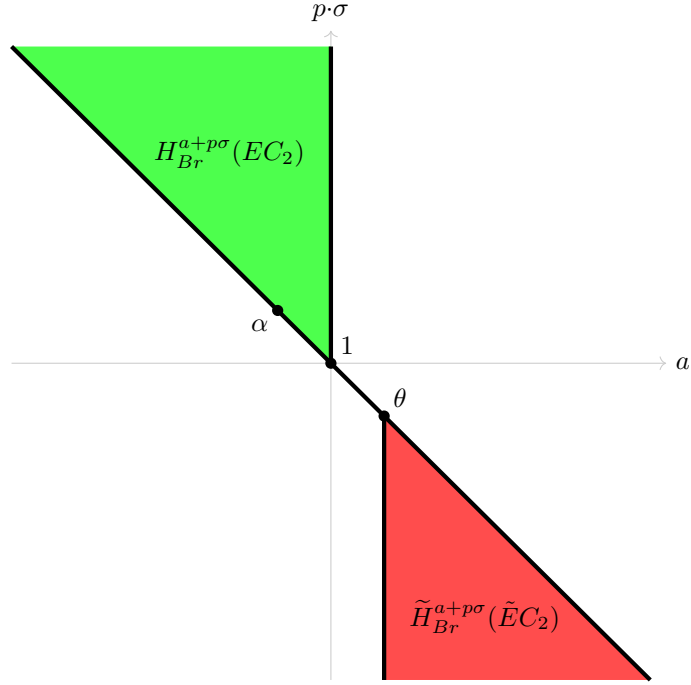

A point $(a,p)$ in Figure \ref{TEST} is given by a single $\Z/2$ vector space $H^{a+p\sigma}_{Br}(pt,\Z/2)$. The diagonal in Figure \ref{TEST} is the line $a+p=0$. The positive cone $R$ is the region $b+q\geq 0,b<0,q\geq 0$  and it is a subring that can be computed as $\Z/2[\sigma,\alpha]$ with $|\sigma|=\sigma$ and $|\alpha|=-1+\sigma$.
The negative cone $NC$ ($a+p\leq 0$, $a\geq 1$ in Figure \ref{TEST}) is computed as $\Z/2\{\frac{\theta}{\sigma^n\alpha^m}\}$ for $n,m\geq 0$ and $\theta$ an element divisible by $\alpha$ and $\sigma$ with the property that $\theta^2=\theta\alpha=\theta\sigma=0$ and $|\theta|=2-2\sigma$. Also, the cohomology class $\alpha$ of the $RO(C_2)$-graded Bredon cohomology ring of $EC_2$ is invertible.

A detailed explanation of the above diagram is given in \cite{HOV2}.

The $RO(C_2)$-graded Bredon cohomology ring of a point was computed by Stong (unpublished), but written accounts can be found in the Appendix of \cite{Car:op} and in Proposition 6.2 of \cite{dug:kr}. This can be described as:
\begin{theorem} \label{Br} We  have a $\MMt$-algebra isomorphism $$H^{*+*\sigma}_{Br}(pt,\Z/2)=R\oplus NC\simeq \Z/2[\sigma,\alpha]\oplus \Z/2\left\{\frac{\theta}{\sigma^n\alpha^m}\right\}$$
with $R$ a subring of $H^{*+*\sigma}_{Br}(pt,\Z/2)$ and $NC$ a $\MMt$-submodule with zero products. 
\end{theorem}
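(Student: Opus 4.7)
The plan is to exploit the topological isotropy separation cofiber sequence $EC_{2+}\to S^0\to \widetilde{E}C_2$ applied to the equivariant Eilenberg--MacLane spectrum $H\underline{\Z/2}$. The resulting long exact sequence should sort $H^{V}_{Br}(pt,\Z/2)$ (for $V=a+p\sigma$) into contributions from Borel cohomology (the positive cone $R$) and from the $\widetilde{E}C_2$ side (the negative cone $NC$), matching the picture in Figure~\ref{TEST}.

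First I would compute the two building blocks. The Borel cohomology $H^{*+*\sigma}_{Br}(EC_{2+},\underline{\Z/2})$ comes from $BC_2=\R P^\infty$ and, upgraded to the $RO(C_2)$-graded setting, is $\Z/2[\sigma,\alpha^{\pm 1}]$ with $|\sigma|=\sigma$, $|\alpha|=-1+\sigma$; the invertibility of $\alpha$ is the $(-1+\sigma)$-periodicity of Borel cohomology mentioned in the excerpt. Since $\widetilde{E}C_2\simeq\colim_n S^{n\sigma}$, the reduced cohomology $\widetilde{H}^{*+*\sigma}_{Br}(\widetilde{E}C_2,\underline{\Z/2})$ is computed as an appropriate $\sigma$-localization. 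The long exact sequence
\[
\cdots\to\widetilde{H}^V(\widetilde{E}C_2)\xrightarrow{\delta} H^V_{Br}(pt)\to H^V_{Br}(EC_{2+})\to\widetilde{H}^{V+1}(\widetilde{E}C_2)\to\cdots
\]
then shows that the image of $H^V_{Br}(pt)\to H^V_{Br}(EC_{2+})$ is exactly the subring $\Z/2[\sigma,\alpha]\subset\Z/2[\sigma,\alpha^{\pm 1}]$ of non-negative powers of $\alpha$ (only those extend across $S^0$), producing $R$ as a subring of $H^{*+*\sigma}_{Br}(pt,\Z/2)$. In the complementary bidegrees (fourth quadrant of Figure~\ref{TEST}), the Borel class vanishes, so exactness lifts classes in $H^V_{Br}(pt)$ from $\widetilde{H}^{V-1}(\widetilde{E}C_2)$ via $\delta$; this produces $\theta$ in bidegree $2-2\sigma$ together with the family $\theta/(\sigma^n\alpha^m)$, forming the $R$-module $NC$.

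Finally I would verify the multiplicative structure. Because every class in $NC$ is in the image of $\delta$, its image in Borel cohomology vanishes; two-step exactness combined with the ring structure on the long exact sequence then forces $\theta\cdot\alpha=\theta\cdot\sigma=0$ and $\theta^2=0$, so $NC$ is a square-zero ideal and $H^{*+*\sigma}_{Br}(pt,\Z/2)\cong R\oplus NC$ as claimed. The $R$-module structure on $NC$, with $\sigma$ and $\alpha$ acting as shifts annihilating at the boundary, is inherited from the module structure on $\widetilde{H}^{*+*\sigma}_{Br}(\widetilde{E}C_2,\underline{\Z/2})$. The hardest part is the careful tracking of $\delta$ across bidegrees: one must verify that $\delta$ is surjective onto precisely the span of the $\theta/(\sigma^n\alpha^m)$ and that no additional classes appear outside $R\oplus NC$, which requires a bidegree-by-bidegree comparison of kernels and images in the long exact sequence and a correct identification of $\widetilde{H}^*(\widetilde{E}C_2)$ with the $\sigma$-inverted Borel cohomology in the relevant range.
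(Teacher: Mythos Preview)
The paper does not prove this statement at all: it is quoted as a classical result due to Stong (unpublished), with the sentence immediately preceding the theorem pointing to the Appendix of \cite{Car:op} and Proposition~6.2 of \cite{dug:kr} for written accounts. So there is nothing in the paper to compare your argument against.

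That said, your isotropy-separation approach is the standard route to this computation and is essentially correct in outline. One small point: your justification for $\theta^2=0$ and the vanishing of products in $NC$ (``two-step exactness combined with the ring structure on the long exact sequence'') is vague as written. Knowing that every element of $NC$ dies in Borel cohomology does not by itself force products of such elements to vanish, since the product again lies in bidegrees where Borel cohomology is zero. The clean way is to first observe $\theta\alpha=0$ and $\theta\sigma=0$ by a direct bidegree check (the targets $H^{1-\sigma}_{Br}(pt)$ and $H^{2-\sigma}_{Br}(pt)$ lie in the gap between the two cones), and then deduce $\theta^2=0$ from $\theta\alpha=0$: if $\theta^2$ were the nonzero class $\theta/\alpha^2$ in degree $4-4\sigma$, multiplying by $\alpha^2$ would give $\theta\neq 0$ on one side and $0$ on the other. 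The general vanishing of products in $NC$ then follows similarly. With that adjustment your sketch is a valid proof.
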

\subsection{Our Results}

 One of the main results of this paper is the additive picture of the Bredon motivic cohomology groups of the real numbers. As one can notice in Figure \ref{TEST1}, apart from different Betti realizations in the region $b,q\geq 0$, $b+q\geq 0$ (and different computations for the graded $\Z/2$-vector spaces in the regions $b+q\geq 0$, $b<0$ and $b+q<0$, $b\geq 1$), the diagram is similar with the complex case computed in \cite{HOV2} and reviewed in Figure \ref{TEST0}. This shows that in both the complex and real cases the cohomology groups are only determined by parts of Bredon motivic cohomologies of $\EG C_2$ and $\wt \EG C_2$, along with the realization.
 
 The classical computations of the motivic cohomology of the complex numbers and real numbers (with $\Z/2$-coefficients) can be understood as belonging to the region $b,q\geq 0$, $b+q\geq 0$ (more precisely to the line $q=0$, $b\geq 0$), an area where all the Betti realizations are isomorphisms and to the segment $b<0$, $q=0$ where the Betti realizations can be monomorphisms that are not isomorphisms or isomorphisms between two zero groups (trivial isomorphisms). 
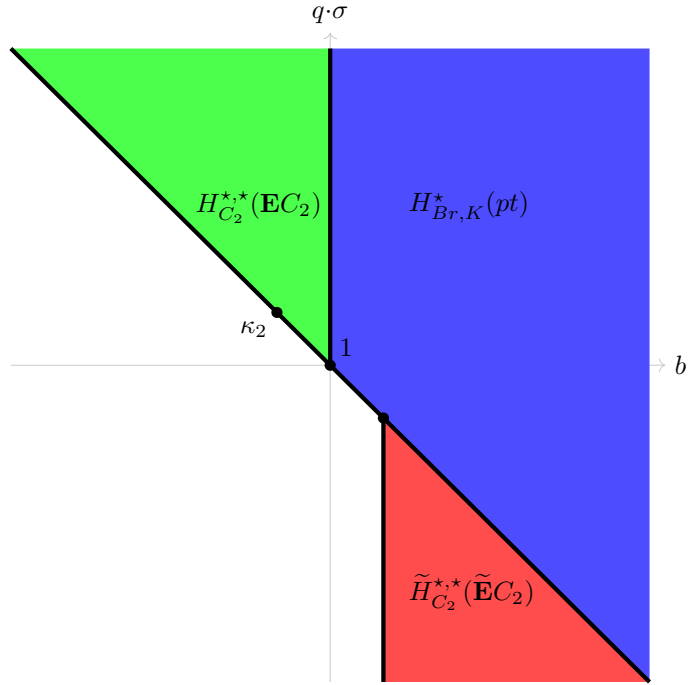
\begin{figure}[H]
	\begin{tikzpicture}[scale=0.7]
		\draw[->, thin, gray!40] (-6.0,0) -- (6.3,0) node[right, black] {$b$};
		\draw[->, thin,  gray!40] (0,-6.0) -- (0,6.3) node[above, black] {$q\cd\sigma$};
		\path[style = myfill1] (0,0) -- (-6,6) -- (0,6) -- (0,0);
		\path[style = myfill2] (0,0) -- (0,6) -- (6,6) -- (6,-6) -- (0,0);
		\path[style = myfill3] (1,-1) -- (1,-6) -- (6,-6) -- (1,-1);
		\draw[ultra thick] (0,0) -- (-6,6);
		\draw[ultra thick] (0,0) -- (0,6);
		\draw[ultra thick] (0,0) -- (6,-6);
		\draw[ultra thick] (1,-1) -- (1,-6); 
		\node at (-2.7,3) [right] {$H^{\star,\star}_{C_2}(\EG C_2)$};
		\node at (1.3,3) [right] {$H^{\st}_{\Br,K}(\pt)$};
		\node at (1.3,-4.3) [right] {$\rH^{\star,\star}_{C_2}(\wt\EG C_2)$};
		%\fill (0,1) circle(3pt) node[above right] {$\tau_\sigma$};
		\fill (0,0) circle(3pt) node[above right] {$1$};
		\fill (-1,1) circle(3pt) node[below left] {$\kappa_2$};
		\fill (1,-1) circle(3pt) node[above right] {};
		
	\end{tikzpicture} \hfill 
	\caption{Regions of {$H^{\star, b+q\sigma}_{C _2}(\R,\Z/2)$} determined by $\EG C_2$, Betti realization into the $RO(C_2\times \Sigma_2)$-graded Bredon cohomology of a point, and $\wt\EG C_2$. The degree of the displayed element is $|\kappa_2|=(-2+2\sigma, -1+\sigma)$. }  
	\label{TEST1}
	\end{figure}
	
	A point $(b,q)$ in the Figure \ref{TEST1} is given by the graded $\Z/2-$vector space $\oplus_{a,p\in \Z^2}H^{a+p\sigma,b+q\sigma}_{C_2}(\R,\Z/2)$. We can also view each point of the diagram as a $\MMt$-module.
	
	In the region $b+q<0$, $b\leq 0$ of Figure \ref{TEST1}, the Bredon motivic cohomology groups are zero. Multiplication by $\kappa_2$ gives the periodicity in the Bredon motivic cohomology of $\EG C _2$; therefore, the multiplication by $\kappa_2$ is an isomorphism in Figure \ref{TEST1} only in the region $b<0$, $b+q\geq 0$. The Bredon motivic cohomology of the real numbers is isomorphic with the Bredon motivic cohomology of $\EG C_2$ in the region $b<0$, $b+q\geq 0$, and it is isomorphic to the reduced Bredon motivic cohomology of $\wt\EG C_2$ in the region $b+q<0$, $b\geq 1$.  The region $b,q\geq 0$, $b+q\geq 0$ includes both the computation of the motivic cohomology of $\R$ of V.Voevodsky \cite{Voev:miln} and the results in codimension 0, 1 and $\sigma$ in the particular case $\R$ (or any real closed field), of the second author in \cite{V}.

Moreover, in Figure \ref{TEST1}, we prove that the realization maps in the region $q\geq 0$, $b<0$,  $b+q\geq 0$ and the region $b+q<0$, $b\geq 1$ can be identified with maps induced by the $C_2\times \Sigma_2$ topological isotropy sequence. By studying the $C_2\times \Sigma_2$ topological isotropy sequence in sections 2.3 and 3, we completely determine the realization maps in the above regions.

 Based on J.Holler and I.Kriz's computation from \cite{HK}, we also identify the value of the cohomology groups in Figure \ref{TEST1} above the line $b+q=0$. The value of the cohomology below the line $b+q=0$ can be determined from Voevodsky's computation of the motivic cohomology of $\BG C_2$ over reals (\cite{Voc}). Using Theorem \ref{ocomp} below (proved in Section 5), we can also compute these groups as $RO(C_2\times\Sigma_2)$-graded Bredon cohomology groups therefore reducing the computation to \cite{HK}. This unexpected link between Voevodsky's computation and $RO(G)$-graded Bredon cohomology is discussed in Remark \ref{repr} from Section 5 (for $G=C_2$ or $C_2\times\Sigma_2$); it is essentially a consequence of the fact that there is a lot of nontrivial information in the extra nontrivial representation indexes of Bredon motivic cohomology.

We prove the following theorem about the realization maps in Figure \ref{TEST1}:
\begin{theorem} The Betti realizations 
$$H^{a+p\sigma,b+q\sigma}_{C_2}(\R,\Z/2)\rightarrow H^{a-b+(p-q)\sigma+b\epsilon+q\sigma\otimes\epsilon}_{Br}(pt,\Z/2)$$
in the region $b+q\geq 0$, $q\geq 0$, $b<0$ are monomorphisms everywhere and isomorphisms if $a\leq 2b+2$ and the Betti realizations in the region $b,q\geq 0$, $b+q\geq 0$ are isomorphisms everywhere. The realization maps in the region $b+q<0$, $b\geq 1$ are monomorphisms everywhere.
\end{theorem}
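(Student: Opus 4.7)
The plan is to treat the three regions of Figure \ref{TEST1} separately, reducing the green and red cone statements to the blue region by diagram-chasing against the commutative squares induced by the cofiber sequence \eqref{eqn:cof2} and its Betti realization \eqref{eqn:cof4}, together with a careful analysis of the $C_2\times\Sigma_2$ topological isotropy sequence.

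For the blue region, the starting point is the case $q=0$, $b\geq 0$ of Proposition \ref{prop:ptiso}. To reach all $(b,q)$ with $b \geq 0$ and $b + q \geq 0$, apply the five lemma to the long exact sequences coming from the motivic cofiber sequence \eqref{eqn:cof1} and its Betti realization \eqref{eqn:cof3}. The cohomology of $C_{2+}$ in both settings reduces to non-equivariant motivic cohomology of $\Spec(\R)$, which Proposition \ref{classic} identifies with its topological counterpart; induction on $|q|$ then yields the isomorphism throughout the blue region.

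For the green cone, the cofiber sequence \eqref{eqn:cof2} combined with the vanishing of $\tilde H^{\star,\star}_{C_2}(\EGt C_2, \ul{\Z/2})$ in these bidegrees gives the isomorphism $H^{a+p\sigma,b+q\sigma}_{C_2}(\R,\ul{\Z/2}) \cong H^{a+p\sigma,b+q\sigma}_{C_2}(\EG C_2, \ul{\Z/2})$. Place the realization in the commutative square
\begin{equation*}
\begin{array}{ccc}
H^{a+p\sigma,b+q\sigma}_{C_2}(\R,\ul{\Z/2}) & \xrightarrow{\Re} & H^{\star}_{Br}(\pt,\ul{\Z/2}) \\
\big\downarrow & & \big\downarrow \\
H^{a+p\sigma,b+q\sigma}_{C_2}(\EG C_2, \ul{\Z/2}) & \xrightarrow{\Re} & H^{\star}_{Br}(E_{\Sigma_2}C_2, \ul{\Z/2})
\end{array}
\end{equation*}
whose verticals are induced by $\EG C_2 \to \R$ and its realization $E_{\Sigma_2}C_2 \to \pt$; the left vertical is the isomorphism just noted. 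Because $\kappa_2$ is a unit in the lower-left ring and Betti realization is multiplicative, $\Re(\kappa_2)$ is a unit on the lower right, so multiplication by $\kappa_2^b$ transports the lower realization from the green-cone bidegree $(a+p\sigma,b+q\sigma)$ to the boundary bidegree $(a-2b+(p+2b)\sigma,(q+b)\sigma)$ with $b'=0$ and $q' = q + b \geq 0$. At this boundary, Proposition \ref{prop:ptiso} makes the upper realization an isomorphism and the analysis of the topological isotropy sequence \eqref{eqn:cof4} shows that the right vertical is injective there, which forces the lower realization to be injective at the boundary; $\kappa_2$-periodicity then propagates injectivity to the entire green cone, and commutativity of the square together with the left vertical being an isomorphism yields injectivity of the upper realization.

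To upgrade injectivity to surjectivity in the range $a \leq 2b + 2$, it suffices to show that the right vertical $H^{\star}_{Br}(\pt,\ul{\Z/2}) \to H^{\star}_{Br}(E_{\Sigma_2}C_2,\ul{\Z/2})$ is an isomorphism in the relevant degree; by the long exact sequence attached to \eqref{eqn:cof4} this reduces to the vanishing of the connecting homomorphism into $\tilde H^{\star+1}_{Br}(\tilde E_{\Sigma_2}C_2,\ul{\Z/2})$, and the analysis in Sections 2.3 and 3 pins down the precise range $a \leq 2b+2$ where this vanishing holds. The red cone statement is the mirror of the green one, using the isomorphism $H^{\star,\star}_{C_2}(\R, \ul{\Z/2}) \cong \tilde H^{\star,\star}_{C_2}(\EGt C_2, \ul{\Z/2})$ and the periodicity from $\EGt C_2 = \colim_n S^{2n\sigma, n\sigma}$, whose Betti realization again provides a unit on the topological side. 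The main obstacle throughout is the delicate control of the realization maps along the $C_2\times\Sigma_2$ topological isotropy sequence in the full $RO(C_2\times\Sigma_2)$-graded Bredon cohomology of Holler-Kriz \cite{HK}; this is the principal new input of Sections 2 and 3, and where the precise bound $a \leq 2b + 2$ originates.
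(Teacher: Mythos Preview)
Your blue-region argument has a genuine gap. The cofiber sequence \eqref{eqn:cof1}, $C_{2+}\to S^0\to S^\sigma$, shifts the $\sigma$-component of the \emph{dimension} index (i.e.\ $p$), not of the \emph{weight} index $q$: the long exact sequence it produces relates $H^{a+p\sigma,b+q\sigma}_{C_2}(\R)$ to $H^{a+(p\pm 1)\sigma,b+q\sigma}_{C_2}(\R)$ and to the non-equivariant group $H^{a+p,b+q}(\R)$. So starting from the case $q=0$ of Proposition~\ref{prop:ptiso} and ``inducting on $|q|$'' via \eqref{eqn:cof1} is not possible; there is no analogue of \eqref{eqn:cof1} that moves $q$ while collapsing to non-equivariant cohomology. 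The paper's route (Theorems~\ref{2q} and~\ref{posisom}) is genuinely different: it first establishes the special slice $p=2q$ by a direct computation --- for $q\geq 0$ via the decomposition of the equivariant cycle complex $C_*z(b+q\sigma)^{\Z/2}$ into ordinary motivic complexes, and for $q<0$ via the geometric cofiber sequence $W_{-q+}\to S^0\to T^{-q\sigma}$ together with the explicit description of $H^{*,*}_{C_2}(W_{-q})$ --- and only \emph{then} inducts on $p$ using \eqref{eqn:cof1}.

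This gap propagates to your green-cone argument: after the $\kappa_2^b$-shift you land at $b'=0$, $q'=b+q\geq 0$, and you invoke Proposition~\ref{prop:ptiso} there. But Proposition~\ref{prop:ptiso} only treats the case $q'=0$; what is actually needed at that boundary is the full blue-region isomorphism (Theorem~\ref{posisom}), which you have not established. Your outline for the green cone (periodicity plus injectivity of $H^\st_{Br}(\pt)\to H^\st_{Br}(E_{\Sigma_2}C_2)$ from Theorem~\ref{isov}) is otherwise the same as the paper's. For the red cone, calling it ``the mirror of the green one'' understates the work: the injectivity there is Theorem~\ref{ncinj}, whose proof requires the explicit ring structure of $H^\st_{Br,K}(E_{\Sigma_2}C_2)$ and the gap lemma (Lemma~\ref{gap}) for $B_{\Sigma_2}C_2$, not merely periodicity.
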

Thus, we completely compute the Borel motivic cohomology ring of the real numbers as a proper subring of the $RO(C_2\times \Sigma_2)$-graded Bredon cohomology of $E_{\Sigma_2}{C_2}$ (which we compute in \cite{DV} and mention in section 3). The method for computing Borel motivic cohomology of the complex numbers in \cite{HOV2} cannot be generalized to our case, but we found a different method that computes Borel cohomology of both the real numbers and the complex numbers, therefore also reproving in a simpler and more conceptual way the results of \cite{HOV2}. One of the main difficulties in the case of the real numbers as opposed to the case of the complex numbers is that the $\Z/2$-vector space dimensions of the $RO(C_2\times \Sigma_2)$-graded Bredon cohomology groups of a point (with $\Z/2$-coefficients) are usually higher than 1. 

We prove the following theorem in Section 5:
\begin{theorem} \label{ocomp} We have the following ring isomorphism  $$H^{\st,\st}_{C_2}(\EG C _2,\Z/2)\simeq \MMt[x _3,y _3,\kappa_2^{\pm 1}]\hookrightarrow H^{*+*\sigma+*\epsilon+*\sigma\otimes\epsilon}_{Br}(E_{\Sigma_2}C_2,\Z/2),$$ 
with $x_3$ in bidegree $(\sigma,\sigma)$, $y_3$ in bidegree $(\sigma-1,\sigma)$ and $\kappa_2$, the invertible element, in bidegree $(2\sigma-2,\sigma-1)$.
\end{theorem}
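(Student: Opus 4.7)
The plan is to construct the three generators explicitly, build a ring map $\phi:\MMt[x_3,y_3,\kappa_2^{\pm 1}]\to H^{\star,\star}_{C_2}(\EG C_2,\Z/2)$, prove it is an isomorphism, and then deduce the embedding into the $RO(C_2\times\Sigma_2)$-graded Bredon cohomology of $E_{\Sigma_2}C_2$ from the realization theorem stated just above this one.

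I begin with the generators. The invertible periodicity class $\kappa_2\in H^{2\sigma-2,\sigma-1}_{C_2}(\EG C_2,\Z/2)$ is already recalled in the introduction. To produce $x_3\in H^{\sigma,\sigma}_{C_2}(\EG C_2,\Z/2)$ and $y_3\in H^{\sigma-1,\sigma}_{C_2}(\EG C_2,\Z/2)$ I use the geometric model $\EG C_2\simeq\colim_n\A(n\sigma)\setminus\{0\}$ together with the cofiber sequences
$$(\A(n\sigma)\setminus\{0\})_+\to S^0\to T^{n\sigma}\simeq S^{2n\sigma,n\sigma}$$
which in the colimit assemble into $\EG C_{2+}\to S^0\to\wt\EG C_2$. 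The connecting maps of the associated long exact sequence, together with Voevodsky's motivic generators $x_2,y_2$ from Proposition~\ref{classic} and the explicit form of $\MMt$ given in Theorem~\ref{Br}, pick out specific lifts in the bidegrees $(\sigma,\sigma)$ and $(\sigma-1,\sigma)$ that play the role of ``$\sigma$-twists'' of $x_2$ and $y_2$. As a sanity check on the choice, I would verify that their Betti realizations hit the expected generators of $H^{\sigma\otimes\epsilon}_{Br}(E_{\Sigma_2}C_2,\Z/2)$ and $H^{-1+\sigma\otimes\epsilon}_{Br}(E_{\Sigma_2}C_2,\Z/2)$, using Proposition~\ref{prop:ptiso} to compare weight zero groups.

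For bijectivity of $\phi$ I use the free-action isomorphism $H^{a,b}_{C_2}(\EG C_2,\Z/2)\cong H^{a,b}(\BG C_2,\Z/2)$ in integer bidegrees together with Voevodsky's computation of $H^{\star,\star}(\BG C_2,\Z/2)$ over $\R$; the periodicity by $\kappa_2^{\pm 1}$ and the $\MMt$-module structure then extend the computation to every $RO(C_2)$-graded bidegree, and each monomial in $x_3,y_3,\kappa_2^{\pm 1}$ with coefficients in $\MMt$ lines up with a nonzero generator, giving surjectivity. Injectivity amounts to showing that no polynomial relation holds among $x_3,y_3$ over $\MMt$, which follows from a dimension count in each bidegree against the additive description of the green cone in Figure~\ref{TEST1}.

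Finally, the embedding into $H^{*+*\sigma+*\epsilon+*\sigma\otimes\epsilon}_{Br}(E_{\Sigma_2}C_2,\Z/2)$ follows from Betti realization: since $\Re(\EG C_2)\simeq E_{\Sigma_2}C_2$, realization induces a ring map on cohomology, and by the theorem on realization maps stated immediately before this one the map is a monomorphism on the green cone, which is precisely where the Borel motivic cohomology is supported. The main obstacle I expect is the absence of polynomial relations between $x_3$ and $y_3$ over $\MMt$: surjectivity flows cleanly from Voevodsky's $\BG C_2$-computation plus periodicity, but ruling out hidden relations is delicate because $\MMt$ itself is not a domain. The cleanest route is to compute $\Re(x_3),\Re(y_3),\Re(\kappa_2)$ inside the $RO(C_2\times\Sigma_2)$-graded cohomology of $E_{\Sigma_2}C_2$ from the companion paper~\cite{DV}, where these three classes are manifestly algebraically independent over the image of $\MMt$; monicity of the realization on the green cone then pulls the polynomial freeness back to the source.
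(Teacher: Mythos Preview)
Your overall shape is reasonable, but there is a real gap in the surjectivity step, and the route you take diverges from the paper's in a way that makes your argument incomplete.

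You reduce by $\kappa_2$-periodicity to integer weight and then invoke Voevodsky's computation of $H^{*,*}(\BG C_2,\Z/2)$. The problem is that $\kappa_2^{-q}$ only kills the $\sigma$-part of the \emph{weight}; the first index becomes $a+2q+(p-2q)\sigma$, so you are left with $H^{a'+p'\sigma,\,w}_{C_2}(\EG C_2)$ for arbitrary $p'\in\Z$ and integer $w\geq 0$. Voevodsky's result only tells you the case $p'=0$. Your sentence ``the $\MMt$-module structure then extends the computation to every $RO(C_2)$-graded bidegree'' is exactly the assertion to be proved: knowing that $\MMt$ acts does not, by itself, determine the $\MMt$-module structure. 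Without an additional input (for instance the cofiber sequence $\EG C_{2+}\wedge C_{2+}\to\EG C_{2+}\to\EG C_{2+}\wedge S^\sigma$ together with a five-lemma induction on $p'$), surjectivity is not established.

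The paper avoids this by making a different periodicity move: it multiplies by $\kappa_2^{b}$ to reach weight $(b+q)\sigma$ with \emph{zero integer part} rather than integer weight. In that range $\rH^{\star,q'\sigma}_{C_2}(\wt\EG C_2)=0$ (by $(0,\sigma)$-periodicity and vanishing at weight $0$), so the isotropy sequence identifies $H^{\star,(b+q)\sigma}_{C_2}(\EG C_2)$ with $H^{\star,(b+q)\sigma}_{C_2}(\R)$, and Proposition~\ref{posisom} then identifies this with the topological $H^{a'+p'\sigma+0\cdot\epsilon+(b+q)\sigma\otimes\epsilon}_{\Br}(\pt)$. This is the key Theorem~\ref{Borel}/Figure~\ref{TEST4} reduction that your proposal is missing. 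From here the paper does exactly a dimension count, but against the Holler--Kriz Poincar\'e series of Proposition~\ref{PC}: in Case~1 ($p',q'\geq 0$) every class is a monomial $x_1^{n_1}y_1^{m_1}x_3^{n_3}y_3^{m_3}$, and in Case~2 ($p'<0$, $q'\geq 0$) every class is $\tfrac{\theta_1}{x_1^{n_1}y_1^{m_1}}x_3^{n_3}y_3^{m_3}$, so the ring is literally $\MMt[x_3,y_3]$ before reinverting $\kappa_2$. This simultaneously gives surjectivity and injectivity, without needing to pull back polynomial freeness from $H^\star_{\Br}(E_{\Sigma_2}C_2)$.

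Two smaller points. First, your construction of $x_3,y_3$ via connecting maps is unnecessary: in the paper they are simply the unique nonzero elements of $H^{\sigma,\sigma}_{C_2}(\R)\cong H^{\sigma-1,\sigma}_{C_2}(\R)\cong\Z/2$, identified through realization. Second, your proposed injectivity-by-realization route relies on knowing the ring $H^\star_{\Br}(E_{\Sigma_2}C_2)$ independently (from \cite{DV}); this is fine as an alternative, but note that the paper's own argument does not need that input.
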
 
The realization maps $$H^{a+p\sigma,b+q\sigma}_{C_2}(\EG C _2,\Z/2)\rightarrow H^{a-b+(p-q)\sigma+b\epsilon+q\sigma\otimes\epsilon} _{Br}(E_{\Sigma_2}C_2,\Z/2)$$ are isomorphisms if $a\leq 2b+2$, and injectives, but not surjectives if $a\geq 2b+3$ (see Corollary \ref{EG}). We explain in Remark \ref{Vcomp} and Remark \ref{repr} how V.Voevodsky's computation of the motivic cohomology of $\BG C_2$ \cite{Voc} can be embedded, for our choices of a field, in the Borel motivic cohomology ring of a point.

 We notice that the multiplication by the generator $\tau_{\sigma}\in H^{0,\sigma}_{C_2}({\EG C_2} _{\C},\Z/2)$ gives isomorphism over the complex numbers, and is therefore important in the computation of Borel motivic cohomology ring of complex numbers in \cite{HOV2}. However, in the real case this cohomology class is zero because $H^{0,\sigma}_{C_2}({\EG C_2} _{\R},\Z/2)=0$.

Using the above results we give a description of the Bredon motivic cohomology ring of the real numbers as a subring in the $RO(C_2\times \Sigma_2)$-graded Bredon cohomology ring of a point. We consider $$R=\oplus_{b\geq 0,b+q\geq 0}H^{\st,b+q\sigma} _{C_2}(\R,\Z/2)$$ the subring in $H^{\st,\st}_{C_2}(\R,\Z/2)$ given by the direct sum of the $\MMt$-modules in the region $b,q\geq 0$, $b+q\geq 0$ of Figure \ref{TEST1} (which can be considered as a subring of the $RO(C_2\times \Sigma_2)$-graded Bredon cohomology of a point, and is discussed in section 6) and $$NC=\oplus _{b\geq 0, b+q<0}H^{\st,b+q\sigma} _{C_2}(\R,\Z/2),$$ the $\MMt$-submodule in $H^{\st,\st}_{C_2}(\R,\Z/2)$  given by the  direct sum of the $\MMt$-modules in the region $b+q<0$, $b\geq 1$ of Figure \ref{TEST1}.  I recall that we have an identification $\MMt=H^*_{Br,C_2}(pt,\Z/2)\simeq H^{*,0}_{C_2}(\R,\Z/2)$ and the $\MMt$-module structures of $R$ (weight $b+q\geq 0$, $b\geq 0$) and $NC$ (weight $b+q<0$, $b\geq 1$) are given by the fact that multiplication by $\MMt$ (via the identification) doesn't change the weight; it changes only the cohomological degree.

In terms of generators (see section 6) we write
$$NC\simeq \oplus _{a\leq 2b+1} x^{-b-1}_3\rH^{a+b\epsilon}_{Br,C_2}(\Sigma B_{\Sigma_2}C_2,\Z/2)[x^{\pm 1}_1,x^{-1}_3]=$$
 $$=\{x^{-n-m-p-2}_3x^n_2y^m_2\Sigma (b^pc),x^{-n-m-p-1}_3x^n_2y^m_2\Sigma (b^{p+1})\}[x^{\pm 1}_1,x^{-1}_3].$$
Here $x_3$ is in bidegree $(\sigma,\sigma)$, $x_2$ is in bidegree $(1,1)$, $y_2$ is in bidegree $(0,1)$,  $\kappa_2$, the invertible element, in bidegree $(2\sigma-2,\sigma-1)$ and $\Sigma b^{p+1}$ is in bidegree $(p+2)+(p+1)\epsilon$ and  $\Sigma b^pc$ is in bidegree $(p+1)+(p+1)\epsilon$.
Because of the description of $NC$ in terms of generators we can identify its $\MMt$-module structure in terms of these generators. This is given by $$\frac{\theta_1}{x_1^{n_1}y_1^{m_1}}\alpha=0;$$ and $$y_1\frac{x^n_2y^m_2\Sigma (b^pc)}{x^{n+m+p+2}_3}=x_1\frac{x^{n+1}_2y_2^m\Sigma cb^{p-1}}{x_3^{n+m+p+2}}+x_1\frac{y_2^{m+1}x_2^n\Sigma b^p}{x_3^{n+m+p+2}},p\geq 1;$$ and 
$$y_1\frac{x^n_2y^m_2\Sigma b^p}{x^{n+m+p+1}_3}=x_1\frac{x_2^{n}y_2^{m}\Sigma cb^{p-1}}{x^{n+m+p+1}_3},p\geq 1;$$ and
$$y_1\frac{x^n_2y^m_2\Sigma c}{x^{n+m+1}_3}=0.$$

The Bredon motivic cohomology of the real numbers is described below in terms of these objects:

\begin{theorem} We have a decomposition of  $\MMt$-modules
$$H^{\st,\st} _{C_2}(\R,\Z/2)=(R,\kappa_2)\oplus NC$$
with $\kappa_2$ in degree $(2\sigma-2,\sigma-1)$ with the realization map making $(R,\kappa_2)\hookrightarrow H^\st_{Br,K}(pt,\Z/2)$ a subring of the $RO(C_2\times \Sigma_2)$-graded Bredon cohomology of a point, generated by $R$ and the cohomology class $\kappa_2$ and with $NC$ a $\MMt-$submodule of $H^\st_{Br,K}(pt,\Z/2)$  having zero products. 

The realization map is a monomorphism making the Bredon motivic cohomology ring of the real numbers a nontrivial proper subring of the $RO(C_2\times \Sigma_2)$-graded Bredon cohomology ring of a point. 

 In conclusion we have the following commutative diagram of commutative rings
\[\begin{tikzcd}
	{H^{*,*}(\R,\Z/2)} & {H^{\st} _{Br,C_2}(pt,\Z/2)} \\
	{H^{\star,\star} _{C_2}(\R,\Z/2)} & {H^{\st} _{Br,K}(pt,\Z/2)}
	\arrow[tail, from=1-1, to=1-2]
	\arrow[tail, from=1-2, to=2-2]
	\arrow[tail, from=1-1, to=2-1]
	\arrow[tail, from=2-1, to=2-2]
\end{tikzcd}\]
The horizontal maps are given by the Betti realizations and the vertical maps by the canonical inclusions.
\end{theorem}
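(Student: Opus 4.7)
The plan is to assemble this theorem from the regional decomposition displayed in Figure \ref{TEST1}, organized by the realization of the isotropy cofiber sequence \eqref{eqn:cof2}. First I would apply $H^{\star,\star}_{C_2}(-,\Z/2)$ to \eqref{eqn:cof2} to obtain a long exact sequence relating $H^{\star,\star}_{C_2}(\R,\Z/2)$, $H^{\star,\star}_{C_2}(\EG C_2,\Z/2)$ and $\tilde H^{\star,\star}_{C_2}(\EGt C_2,\Z/2)$. Using the known vanishing zones of these two pieces (Theorem \ref{ocomp} for the $\EG C_2$ side, together with the symmetric description for $\EGt C_2$), the long exact sequence pins down $H^{\star,\star}_{C_2}(\R,\Z/2)$ region by region: in the green cone it matches the relevant part of $H^{\star,\star}_{C_2}(\EG C_2)$, in the red cone it matches $\tilde H^{\star,\star}_{C_2}(\EGt C_2)$, and in the blue region it is captured by the $\sigma$-weighted extension of Voevodsky's motivic cohomology established in Section 4.

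Next I would assemble the subring $(R,\kappa_2)$. The piece $R$ is closed under multiplication because $b_1+b_2\geq 0$ and $(b_1+q_1)+(b_2+q_2)\geq 0$ hold whenever both summands do, and by Proposition \ref{prop:ptiso} together with its Section 4 extension the realization identifies $R$ with a subring of the $RO(C_2\times\Sigma_2)$-graded Bredon cohomology of a point. I would then lift $\kappa_2$ to $H^{2\sigma-2,\sigma-1}_{C_2}(\R,\Z/2)$ via the isotropy sequence (its image in $H^{\star,\star}_{C_2}(\EG C_2)$ being the periodicity element of Theorem \ref{ocomp}) and verify that adjoining $\kappa_2$ to $R$ sweeps out exactly the union of the blue region and the green cone; since $|\kappa_2|=(2\sigma-2,\sigma-1)$ shifts the weight $(b,q)$ to $(b-1,q+1)$, Theorem \ref{ocomp} then yields the full multiplicative structure of $(R,\kappa_2)$ and embeds it inside the $RO(C_2\times\Sigma_2)$-graded Bredon cohomology ring of a point.

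For the red-cone piece $NC$, its elements come from $\tilde H^{\star,\star}_{C_2}(\EGt C_2)$ through the isotropy sequence, so the $\MMt$-module structure on $NC$ is inherited from that sequence. The vanishing $NC\cdot NC=0$ reduces, via the ring map $H^{\star,\star}_{C_2}(\R,\Z/2)\to \tilde H^{\star,\star}_{C_2}(\EGt C_2,\Z/2)$, to the observation that a product of two classes in bidegrees with $b_i\geq 0$ and $b_i+q_i<0$ lands in a region where the target vanishes, mirroring the classical $NC\cdot NC=0$ of Theorem \ref{Br}. Monomorphism and properness of the realization follow directly from the preceding theorem, which establishes injectivity in the green cone, red cone and blue region, and failure of surjectivity on the green cone when $a\geq 2b+3$. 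The final commutative square comes from naturality of realization combined with the inclusion $\SH(\R)\hookrightarrow \SH^{C_2}(\R)$, which induces $H^{*,*}(\R,\Z/2)\hookrightarrow H^{\star,\star}_{C_2}(\R,\Z/2)$ on trivial-action bidegrees.

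The hardest part is likely pinning down the precise subring generated by $R$ and $\kappa_2$ inside the $RO(C_2\times\Sigma_2)$-graded Bredon cohomology of a point and confirming that no unexpected relations appear beyond those forced by Theorem \ref{ocomp}, together with verifying the zero-product condition on $NC$ uniformly. Both require careful bidegree bookkeeping using the regions of Figure \ref{TEST1} and the known vanishing zones of $\EG C_2$ and $\EGt C_2$ Bredon motivic cohomologies; it is here that the complexity of the real case (versus the complex case of \cite{HOV2}) is most visible, since the higher-dimensional $RO(C_2\times\Sigma_2)$-graded cohomology of a point can have $\Z/2$-vector space dimension greater than one and does not collapse into a single generator per bidegree.
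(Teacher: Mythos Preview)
Your overall architecture matches the paper's: decompose $H^{\star,\star}_{C_2}(\R,\Z/2)$ into the blue region $R$, the green-cone module $M$, and the red-cone module $NC$; identify $R\oplus M$ with the subring $(R,\kappa_2)$ via Theorem~\ref{posisom}, Corollary~\ref{EG} and Corollary~\ref{aplic}; and use the injectivity results of Sections~3--5 to get the global monomorphism into $H^\st_{Br,K}(\pt,\Z/2)$.

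There is, however, a genuine gap in your argument for $NC\cdot NC=0$. You claim this follows because a product of two red-cone classes ``lands in a region where the target vanishes''. But it does not: if $b_i\geq 0$ and $b_i+q_i<0$ for $i=1,2$, then $b_1+b_2\geq 0$ and $(b_1+b_2)+(q_1+q_2)<0$, so the product lands squarely back in the red cone, where neither $H^{\star,\star}_{C_2}(\R)$ nor $\tilde H^{\star,\star}_{C_2}(\EGt C_2)$ vanishes. The analogy with Theorem~\ref{Br} is not a bidegree argument either; there the vanishing comes from $\theta^2=0$, which is a structural relation, not a degree count.

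The paper supplies the missing mechanism: it passes to the realization and uses that $\tilde E_{\Sigma_2}C_2/C_2 \simeq \Sigma B_{\Sigma_2}C_2$ is a \emph{suspension}, so the (non-unital) ring $\rH^{*+*\epsilon}_{Br}(\tilde E_{\Sigma_2}C_2/C_2)$ has trivial multiplication. Combined with the $(\sigma,\sigma\otimes\epsilon)$-periodicity, this gives $\rH^\st_{Br}(\tilde E_{\Sigma_2}C_2)\simeq \rH^{*+*\epsilon}_{Br}(\Sigma B_{\Sigma_2}C_2)[x_1^{\pm 1},x_3^{\pm 1}]$ with zero products, and since $NC$ injects multiplicatively into this ring (via Proposition~\ref{tilde} and the realization isomorphism of Proposition~\ref{x}), one concludes $NC\cdot NC=0$. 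Your sketch needs this suspension step; without it the zero-product claim is unsupported.
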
 
The motivic cohomology ring of the real numbers was computed by V.Voevodsky as the positive cone of the $RO(C_2)$-graded Bredon cohomology of a point and it is a subring of $R$. Voevodsky's monomorphism is represented by the upper horizontal Betti realization in the above diagram.  

The description of the subring $H^{\st,\st} _{C_2}(\R,\Z/2)$ in terms of generators and relations depends on the description of the ring $H^\st_{Br,K}(pt,\Z/2)$ in terms of the generators and relations. This was started in \cite{BH} and finalized in \cite{DV}. In \cite{DV} there is a more detailed description of the subring $H^{\st,\st} _{C_2}(\R,\Z/2)$ in terms of generators and relations.

We can conclude that Bredon and Borel motivic cohomology groups of real numbers are completely determined by the values of $RO(C_2\times\Sigma_2)$ graded Bredon cohomology groups of a point. In the same way, we can conclude that Bredon and Borel motivic cohomology groups of complex numbers  are completely determined by the values of $RO(C_2)$ graded Bredon cohomology groups of a point. 

At the end of the paper, we show that all the computations and results of this paper are valid if we replace $\R$ by an arbitrary real closed field.

A brief outline of the paper is as follows. Sections 1 and 2 are devoted to the introduction and preliminaries. The main computations of the Bredon motivic cohomology of real numbers are carried out in Sections 3, 4, 5 and 6.
We study the realization maps of  Figure \ref{TEST1} by dividing the discussion on three ranges of weight:
\begin{itemize}
\item $b+q<0$,
\item $b\geq 0$, $b+q\geq 0$,
\item $b<0$, $b+q\geq 0$
\end{itemize}
We prove in section 3 and subsection 4.1. that the realization maps in the first range above are monomorphisms, but not necessary isomorphisms. We prove in section 4 that the realization maps in the second range above are isomorphisms. In section 5 we discuss the Bredon motivic cohomology of $\EG C_2$ over the real numbers, which is usually called the Borel motivic cohomology of real numbers and as an application we conclude that the realization maps in the third range above are monomorphisms, but not necessary isomorphisms. In section 6, based on the previous sections, we prove and conclude properties of the Bredon motivic cohomology  ring of real numbers. In subsection 6.1. we discuss the general case of a real closed field.\\
  
  {\bf{Notation.}} 
  \begin{itemize}[label = {$\cdot$}]
 \item We write $X_k$ for a smooth scheme over $k$.
 \item $K:=C _2\times \Sigma _2$ is the Klein four-group. We denote by $\Sigma_2$ the second copy of $C_2$.
\item $H^{a+p\sigma,b+q\sigma} _{C _2}(X,A)$ is the Bredon motivic cohomology of a $C _2$-smooth scheme $X$, with coefficients in an abelian group $A$.  All cohomologies that appear in this paper are understood to be with $\Z/2$ coefficients.
\item $H^{n,q}(X,A)$ is the motivic cohomology of a smooth scheme $X$ with coefficients in an abelian group $A$. We only consider the case where $A=\Z/2$.
\item $H^{a+p\sigma}_{\Br,C_2}(X,A)$ is the Bredon cohomology of a $C_2$-topological space $X$ with coefficients in the constant Mackey functor $\underline{A}$ (and generated by the classes $x_1,y_1,\theta_1$). If instead of $\sigma$ we write $\epsilon,$ we mean the same cohomology generated by $x_2,y_2, \theta_2$ (viewed as being embedded in the $RO(C_2\times \Sigma_2)$-graded Bredon cohomology of a point). We only consider the case where $A=\Z/2$. We write $H^{a+p\sigma}_{\Br,C_2}(X,A)$ for the Bredon cohomology associated to the $C_2$-(virtual) representation $a+p\sigma$ rather than $a-p+p\sigma$ (several papers in the equivariant homotopy field use this second notation). Our chosen notation is consistent with the notation of $\cite{HOV1}$, $\cite{HOV2}$ and $\cite{Car:op}$.
\item $H^{a+p\sigma+b\epsilon+q\sigma\otimes\epsilon}_{\Br,K}(X,A)$ is the Bredon cohomology of a $C_2\times \Sigma_2$-topological space $X$ with coefficients in the constant Mackey functor $\underline{A}$. We only consider the case where $\underline{A}=\underline{\Z/2}$. Sometimes we write $H^{\st}_{\Br,K}(X,A)$ for the same cohomology, where $K$ is the Klein four-group.
\item We have the convention that $\st$ denotes a $RO(G)$-grading, while $*$ denotes an integer grading. 

For example, $H^{\star,\star} _{C _2}(X)=\oplus_{a,b,p,q}  H^{a+p\sigma,b+q\sigma} _{C _2}(X)$ and  $H^{*,*}(X)=\oplus_{a,b} H^{a,b}(X)$.  
\item We write $\ul{H}^{Br}_{V}$, $\ul{H}^V_{Br}$ for the usual Mackey functors associated to $H^{Br}_{V}$ or $H^V_{Br}$, with $V$ a $G-$representation.
  \item  $S^V$ is the topological sphere associated to the $C_2\times \Sigma_2$ representation $V$.  For example, $V$ can be $\sigma$, $\epsilon$ or $\sigma\otimes\epsilon$.
\item  All $C_2$-varieties are over $\R$, and we view $C_2$ as the group scheme $C_2=\spec(\R)\sqcup \spec(\R)$.
 \item $\MMtn:=H^{*+*\sigma}_{\Br}(\pt, \Z/n)$. 
 \item We denote by $\kappa_2$ the invertible element in the Bredon motivic cohomology of $\EG C_2$  (of degree $(2\sigma-2,\sigma-1)$) as well as its Betti realization in the $RO(C_2\times \Sigma_2)$-graded Bredon cohomology of $E_{\Sigma_2}C_2$, where the degree is $-1+\sigma-\epsilon+\sigma\otimes\epsilon$ (this cohomology class gives the $C_2\times \Sigma_2$ Bredon cohomology of $E_{\Sigma_2}C_2$ its unique periodicity). 
 \newpage
 \item If the $b^nc$ and $b^n$ are cohomology classes in $\rH^{*+*\epsilon}_{Br,C_2}(B_{\Sigma_2}C_2)$, then we denote by $\Sigma b^nc$ and $\Sigma b^n$ their corresponding cohomology classes in the cohomology of the suspension $\rH^{*+1+*\epsilon}_{Br,C_2}(\Sigma^1B_{\Sigma_2}C_{2}).$
  \end{itemize}
   {\bf{Acknowledgement.}} The second author would like to thank  J. Heller and P.A.{\O}stv{\ae}r for useful discussions. The authors thank to the anonymous referee for many useful comments that improved the paper.
   
\section{$RO(C_2\times \Sigma _2)$-graded Bredon cohomology of a point}
\subsection{$RO(C_2\times \Sigma _2)$-graded Bredon cohomology groups of a point}\label{rok}
The $RO(C _2\times \Sigma _2)$-graded Bredon cohomology groups  of a point with $\Z/2$-coefficients were computed in \cite{KHo}. The results were given in terms of Poincare series of graded vector spaces and reproduced also in \cite{GY} (however see the modification in Proposition \ref{caseMC1} from \cite{GY}). We have the following:
\begin{proposition} (\cite{KHo})\label{PC} Let $l,n\geq 0$ and $i,j\geq 0$. Let $\alpha$ and $\beta$ two distinct nontrivial 1-dimensional irreducible $C_2\times \Sigma_2$-representations. The Poincare series for $H^{-*+V}_{Br,K}(pt,\Z/2)$ is

a) If $V=0$ then $1$.

b) If $V=n\alpha$ then $1+x+x^2+...+x^n$.

c) If $V=-n\alpha$ then $x^{-n}+...+x^{-3}+x^{-2}$.

d) If $V=n\alpha+j\beta$ then $(1+x+...+x^n)(1+x+...+x^j)$.

e) If $V=n\alpha-j\beta$ then $(1+x+...+x^n)(x^{-j}+...+x^{-2})$.

f) If $V=-n\alpha-j\beta$ then $(x^{-n}+...+x^{-2})(x^{-j}+...+x^{-2})$.
\end{proposition}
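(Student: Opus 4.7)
The plan is to derive these Poincaré series inductively, using the Euler-class cofiber sequences associated to the two one-dimensional representations $\alpha$ and $\beta$, and reducing to the $RO(C_2)$-graded computation of Stong recalled in Theorem \ref{Br}. Part (a) is immediate: $H^{V}_{\Br}(\pt,\underline{\Z/2})$ for $V=0$ is concentrated in degree $0$ and equals $\Z/2$, giving Poincaré series $1$.

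For part (b), I would proceed by induction on $n$. Writing $H=\ker(\alpha)\subset K$, the Euler class $a_\alpha\in H^{\alpha}_{\Br}(\pt,\underline{\Z/2})$ fits into the cofiber sequence $K/H_+\to S^0\to S^{\alpha}$. Smashing with $S^{n\alpha}$ and applying $H^{-*}_{\Br}(-,\underline{\Z/2})$ yields a long exact sequence relating $H^{-*+n\alpha}_{\Br}(\pt)$, $H^{-*+(n+1)\alpha}_{\Br}(\pt)$ and $H^{-*+n\alpha}_{\Br}(K/H_+)$. By the Wirthmüller isomorphism, the last group equals $H^{-*+n}_{\Br,H}(\pt,\underline{\Z/2})$ (since $\alpha|_H$ is trivial), and for the subgroup $H\cong C_2$ with trivial induced action this is $\Z/2$ in the appropriate integer degree. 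Iterating, the contributions stack up to give the Poincaré series $1+x+\cdots+x^n$.

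For parts (c) and (e)--(f) I would use the same cofiber sequence smashed with $S^{-j\alpha}$ (respectively $S^{l\beta}$ or $S^{-j\beta}$) to inductively handle negative exponents, with base case $V=0$ or $V=l\beta$. The characteristic gap from $x^{-2}$ onwards in (c) and (f) reflects the analogous gap in the $RO(C_2)$-graded Bredon cohomology of a point seen in Theorem \ref{Br}, i.e.\ the negative cone starts in degree $(2,-2)$. For parts (d) and (e), I would combine the two cofiber sequences (one for $\alpha$, one for $\beta$) and induct first on one parameter while keeping the other fixed; the factorized Poincaré series $(1+x+\cdots+x^n)(1+x+\cdots+x^l)$ in (d) emerges from a Künneth-style splitting once the orientation classes for $S^{n\alpha}$ and $S^{l\beta}$ are in place.

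The main obstacle I anticipate is the mixed negative case (f), where both Euler classes act non-trivially and one must verify that the extensions in the associated long exact sequences split so as to yield the precise product form of the series. A related subtlety is that $\ker(\alpha)$ and $\ker(\beta)$ are distinct copies of $C_2$ inside $K$, so the Wirthmüller identifications at successive inductive steps shift between different subgroup restrictions, and one must track bases carefully to confirm the series, rather than merely the total dimension, matches the claimed product. Since the result is attributed to Holler--Kriz \cite{KHo}, at this point I would align the bookkeeping with their conventions—and in particular check whether the undefined ``$n$" in (c) and ``$n$" in (f) should read ``$j$" and ``$i$" respectively.
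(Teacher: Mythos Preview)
The paper does not give its own proof of this proposition: it is stated with a citation to Holler--Kriz \cite{KHo} and used as input, so there is no argument in the paper to compare your sketch against. Your inductive approach via the cofiber sequences $K/H_+\to S^0\to S^\alpha$ (and the analogous one for $\beta$), combined with Wirthm\"uller restriction to $H\cong C_2$ and the Stong computation of $\MMt$, is the standard route to such results and is essentially how the original computation proceeds; it would work, though as you note the bookkeeping in the mixed-sign cases requires care. Your observation that the statement contains typos---the ``$x^{-n}$'' in (c) should be ``$x^{-j}$'' and the ``$x^{-n}$'' in (f) should be ``$x^{-i}$'' (and similarly the ``$x^l$'' in (d) should presumably be ``$x^j$'')---is correct and worth flagging.
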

When all three nontrivial 1-dimensional irreducible representations are involved the answer is more complicated. The following is the description of the $RO(C_2\times \Sigma_2)-$graded Bredon cohomology groups of a point in the positive cone (i.e. by definition all three nontrivial irreducible representations have nonnegative coefficients).
\begin{proposition} (\cite{KHo})\label{pc} Let $l,m,n\geq 0$. The Poincare series for $H^{-*+l\alpha+m\beta+n\gamma}_{Br,K}(pt,\Z/2)$ is 
$$(1+x+...+x^l)(1+x+...+x^m)+x(1+x+...+x^{l+m})(1+...+x^{n-1}).$$
\end{proposition}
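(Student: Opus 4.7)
I would argue by induction on $n$. When $n=0$, the second summand in the claimed formula is empty and the Poincare series reduces to $(1+x+\cdots+x^l)(1+x+\cdots+x^m)$, which is exactly the content of Proposition \ref{PC}(d).

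For the inductive step, consider the $K$-equivariant cofiber sequence
$$S(\gamma)_+ \to S^0 \to S^\gamma$$
and the associated long exact sequence in reduced Bredon cohomology. Setting $W=-k+l\alpha+m\beta+n\gamma$, this reads
$$\cdots \to H^{W-\gamma}_{\Br,K}(pt) \xrightarrow{\cdot e_\gamma} H^W_{\Br,K}(pt) \to \widetilde{H}^W_{\Br,K}(S(\gamma)_+) \to H^{W+1-\gamma}_{\Br,K}(pt) \to \cdots$$
with $e_\gamma \in H^\gamma_{\Br,K}(pt)$ the Euler class. The unit sphere $S(\gamma)$ is equivariantly $K/H$, where $H=\ker(\chi_\gamma)$ is an index-two subgroup of $K$ isomorphic to $C_2$, so the change-of-groups isomorphism yields $\widetilde{H}^W_{\Br,K}(S(\gamma)_+)\cong H^{W|_H}_{\Br,H}(pt)$. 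Because $\chi_\gamma=\chi_\alpha\chi_\beta$, both $\alpha|_H$ and $\beta|_H$ are the sign representation of $H$ while $\gamma|_H$ is trivial; hence $W|_H=-(k-n)+(l+m)\sigma$, and Proposition \ref{PC}(b) shows that this piece contributes a Poincare series of $x^n(1+x+\cdots+x^{l+m})$.

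If the long exact sequence splits into short exact sequences (equivalently, $e_\gamma$ acts injectively on the relevant groups), then additivity of Poincare series yields the recursion
$$P_n(x) = P_{n-1}(x) + x^n(1+x+\cdots+x^{l+m}),$$
which telescopes from the base case to the claimed closed form. The main obstacle will be verifying this splitting: in $RO(C_2)$-graded Bredon cohomology, Euler-class multiplication is famously non-injective on the negative cone (cf.\ Theorem \ref{Br}), so injectivity in the full $RO(K)$-graded setting is a delicate module-theoretic statement. I would approach this by filtering $S(n\gamma)$ by its lower cells, chasing generators explicitly through the additive bases identified in Proposition \ref{PC}, and appealing to the detailed Mackey functor computations of Holler--Kriz \cite{KHo} when required.
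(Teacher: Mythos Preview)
The paper does not prove this proposition; it is quoted from Holler--Kriz \cite{KHo} as external input. Your inductive strategy via the cofiber sequence $S(\gamma)_+\to S^0\to S^\gamma$ is a correct way to derive the formula, and your identification of the $S(\gamma)_+$ term through the change-of-groups isomorphism is right.

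The one genuine gap is the injectivity of Euler-class multiplication, which you flag but do not settle. Your proposed fix---appealing to ``the detailed Mackey functor computations of Holler--Kriz \cite{KHo} when required''---is circular, since \cite{KHo} is precisely the source of the statement you are trying to establish. There is, however, a clean non-circular argument using material already recorded in the paper. For $n\geq 1$ both the domain and target of $e_\gamma\cdot(-)$ lie in the positive cone, and by Theorem~\ref{pcon} that cone is the ring $\Z/2[x_1,y_1,x_2,y_2,x_3,y_3]/(f)$ with $f=x_1y_2y_3+y_1x_2y_3+y_1y_2x_3$; multiplication by $e_\gamma$ is multiplication by the corresponding $x_i$. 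Each $x_i$ is a non-zerodivisor modulo $f$: if $x_3g=fh$ in the polynomial ring, then setting $x_3=0$ gives $0=y_3(x_1y_2+y_1x_2)\cdot h|_{x_3=0}$, so $x_3\mid h$ and hence $g\in(f)$. This closes the gap and your recursion $P_n=P_{n-1}+x^n(1+\cdots+x^{l+m})$ then telescopes to the claimed formula. (One should check that the derivation of Theorem~\ref{pcon} in \cite{BH} does not itself rest on Proposition~\ref{pc}; alternatively, the positive-cone ring structure can be obtained directly from the equivariant cellular chain complexes of representation spheres, avoiding the issue.)
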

The following is the description of the $RO(C_2\times \Sigma_2)-$graded Bredon cohomology groups of a point in the mixed cone of type I (i.e. by definition there is exactly one nontrivial irreducible $C_2\times \Sigma_2$-representation with a negative coefficient and the other two nontrivial irreducible $C_2\times \Sigma_2$-representations have nonnegative coefficients):
\begin{proposition}\cite{KHo} \label{caseMC1} Let $k,l,m\geq 1$. If $k\leq l,m$ then the Poincare series for $H^{-*+l\alpha+m\beta-k\gamma}_{Br,K}(pt,\Z/2)$ is
$$(\frac{1}{x^k}+...+\frac{1}{x})(1+x+...+x^{k-2})+x^k(1+...+x^{l-k})(1+...+x^{m-k}). $$ 
In the case $k>l$ the Poincare series for $H^{-*+l\alpha+m\beta-k\gamma}_{Br,K}(pt,\Z/2)$ is
$$\frac{1}{x^{l+1}}(1+...+x^l)(1+...+x^{l-1})+\frac{1}{x^k}(1+...+x^{k-l-2})(1+...+x^{l+m}).$$
Swapping the role of $l$ and $m$ gives the case $k>m$.
\end{proposition}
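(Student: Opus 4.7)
The plan is to derive these formulas inductively from the positive-cone computation (Proposition \ref{pc}) by iterating the isotropy cofiber sequence $S(V)_+\to S^0\to S^V$ with $V=\gamma$. Smashing this sequence with $S^{l\alpha+m\beta-(k-1)\gamma}$ and applying $H^{-*}_{Br}(-,\Z/2)$ yields a long exact sequence
\[
\cdots \to H^{W-\gamma}_{Br}(pt) \to H^{W}_{Br}(pt) \to H^{W}_{Br}(S(\gamma)_+) \to H^{W-\gamma+1}_{Br}(pt) \to \cdots,
\]
with $W = -*+l\alpha+m\beta-(k-1)\gamma$ varying. Since $\gamma$ has kernel a proper $C_2$-subgroup of $K$, one has $S(\gamma)_+ \simeq (K/\ker\gamma)_+$, so the middle term reduces by Wirthm\"uller-type restriction to an $RO(C_2)$-graded Bredon cohomology computation over $\ker\gamma$ --- namely to Stong's computation of $H^{\st}_{Br,C_2}(pt)$ recalled in Theorem \ref{Br}.

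The induction is on $k$. For $k=1$ the above sequence connects the positive cone (Proposition \ref{pc}) to the $k=1$ mixed cone, yielding the stated Poincar\'e series after checking that the long exact sequence splits additively in each bidegree. For the inductive step, smashing the $\gamma$-cofiber sequence with $S^{-(k-1)\gamma}$ realizes the connecting maps as multiplication by the Euler class $a_\gamma \in H^{\gamma}_{Br}(pt)$, and the Poincar\'e series update from the $(k-1)$-case to the $k$-case is then obtained termwise, collapsing the alternating sum of series from Propositions \ref{PC} and \ref{pc}.

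The case distinction $k\leq l,m$ versus $k>l$ reflects two geometric regimes. When $k\leq l,m$, the positive cone still contributes a surviving block $x^k(1+\cdots+x^{l-k})(1+\cdots+x^{m-k})$, while the new negative-degree classes produced by inverting $\gamma$ contribute $(x^{-k}+\cdots+x^{-1})(1+\cdots+x^{k-2})$; these correspond respectively to the cokernel and image of the connecting map into the positive cone. When $k>l$ (or symmetrically $k>m$), the positive $\alpha$-direction is exhausted before the $\gamma$-inversion can be fully absorbed, and the formula rearranges into the two-term expression stated in the proposition, with the first summand corresponding to the ``exhausted'' piece and the second to the remaining negative-cone tail of length $k-l-2$.

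The main obstacle will be controlling the extensions in the long exact sequences: $RO(K)$-graded Bredon cohomology admits many potentially nontrivial extension problems, and verifying the additive splitting in each bidegree requires careful bookkeeping of which positive-cone classes are annihilated by powers of $a_\gamma$ and which support nontrivial divisions. This is precisely the content worked out in \cite{KHo} and corrected in \cite{GY}: the relevant Euler classes turn out to be sufficiently nilpotent on the relevant submodules for these extensions to vanish, producing the clean additive splitting stated in the proposition.
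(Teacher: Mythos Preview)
The paper does not prove this proposition: it is quoted from \cite{KHo} (with a correction noted from \cite{GY}) and stated without argument. There is therefore no ``paper's own proof'' to compare against.

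That said, your proposal is not really a proof either. You correctly identify the standard mechanism --- the cofiber sequence $S(\gamma)_+\to S^0\to S^\gamma$ and induction on $k$ --- and you correctly note that $S(\gamma)_+\simeq (K/\ker\gamma)_+$ reduces the middle term to a $C_2$-computation. But the substance of the computation lies exactly in the step you wave away: determining the kernel and cokernel of multiplication by the Euler class $a_\gamma$ on the positive-cone module of Proposition~\ref{pc}, and then tracking this through $k$ iterations. Your final paragraph concedes this (``The main obstacle will be controlling the extensions\ldots This is precisely the content worked out in \cite{KHo}''), which amounts to citing the very reference the proposition already cites. If you intend to supply an independent proof, you would need to actually carry out the Poincar\'e-series bookkeeping for at least one of the two regimes and show the claimed splitting; as written, the proposal is a plausible outline but not a proof.
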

The following is the description of the $RO(C_2\times \Sigma_2)-$graded Bredon cohomology groups of a point in the mixed cone of type II (i.e. by definition there is exactly two nontrivial irreducible $C_2\times \Sigma_2$-representation with a negative coefficient and the other one nontrivial irreducible $C_2\times \Sigma_2$-representation has a nonnegative coefficient):

\begin{proposition} \cite{KHo}\label{caseMC2} Let $j,k,l\geq 1$. Then the Poincare series for $H^{-*+l\alpha-j\beta-k\gamma}_{Br,K}(pt,\Z/2)$ is 
$$\frac{1}{x^{j+k-l}}(1+...+x^{j-l-2})(1+...+x^{k-l-2})+\frac{1}{x^{l+1}}(1+...+x^l)(1+...+x^{l-1}),$$
if $j,k\geq l+1$ or 
$$\frac{1}{x^j}(1+...+x^{j-2})(1+...+x^{l-k})+\frac{1}{x^k}(1+...+x^{l-1})(1+...+x^{k-1})$$
if $l\geq k$. Swapping the role of $j$ and $k$ gives the case $l\geq j$.
\end{proposition}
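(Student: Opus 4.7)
The plan is to compute the Poincar\'e series by induction on the total magnitude $j+k+l$, using the isotropy cofiber sequences $S(V)_+ \to S^0 \to S^V$ for the three nontrivial $1$-dimensional $K$-representations. Smashing such sequences with arbitrary representation spheres $S^W$ yields long exact sequences in $RO(K)$-graded Bredon cohomology that relate cohomology in degree $W$ to cohomology in degrees $W\pm V$ (plus a piece supported on $S(V)$ whose Bredon cohomology is computable because $S(V)$ has strictly smaller isotropy). Because $S^{-V}$ is Spanier--Whitehead dual to $S^V$, one can convert a subtraction of $V$ into a suspension by $S^V$ on the space side, so the inductive step can always be arranged to decrease either a positive coefficient (by cofibering out $S^V$) or a negative coefficient (by smashing with $S^V$ first).

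The base of the induction is the $RO(C_2)$-graded cohomology $H^{\star}_{Br}(pt,\Z/2)$ recalled in Theorem \ref{Br}, together with the positive cone Poincar\'e series of Proposition \ref{pc} (for $l\alpha+m\beta+n\gamma$, $l,m,n\geq 0$) and the one-variable and two-variable cases of Proposition \ref{PC}. These give the starting values for every ``wall'' where one of $j,k,l$ reaches $0$, as well as the module structure over the positive cone that is preserved by the long exact sequences.

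For the actual statement, the induction should be organized by the two subcases it distinguishes. When $j,k\geq l+1$, I would induct on $l$, cofibering the $\alpha$ direction: the short exact sequence associated to $S^0\to S^\alpha$ (after smashing with $S^{(l-1)\alpha-j\beta-k\gamma}$) relates the target Poincar\'e series to the one for $l-1$ and a correction term supported on $S(\alpha) \cong K/\Sigma_2 \times$ (fixed points), whose cohomology is a shifted copy of $RO(C_2)$-graded material coming from the subgroup fixing $\alpha$. Tracking these contributions produces the two summands displayed, the first coming from the negative part $-j\beta-k\gamma$ (a type~(f) product from Proposition \ref{PC}, shifted by $x^{-(j+k-l)}$) and the second coming from the Tate-type ``glue'' $\tfrac{1}{x^{l+1}}(1+\cdots+x^l)(1+\cdots+x^{l-1})$ that always appears when a positive direction is dominated by a negative one. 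When instead $l\geq k$, $\alpha$ dominates $\gamma$, and I would induct on $k$ using the cofiber sequence for $\gamma$: here the ``absorption'' of the negative $\gamma$ by $l\alpha$ produces the factor $(1+\cdots+x^{l-k})$ in the first summand, while the second summand records what survives below degree $-k$.

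The main obstacle is the bookkeeping of the connecting maps in the long exact sequences, specifically deciding when the boundary map is an isomorphism, a monomorphism, or zero on each graded piece. These behaviours depend sensitively on the relative sizes of $j,k,l$, which is why the final answer splits into the cases $j,k\geq l+1$ and $l\geq k$ (and symmetrically $l\geq j$). Verifying that the two subcases agree on their overlap and patch together to cover all $(j,k,l)$ with $j,k,l\geq 1$ requires matching the Poincar\'e series obtained from the two different inductive routes, which is the most delicate part of the computation.
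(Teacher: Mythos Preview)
The paper does not give a proof of this proposition: it is stated with the citation \cite{KHo} and no argument is supplied. So there is nothing in the paper to compare your proposal against.

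That said, your proposal is not a proof but a strategy outline. The inductive scheme via the cofiber sequences $S(V)_+\to S^0\to S^V$ smashed with $S^W$ is the standard way these computations are done (and is presumably close to what is in \cite{KHo}), but you have not actually executed any step of it. In particular, the content of the proposition lies entirely in the ``bookkeeping of the connecting maps'' that you yourself flag as the main obstacle: one must determine, degree by degree, whether the boundary is zero or an isomorphism, and then sum the resulting contributions to obtain the displayed polynomials. None of this is carried out. Your description of which summand ``comes from'' which piece is suggestive but unverified, and the claimed identification of $S(\alpha)$ with ``$K/\Sigma_2\times$(fixed points)'' is not quite right (for the sign representation $\alpha$ pulled back along a projection, $S(\alpha)$ is a free $C_2$-orbit for the relevant quotient, i.e.\ an induced space, not a product with a fixed-point set). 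As written this is a plan, not a proof; to complete it you would need to perform the induction explicitly in at least one of the two subcases and check the base and overlap cases.
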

The following is the description of the $RO(C_2\times \Sigma_2)-$graded Bredon cohomology groups of a point in the negative cone (i.e. by definition all three nontrivial irreducible $C_2\times \Sigma_2$- representations have negative coefficients):

\begin{proposition}\cite{KHo}\label{NC} Let $i,k,j\geq 1$. Then the Poincare series for $H^{-*-i\alpha-j\beta-k\gamma}_{Br,K}(pt,\Z/2)$ is 
$$\frac{1}{x^{i+j+k}}[(1+x+...+x^{j+k-2})(1+...+x^{i-2})+x^{i-1}(1+...+x^{k-1})(1+...x^{j-1})].$$
\end{proposition}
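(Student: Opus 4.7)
The plan is to reduce the negative-cone computation to the integer-degree Bredon cohomology of the unit sphere $S(V)$ for $V = i\alpha + j\beta + k\gamma$. Applying the fundamental cofiber sequence $S(V)_+ \to S^0 \to S^V$ gives a long exact sequence in $RO(K)$-graded Bredon cohomology of the form
\[
H^{W-V}_{\Br}(pt,\ul{\Z/2}) \to H^W_{\Br}(pt,\ul{\Z/2}) \to H^W_{\Br}(S(V),\ul{\Z/2}) \to H^{W+1-V}_{\Br}(pt,\ul{\Z/2}).
\]
Setting $W = -n$ for an integer $n \geq 2$, both $H^{-n}_{\Br}(pt,\ul{\Z/2})$ and $H^{1-n}_{\Br}(pt,\ul{\Z/2})$ vanish (they lie in purely integer grading away from the origin), yielding the isomorphism $H^{-n-V}_{\Br}(pt,\ul{\Z/2}) \cong H^{-n-1}_{\Br}(S(V),\ul{\Z/2})$. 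Thus the problem reduces to computing the integer-degree Bredon cohomology of the $K$-space $S(V)$, and the prefactor $1/x^{i+j+k}$ in the stated Poincare series records this reindexing.

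Next I would compute $H^*_{\Br}(S(V),\ul{\Z/2})$ using the $K$-equivariant join decomposition $S(V) = S(i\alpha) \ast S(j\beta) \ast S(k\gamma)$. Writing this as $S(V) = S(i\alpha) \ast S(j\beta + k\gamma)$ and exploiting the Mayer--Vietoris sequence associated to the pushout
\[
S(V) = \bigl(D(i\alpha) \ti S(j\beta + k\gamma)\bigr) \cup_{S(i\alpha) \ti S(j\beta + k\gamma)} \bigl(S(i\alpha) \ti D(j\beta + k\gamma)\bigr),
\]
one reduces inductively to the cohomologies of $S(i\alpha)$, $S(j\beta + k\gamma)$, and their product. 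The base-case cohomologies $H^*_{\Br}(S(m\delta),\ul{\Z/2})$ for a single 1-dimensional representation $\delta$ can be extracted from the $RO(C_2)$-graded computations underlying Propositions \ref{PC}--\ref{caseMC2} via the cofiber sequence $S(m\delta)_+ \to S^0 \to S^{m\delta}$.

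The two summands in the stated formula correspond to the two pieces contributed by the pushout. The first summand $(1 + x + \cdots + x^{j+k-2})(1 + \cdots + x^{i-2})$ arises from the cells of $S(i\alpha) \ti D(j\beta + k\gamma)$, whose integer cohomology (in the relevant range) combines the cellular contribution of $S(i\alpha)$ with the cohomology of the interior through a Künneth-type projection. The second summand $x^{i-1}(1 + \cdots + x^{k-1})(1 + \cdots + x^{j-1})$ arises from the cells of $D(i\alpha) \ti S(j\beta + k\gamma)$, with the shift $x^{i-1}$ recording the dimension of $D(i\alpha)$ relative to the join cell structure and with the two inner factors coming from the analogous subformulas for $S(j\beta + k\gamma)$.

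The main obstacle is controlling the Mayer--Vietoris spectral sequence so that the associated graded recovers the stated formula additively. Concretely, one must verify that the connecting homomorphism from $H^{*-1}(S(i\alpha) \ti S(j\beta + k\gamma))$ into $H^*(S(V))$ contributes only as encoded in the two principal pieces above, and that there are no hidden additive extensions between them. This is subtle because the Künneth formula fails in general for equivariant Bredon cohomology; the verification requires a careful analysis of the isotropy structure of each cell $K/H \ti D^m$ in $S(V)$ together with the action of the Euler classes of $\alpha$, $\beta$, and $\gamma$ on their cohomology.
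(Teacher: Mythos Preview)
The paper does not prove this proposition; it is quoted from \cite{KHo}, so there is no argument in the paper to compare against.

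On your approach: the reduction to $H^*_{\Br}(S(V),\ul{\Z/2})$ via the cofiber sequence is valid, but your index restriction lands on the wrong side. The Poincare series shows that $H^{-n-V}_{\Br}(pt)$ is nonzero only for $n\in[-(i+j+k),-3]$, all negative; you restrict to $n\geq 2$, where both sides of your isomorphism vanish trivially. The long exact sequence argument works equally well for $n\leq -2$ (since $H^{-n}_{\Br}(pt)$ and $H^{-n-1}_{\Br}(pt)$ still vanish there), so this is a slip rather than a fatal error, but as written the reduction is vacuous. Note also that the prefactor $x^{-(i+j+k)}$ is not a record of ``the reindexing'': your isomorphism shifts degree by $1$, and the power $i+j+k$ enters only through the dimension of $S(V)$ after the substitution $x\mapsto 1/x$.

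The genuine gap is the identification of the two summands with the Mayer--Vietoris pieces. The open set $S(i\alpha)\times D(j\beta+k\gamma)$ retracts to $S(i\alpha)$, whose points all have isotropy $\ker\alpha$, so $H^*_{K}(S(i\alpha);\ul{\Z/2})\cong H^*(\R P^{i-1};\Z/2)$ with Poincare polynomial $1+x+\cdots+x^{i-1}$, which is \emph{not} the first summand $(1+\cdots+x^{j+k-2})(1+\cdots+x^{i-2})$. The two terms in the Holler--Kriz formula simply do not correspond to your two open sets; indeed the formula is not manifestly symmetric in $i,j,k$, whereas the join decomposition is. Your final paragraph correctly names the obstacle---K\"unneth fails and the connecting maps and extensions must be controlled---but proposes no mechanism to resolve it. At that point one is back to a direct cell-by-cell chain-level computation, which is precisely how \cite{KHo} proceeds.
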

We conclude with the following vanishing propositions. From Proposition \ref{caseMC2} we have
\begin{proposition} \label{cvan} If $a>b\geq -q>0$ then $H^{a+q\sigma+b\epsilon+q\sigma\otimes\epsilon} _{Br,K}(pt,\Z/2)=0$.
\end{proposition}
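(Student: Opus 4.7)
The plan is to deduce this vanishing directly from Proposition \ref{caseMC2}, which already gives the Poincare series of $H^{-*+V}_{Br}(pt,\Z/2)$ for any virtual representation $V$ of the form $l\alpha - j\beta - k\gamma$ (with $\alpha,\beta,\gamma$ the three nontrivial one-dimensional $K$-representations). So the argument is really just a matter of matching indices and reading off the support of a Laurent polynomial.

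First I would rewrite the target representation to fit the format of Proposition \ref{caseMC2}. Under the hypothesis $-q > 0$, set $j := -q > 0$ and choose $\alpha = \epsilon$, $\beta = \sigma$, $\gamma = \sigma\otimes\epsilon$. Then
\[
q\sigma + b\epsilon + q\sigma\otimes\epsilon \;=\; b\alpha - j\beta - j\gamma,
\]
so in the notation of Proposition \ref{caseMC2} we have $l = b$ and $j = k$. The hypothesis $b \geq -q$ is exactly $l \geq k$, which places us in the second case of that proposition. Substituting $l = b$ and $j = k = -q$ into
\[
\frac{1}{x^j}(1+\cdots+x^{j-2})(1+\cdots+x^{l-k}) \;+\; \frac{1}{x^k}(1+\cdots+x^{l-1})(1+\cdots+x^{k-1}),
\]
I would track the support: the first summand contributes monomials of degrees in $[-j,\,b+q-2]$ and the second in $[-j,\,b-2]$. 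In particular every nonzero monomial that appears has exponent $\geq -j = q$.

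Finally I would translate back. By the convention of Subsection \ref{rok}, the coefficient of $x^n$ in the Poincare series is the $\Z/2$-dimension of $H^{-n + V}_{Br}(pt,\Z/2)$, so the dimension we want is the coefficient of $x^{-a}$ in the series above. The hypothesis $a > b \geq -q$ gives $a > -q$, hence $-a < q = -j$, strictly below the support of the Poincare series. Therefore the coefficient of $x^{-a}$ vanishes, and $H^{a+q\sigma+b\epsilon+q\sigma\otimes\epsilon}_{Br}(pt,\Z/2)=0$.

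The only real obstacle here is bookkeeping: one must keep straight which 1-dimensional representation is called $\alpha$, $\beta$, $\gamma$ in the quoted Poincare series, and correctly translate the Laurent-polynomial indexing ($H^{-n+V}$ versus $H^{a+V}$) into the sign conventions used in the statement. Once those are aligned, the proof is an immediate vanishing argument on the support of a polynomial.
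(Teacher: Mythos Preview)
Your proposal is correct and is precisely the approach the paper takes: the paper's proof is simply the one-line remark ``From Proposition \ref{caseMC2} we have'', and you have carried out exactly that deduction by matching indices ($l=b$, $j=k=-q$, landing in the case $l\geq k$) and checking that the coefficient of $x^{-a}$ lies outside the support of the Poincar\'e series.
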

From Proposition \ref{pc} we have
\begin{proposition} \label{cvan2} If $b,q\geq 0$ and $a \geq 1,$ then $H^{a+q\sigma+b\epsilon+q\sigma\otimes\epsilon} _{Br,K}(pt,\Z/2)=0$.
\end{proposition}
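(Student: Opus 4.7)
My plan is to deduce the vanishing directly from the Poincar\'e series computation in Proposition \ref{pc}. The convention (visible by checking Proposition \ref{PC}(b): for $V = n\alpha$ with $n \ge 0$, one gets $1 + x + \cdots + x^n$, matching the $n+1$ nontrivial positive-cone groups $H^{a+n\sigma}_{Br}(pt)$ in degrees $-n \le a \le 0$) is that the Poincar\'e series $P_V(x) = \sum_k \dim_{\Z/2} H^{-k+V}_{Br}(pt,\Z/2) \cdot x^k$ records the cohomology in the form $H^{-k+V}$. Thus to establish the claimed vanishing of $H^{a+V}_{Br}(pt,\Z/2)$ for $a \ge 1$, where $V = q\sigma + b\epsilon + q\sigma\otimes\epsilon$, it suffices to show that $P_V(x)$ has no terms in $x^k$ for $k \le -1$.

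The hypothesis $b, q \ge 0$ places $V$ precisely in the regime covered by Proposition \ref{pc}: after identifying the three one-dimensional irreducible representations of $C_2\times\Sigma_2$ with $\alpha,\beta,\gamma$ (for instance $(\alpha,\beta,\gamma) = (\sigma, \epsilon, \sigma\otimes\epsilon)$) we take $(l,m,n) = (q,b,q)$. Proposition \ref{pc} then gives
\[
P_V(x) = (1+x+\cdots+x^q)(1+x+\cdots+x^b) + x(1+x+\cdots+x^{q+b})(1+\cdots+x^{q-1}),
\]
which is manifestly a polynomial in $x$ with non-negative exponents only (understanding the second summand as $0$ when $q = 0$). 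In particular the coefficient of $x^{-a}$ vanishes for every $a \ge 1$, and consequently $H^{a+q\sigma+b\epsilon+q\sigma\otimes\epsilon}_{Br}(pt,\Z/2) = 0$ for all such $a$.

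There is essentially no obstacle here beyond matching conventions: the only point to be careful about is that Proposition \ref{pc} covers exactly the positive cone $l,m,n \ge 0$, which is the case we need, and that its Poincar\'e series is concentrated in non-negative degrees. Should one wish to verify the identification independent of \cite{KHo}, one can cross-check with Proposition \ref{PC}(d) (the two-variable case $n = 0$ specializes to $(1+x+\cdots+x^l)(1+x+\cdots+x^m)$, which indeed agrees with the quoted formula after setting $n = 0$).
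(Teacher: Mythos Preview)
Your proof is correct and follows exactly the paper's approach: the paper simply states that the vanishing follows from Proposition~\ref{pc}, and you have spelled out precisely why---the Poincar\'e series for the positive cone involves only non-negative powers of $x$, hence $H^{-k+V}_{Br}(pt,\Z/2)=0$ for $k\le -1$, i.e.\ $H^{a+V}_{Br}(pt,\Z/2)=0$ for $a\ge 1$.
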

\subsection{$RO(C_2\times \Sigma _2)$-graded Bredon cohomology ring of a point}
In this section we describe the positive cone of the $RO(C_2\times \Sigma _2)$-graded Bredon cohomology ring of a point $H^{a+p\sigma+b\epsilon+q\sigma\otimes\epsilon}_{Br,K}(pt,\Z/2)$ (i.e. $p,b,q\geq 0$), as well as some cohomology classes that appear in this ring following \cite{BH}. 
\subsubsection{Positive Cone; i.e. $p,b,q\geq 0$.}
We have for $V$ an actual  $C_2\times \Sigma_2$-representation the following equality of Mackey functors:
	\begin{align*}
		\ul{\pi}_p(S^V \wedge H\ul{\Z/2})=\ul{H}^{Br,K}_p(S^V;\Z/2) \simeq \ul{H}^{Br,K}_{p-V}(pt;\Z/2) \simeq \ul{H}^{-p+V}_{Br,K}(pt;\Z/2).
	\end{align*}
	Then we have the generators of the positive cones $\Z/2[x_i,y_i]$, $i=1,2,3,$ of the $RO(C_2)$-graded Bredon cohomology rings corresponding to each of three nontrivial one-dimensional $C_2\times \Sigma_2$ representations. 
	
	We denote by $\pi^G_*$ the top level of the Mackey functor given by the equivariant stable homotopy group. The computations below follow from Proposition \ref{pc} and Proposition \ref{PC}.
	\begin{align*}
		x_1 &\in \pi_0^G(S^{0,1,0,0}\wedge H\ul{\Z/2})\simeq {H}_{Br,K}^{\sigma}(pt;\Z/2) \simeq \Z/2,\\
		y_1 &\in \pi_1^G(S^{0,1,0,0}\wedge H\ul{\Z/2})\simeq {H}_{Br,K}^{-1+\sigma}(pt;\Z/2) \simeq  \Z/2,\\
		x_2 &\in \pi_0^G(S^{0,0,1,0}\wedge H\ul{\Z/2})\simeq {H}_{Br,K}^{\epsilon}(pt;\Z/2) \simeq  \Z/2,\\
		y_2 &\in \pi_1^G(S^{0,0,1,0}\wedge H\ul{\Z/2})\simeq {H}_{Br,K}^{-1+\epsilon}(pt;\Z/2) \simeq  \Z/2,\\
		x_3 &\in \pi_0^G(S^{0,0,0,1}\wedge H\ul{\Z/2})\simeq {H}_{Br,K}^{\sigma\otimes\epsilon}(pt;\Z/2) \simeq  \Z/2,\\
		y_3 &\in \pi_1^G(S^{0,0,0,1}\wedge H\ul{\Z/2})\simeq {H}_{Br,K}^{-1+\sigma\otimes\epsilon}(pt;\Z/2) \simeq  \Z/2,\\
		\theta_1 & \in \pi^G_{-2}(S^{0,-2,0,0}\wedge H \ul{\Z/2})\simeq {H}_{Br,K}^{2-2\sigma}(pt;\Z/2) \simeq \Z/2,\\
		\theta_2 & \in \pi^G_{-2}(S^{0,0,-2,0}\wedge H \ul{\Z/2})\simeq {H}_{Br,K}^{2-2\epsilon}(pt;\Z/2) \simeq \Z/2,\\
		\theta_3 & \in \pi^G_{-2}(S^{0,0,0,-2}\wedge H \ul{\Z/2})\simeq {H}_{Br,K}^{2-2\sigma\otimes\epsilon}(pt;\Z/2) \simeq \Z/2.
	\end{align*}
with the cohomological classes given by the only non-trivial element in each of the above abelian groups. From Proposition \ref{PC} and Theorem \ref{Br}, we have that 
$$H^{*+*\sigma}_{Br,C_2}(pt,\Z/2)=\Z/2[x_1,y_1]\oplus\Z/2\left\{\frac{\theta_1}{x^{n_1}_1y^{m_1}_1}\right\},$$
and 
$$H^{*+*\epsilon}_{Br,C_2}(pt,\Z/2)=\Z/2[x_2,y_2]\oplus\Z/2\left\{\frac{\theta_2}{x^{n_1}_2y^{m_1}_2}\right\},$$
or
$$H^{*+*\sigma\otimes\epsilon}_{Br,C_2}(pt,\Z/2)=\Z/2[x_3,y_3]\oplus\Z/2\left\{\frac{\theta_3}{x^{n_1}_3y^{m_1}_3}\right\},$$
are nontrivial proper subrings in the $RO(C_2\times\Sigma_2)$-graded Bredon cohomology of a point.
	\begin{theorem} (\cite{BH})\label{pcon}
		The Mackey functor structure of the positive cone in $\ul{\pi}_\star H\ul{\Z/2}$ is given by the Mackey functor of $RO(C_2\times \Sigma_2)$-graded rings
		\begin{equation*}
			\begin{tikzcd}
				& \frac{\Z/2[x_1,y_1,x_2,y_2,x_3,y_3]}{(x_1y_2y_3+y_1x_2y_3+y_1y_2x_3)}\ar[dl] \ar[d] \ar[dr] & \\
				\frac{\Z/2[y_1,x_2,y_2,x_3,y_3]}{(x_2y_3+y_2x_3)}\ar[dr] & \frac{\Z/2[x_1,y_1,y_2,x_3,y_3]}{(x_1y_3+y_1x_3)}\ar[d] & \frac{\Z/2[x_1,y_1,x_2,y_2,y_3]}{(x_1y_2+y_1x_2)}\ar[dl]\\
				& \Z/2[y_1,y_2,y_3] & 
			\end{tikzcd}
		\end{equation*}
		where each restriction map is the identity on a generator of the domain that is also a generator of the codomain and is zero on a generator otherwise. For example, the restriction of the $x_1$ in the top level is zero in $\frac{\Z/2[y_1,x_2,y_2,x_3,y_3]}{(x_2y_3+y_2x_3)}$ and is $x_1$ in $\frac{\Z/2[x_1,y_1,y_2,x_3,y_3]}{(x_1y_3+y_1x_3)}$ and $\frac{\Z/2[x_1,y_1,x_2,y_2,y_3]}{(x_1y_2+y_1x_2)}.$ The transfer maps are always zero.
	\end{theorem}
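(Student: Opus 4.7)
The plan is to identify the positive cone of $\ul{\pi}_\star H\ul{\Z/2}$ at each of the four levels of the Mackey functor by matching Poincar\'e series against the prescribed ring structure, and then to determine the restriction and transfer maps from how $RO(C_2\times\Sigma_2)$-characters restrict to subgroups.

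At the bottom level (trivial subgroup) the underlying cohomology of a point collapses to $\Z/2$ concentrated in dimension zero. The monomials $y_1^a y_2^b y_3^c$ all have underlying dimension zero while sitting in pairwise distinct $RO(G)$-degrees $-(a+b+c)+a\sigma+b\epsilon+c\sigma\epsilon$, so they span a polynomial subring $\Z/2[y_1,y_2,y_3]$; every $x_i$ has positive underlying dimension and hence vanishes in the positive cone at this level. For each of the three middle levels, the corresponding index-two subgroup $H\subset C_2\times\Sigma_2$ has the property that exactly one of $\sigma,\epsilon,\sigma\epsilon$ restricts to the trivial character of $H\cong C_2$ while the other two restrict to its sign character. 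If $\alpha_i$ is the character becoming trivial at $H$, then $x_i$ lands in underlying dimension one of an $RO(H)$-trivial degree and therefore restricts to zero in the positive cone; the remaining four generators restrict to the $x$ and $y$ classes of the $RO(C_2)$-graded Bredon cohomology, whose positive cone in two sign-representation directions is computed by Proposition~\ref{PC}(d). Since the two surviving $x_j$'s both hit the single generator $x$ and the two surviving $y_j$'s both hit $y$, the kernel of the map onto this cohomology contains $x_jy_k+y_jx_k$ for the two surviving indices, and a Poincar\'e-series comparison shows this single relation generates all the relations at the middle level.

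At the top level, the Poincar\'e series from Proposition~\ref{pc} equals $(1+t+\cdots+t^l)(1+t+\cdots+t^m)+t(1+t+\cdots+t^{l+m})(1+\cdots+t^{n-1})$ in tridegree $l\sigma+m\epsilon+n\sigma\epsilon$. I would verify by direct calculation that this matches the Hilbert series of the quotient $\Z/2[x_1,y_1,x_2,y_2,x_3,y_3]/(x_1y_2y_3+y_1x_2y_3+y_1y_2x_3)$ in every such tridegree. The relation lives in degree $-2+\sigma+\epsilon+\sigma\epsilon$, where the three monomials $x_1y_2y_3$, $y_1x_2y_3$, $y_1y_2x_3$ span a rank-two group by Proposition~\ref{pc} with $l=m=n=1$; compatibility with the three middle-level relations $x_j y_k + y_j x_k=0$, each obtained by multiplying the top-level relation by the appropriate $y_i$ and restricting, pins down its form uniquely over $\Z/2$.

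Once the rings at each level are identified, the restriction maps of the Mackey functor are determined by how characters restrict to subgroups as sketched above: a top-level generator restricts to the homonymous generator of a codomain if present, and to zero otherwise. Transfers vanish because the constant Mackey functor $\ul{\Z/2}$ has transfer maps given by multiplication by the subgroup index, which is $2\equiv 0\pmod 2$. The main obstacle is the careful degree-by-degree Poincar\'e-series matching at the top level, especially ruling out relations beyond the stated one: I would handle this by induction on $l+m+n$ using the recursion inherent in the Kronholm formula of Proposition~\ref{pc}, combined with the restriction maps already computed to detect any potential additional kernel classes.
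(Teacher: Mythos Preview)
The paper does not prove this theorem at all; it is quoted from \cite{BH} and used as background input. There is therefore no ``paper's own proof'' to compare against, and your proposal is an independent attempt at a result the authors take for granted.

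On the substance of your sketch: the level-by-level identification via restriction of characters and Poincar\'e-series matching is a reasonable strategy, and your treatment of the bottom and middle levels is essentially correct. Two points deserve tightening. First, the argument that transfers vanish is stated imprecisely: it is not that ``transfer equals multiplication by the index'' in the cohomology Mackey functor, but rather that $\mathrm{tr}(1)=[G:H]\cdot 1=0$ in $\Z/2$, and then Frobenius reciprocity $\mathrm{tr}(\mathrm{res}(b))=\mathrm{tr}(1)\cdot b$ kills all transfers once you have checked that restriction is surjective in the positive cone (which it is, since every generator at a lower level is the image of the homonymous generator above). Second, the top-level step---matching the Hilbert series of $\Z/2[x_i,y_i]/(x_1y_2y_3+y_1x_2y_3+y_1y_2x_3)$ against the formula of Proposition~\ref{pc} and verifying there are no further relations---is only asserted, not carried out; this is exactly where the work lies, and an inductive argument as you suggest would need to be written out carefully to be convincing. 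The identification of the specific relation via restriction compatibility is a nice touch, but it presupposes you already know the quotient has the right size.
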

	
To describe the other cones of the $RO(C_2\times \Sigma_2)-$graded Bredon cohomology ring of a point we need more cohomology classes than those described above.
	
The computations below follow from Proposition \ref{caseMC1}, Proposition \ref{caseMC2} and Proposition \ref{NC}.  We have the following seven new nontrivial cohomology classes:
	\begin{align*}
		\Theta &\in \pi_{-3}^G (S^{0,-1,-1,-1}\wedge H \ul{\Z/2}) \simeq {H}_{Br,K}^{3-\sigma-\epsilon-\sigma\otimes\epsilon}(pt;\Z/2) \simeq \Z/2,\\
		\kappa_1 &\in \pi_{1}^G (S^{0,-1,1,1}\wedge H \ul{\Z/2}) \simeq {H}_{Br,K}^{-1-\sigma+\epsilon+\sigma\otimes\epsilon}(pt;\Z/2) \simeq \Z/2,\\
		\kappa_2 &\in \pi_{1}^G (S^{0,1,-1,1}\wedge H \ul{\Z/2}) \simeq {H}_{Br,K}^{-1+\sigma-\epsilon+\sigma\otimes\epsilon}(pt;\Z/2) \simeq \Z/2,\\
		\kappa_3 &\in \pi_{1}^G (S^{0,1,1,-1}\wedge H \ul{\Z/2}) \simeq {H}_{Br,K}^{-1+\sigma+\epsilon-\sigma\otimes\epsilon}(pt;\Z/2) \simeq \Z/2,\\
		\iota_1 &\in \pi_{-1}^G (S^{0,1,-1,-1}\wedge H \ul{\Z/2}) \simeq {H}_{Br,K}^{1+\sigma-\epsilon-\sigma\otimes\epsilon}(pt;\Z/2) \simeq \Z/2,\\
		\iota_2 &\in \pi_{-1}^G (S^{0,-1,1,-1}\wedge H \ul{\Z/2}) \simeq {H}_{Br,K}^{1-\sigma+\epsilon-\sigma\otimes\epsilon}(pt;\Z/2) \simeq \Z/2,\\
		\iota_3 &\in \pi_{-1}^G (S^{0,-1,-1,1}\wedge H \ul{\Z/2}) \simeq {H}_{Br,K}^{1-\sigma-\epsilon+\sigma\otimes\epsilon}(pt;\Z/2) \simeq \Z/2.\\
	\end{align*}
	These cohomological classes satisfy the following relationships. For each $\{i,j,k\} = \{1,2,3\}$ (i.e. they are all distinct), we have
	\begin{align*}
		\iota_i \theta_i  &= \Theta \text{ and } \kappa_i\theta_j = \iota_k,\\
		\theta_j \theta_k &\neq 0,\\
		\iota_i \theta_j &= 0,\\
		\iota_i \kappa_i &= 0,\\
		\Theta^2 &= \theta_i \Theta = \kappa_i \Theta = \iota_i \Theta = 0,\\
		\iota_i \iota_j &= 0,\\
	\end{align*}
	and we can think of $\Theta$ as being divisible by $\theta_1,\theta_2$ and $\theta_3,$ where
	\begin{align*}
		\iota_i = \frac{\Theta}{\theta_i} \text{ and } \kappa_i  = \frac{\Theta}{\theta_j \theta_k}.
	\end{align*}
	Then we can think of $\Theta$ as being infinitely divisible by $x_i,y_i$ for $i=1,2,3$. 
		
	By degree reasons, we have
	\begin{align*}
		\Theta x_i = \Theta y_i = 0
	\end{align*}
	for all $i\in \{1,2,3\}$ and similarly
	\begin{align*}
		\iota_i x_j = \iota_i y_j = 0
	\end{align*}
	for all $i,j \in \{1,2,3\}$ with $i\neq j.$ However, it is not true that $\kappa_i x_i = \kappa_i y_i = 0$ for all $i \in \{1,2,3\}.$\\
	For each $\{i,j,k\}=\{1,2,3\}$, we have the following relations
	\begin{equation}\label{Vcomp1}
	\kappa _{i} x_i = x_j y_k + y_j x_k
	\end{equation}
	\begin{equation}\label{Vcomp2}
		\kappa_i y_i = y_j y_k,
	\end{equation}
	\begin{equation} \label{Vcomp3}	
		\kappa_i^2 \neq 0,
		\kappa_i \kappa_j = y_k^2.
	\end{equation}
	
	We cannot express $\kappa_i^2$ in terms of $x_i,y_i$ and $\theta_i.$
	
	It is proved in \cite{BH}, \cite{DV} that the entire $RO(C_2\times \Sigma_2)$-graded Bredon cohomology ring of a point can be expressed in terms of the above cohomology classes. See \cite{DV} for a more explicit description of the cohomology  classes in this ring.
\subsection{$C_2\times\Sigma_2$ topological isotropy sequence}
In this section, we prove the following vanishing theorem for the $C_2\times \Sigma_2$ topological isotropy sequence \ref{eqn:cof4}:
\begin{theorem} \label{isov}The $H^\st_{Br,K}(pt,\Z/2)$-module map induced by the $C_2\times \Sigma_2$ topological isotropy sequence \ref{eqn:cof4}
$$\tilde{H}^{a+p\sigma+b\epsilon+q\sigma\otimes\epsilon}_{Br,K}(\tilde{E}_{\Sigma_2}C_2)\rightarrow H^{a+p\sigma+b\epsilon+q\sigma\otimes\epsilon}_{Br,K}(pt,\Z/2)$$
is the zero map for all $a,b\in \mathbb{Z}$ and $p,q\geq 0$. The $H^\st_{Br,K}(pt,\Z/2)$-module map 
$$\tilde{H}^{a+p\sigma+q\sigma\otimes \epsilon}_{Br,K}(\tilde{E}_{\Sigma_2}C_2)\rightarrow H^{a+p\sigma+q\sigma\otimes\epsilon}_{Br,K}(pt,\Z/2)$$
is the zero map for all $a$ and $p,q$ not both negative or $a\leq 3$ and $p,q$ are arbitrary.

\begin{proof} Let $V=a+b\epsilon$ and take $W=a'+b'\epsilon$ with $a',b'\geq 0$ such that $V+W$ is an actual $\Sigma_2$-representation. Denote by $F(X,Y)$ the $\Sigma_2$-space of nonequivariant maps with the conjugacy action for two pointed $\Sigma_2$-spaces $X$ and $Y$. We study the map 
$$\rH^V_{Br,K}(\tilde{E}_{\Sigma_2}C_2)=[\tilde{E}_{\Sigma_2}C_2,F(S^W,M\otimes S^{V+W})]_K\rightarrow \rH^V_{Br,K}(pt_+)=[pt_+,F(S^W,M\otimes S^{V+W})]_K$$ induced by the isotropy map $pt_+\rightarrow \tilde{E}_{\Sigma_2}C_2$. But this last map factors through the map
$$pt_+\hookrightarrow S^\sigma\hookrightarrow S^{\sigma+\sigma\otimes\epsilon}\hookrightarrow \tilde{E}_{\Sigma_2}C_2$$
so the above cohomology map factors through $[S^\sigma,F(S^W,M\otimes S^{V+W})]_K$. But the target has trivial $C_2$-action so the cohomology map factors through
$[S^\sigma/C_2,F(S^W,M\otimes S^{V+W})]_K$. But $S^{\sigma}/C_2=I$, which is contractible, implying that 
$$\rH^V_{Br,K}(\tilde{E}_{\Sigma_2}C_2)=[\tilde{E}_{\Sigma_2}C_2,F(S^W,M\otimes S^{V+W})]_K\rightarrow \rH^V_{Br,K}(I)=[S^\sigma/C_2,F(S^W,M\otimes S^{V+W})]_K=0\rightarrow $$$$\rightarrow \rH^V_{Br,K}(pt_+)=[pt_+,F(S^W,M\otimes S^{V+W})]_K$$
so the topological isotropy map is zero in cohomology for indexes $a+b\epsilon$, $a,b\in \mathbb{Z}$.  The first statement of the theorem is implied by the periodicity of $\tilde{E}_{\Sigma_2}C_2$; i.e. through multiplication by the $x_1$ and $x_3$ classes from the Bredon cohomology ring of a point described above. For example, multiplication with $x_1$ gives
\[
\xymatrixrowsep{0.5in}
\xymatrixcolsep{0.5in}
\xymatrix@-0.9pc{
\rH^{a+b\epsilon}_{Br,C_2}(\tilde{E}_{\Sigma_2}C_2)\ar[r]^{0}\ar[d]^{\stackrel{x_1}{\simeq}}&
H^{a,b} _{Br,C_2}(pt,\Z/2)\ar[d]^{x_1}
\\ 
\rH^{a+\sigma+b\epsilon}_{Br,K}(\tilde{E}_{\Sigma_2}C_2)\ar[r]&
H^{a+\sigma +b\epsilon} _{Br,K}(pt,\Z/2).
}
\]
The last case of the theorem is symmetric in $p$ and $q$. Suppose for example that $p=-n<0$ and $q\geq 0$. By multiplication with $x_3$ as above we can assume that $q=0$. Let $a>3$ (by periodicity and the fact that $\rH^{a+0\sigma+0\epsilon+0\sigma\otimes\epsilon}_{Br,K}(\tilde{E}_{\Sigma_2}C_2)=0$ if $a\leq 3,$ we conclude that $\rH^{a+p\sigma+q\sigma\otimes\epsilon}_{Br,K}(\tilde{E}_{\Sigma_2}C_2)=0$ for any $p,q\in \mathbb{Z},$ so the map is zero). Then
$$\rH^{a-n\sigma}_{Br,C_2}(\tilde{E}_{\Sigma_2}C_2)=[\tilde{E}_{\Sigma_2}C_2,F(S^{n\sigma},M\otimes S^{a})]_K\rightarrow \rH^V_{Br,K}(pt_+)=[pt_+,F(S^{n\sigma},M\otimes S^{a})]_K.$$ 
We describe the action of $C_2\times \Sigma_2$ on $S^{\sigma+\sigma\otimes\epsilon}=S(1\oplus\sigma\oplus\sigma\otimes \epsilon)$. Denote by $a$ the nontrivial element of $C_2$ and $b$ the nontrivial element  of $\Sigma_2$. Then $ab$ is the nontrivial element of the diagonal subgroup $\Delta$. Then $a(x,y,z)=(x,-y,-z)$, $b(x,y,z)=(x,y,-z)$ and $ab(x,y,z)=(x,-y,z)$. When we restrict to the action on $S^{\sigma\otimes\epsilon}=S(1\oplus\sigma\otimes\epsilon)$ then $a(x,0,z)=(x,0,-z)$, $b(x,y,z)=(x,0,-z)$ and $ab(x,0,z)=(x,0,z)$. In conclusion $\Delta$ acts trivially on $S^{\sigma\otimes\epsilon}$. Also $$I=S^{\sigma\otimes\epsilon}/C_2=S^{\sigma\otimes\epsilon}/\Sigma_2=S^{\sigma\otimes\epsilon}/K$$ because $C_2$ and $\Sigma_2$ act in the same way on $S^{\sigma\otimes\epsilon}$. Because $\Sigma_2$ acts trivially on $F(S^{n\sigma},M\otimes S^{a})$ it implies that $C_2$ acts as $\Delta$ on this space. 

Now the map $pt_+\rightarrow \tilde{E}_{\Sigma_2}C_2$ factors as 
$$pt_+\hookrightarrow S^{\sigma\otimes \epsilon}\hookrightarrow S^{\sigma\oplus\sigma\otimes\epsilon}\hookrightarrow \tilde{E}_{\Sigma_2}C_2$$
and consequently we have a factorization of the topological isotropy map 
$$\rH^{a-n\sigma}_{Br,C_2}(\tilde{E}_{\Sigma_2}C_2)\rightarrow [S^{\sigma\otimes\epsilon},F(S^{n\sigma},M\otimes S^{a})]_K\rightarrow \rH^{a-n\sigma}_{Br,C_2}(pt_+).$$
But the homotopy group of pointed maps is zero i.e.$$[S^{\sigma\otimes\epsilon},F(S^{n\sigma},M\otimes S^{a})]_K=[S^{\sigma\otimes\epsilon}/\Sigma_2,F(S^{n\sigma},M\otimes S^{a})]_K=[I,F(S^{n\sigma},M\otimes S^{a})]_K=0.$$
It implies the second statement of the theorem.
\end{proof}
\end{theorem}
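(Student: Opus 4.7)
My plan is to exploit the factorization of the isotropy map $pt_+\to \tilde{E}_{\Sigma_2}C_2$ through low-dimensional representation spheres together with the reduced $\sigma$- and $\sigma\otimes\epsilon$-periodicities of $\tilde{E}_{\Sigma_2}C_2$ inherited from the corresponding motivic periodicities of $\wt\EG C_2$. Since $\tilde{E}_{\Sigma_2}C_2 = \colim_n S^{n\sigma+n\sigma\otimes\epsilon}$, the map factors as
\[
pt_+\hookrightarrow S^\sigma \hookrightarrow S^{\sigma+\sigma\otimes\epsilon}\hookrightarrow \tilde{E}_{\Sigma_2}C_2,
\]
and by symmetry also through $pt_+\hookrightarrow S^{\sigma\otimes\epsilon}$. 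The key geometric fact is that $S^\sigma/C_2 \cong I$ and $S^{\sigma\otimes\epsilon}/\Sigma_2 \cong I$ are contractible, which will convert these factorizations into vanishing statements once I verify that the relevant subgroup acts trivially on the representing function spectrum.

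For the first statement I begin with $p=q=0$. Writing $V=a+b\epsilon$ and picking $W=a'+b'\epsilon$ with $a',b'\geq 0$ so that $V+W$ is an actual representation, reduced cohomology is represented by $[X, F(S^W, H\ul{\Z/2}\wedge S^{V+W})]_K$. Because $V$, $W$ and $V+W$ involve only the trivial and $\epsilon$ characters, the target spectrum carries trivial $C_2$-action, so any $K$-equivariant map from $S^\sigma$ into it factors through $S^\sigma/C_2 = I \simeq \ast$, killing the map. Then the $\sigma$- and $\sigma\otimes\epsilon$-periodicities (multiplication by $x_1$ and $x_3$) propagate the vanishing throughout the region $p,q\geq 0$ via commutative squares whose source verticals are isomorphisms.

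For the second statement I split into cases. If at least one of $p,q$ is nonnegative, then by the symmetry between $\sigma$ and $\sigma\otimes\epsilon$ I may assume $p=-n\leq 0$ and $q\geq 0$, and using $x_3$-periodicity I reduce to $q=0$. When $p\geq 0$ as well the claim is the $b=0$ specialization of the first statement; otherwise $p<0$ and I factor the isotropy map through $pt_+\hookrightarrow S^{\sigma\otimes\epsilon}$ instead. The representation-theoretic computation here shows that the diagonal $\Delta\subset K$ acts trivially on $S^{\sigma\otimes\epsilon}$, and that in the representing object $F(S^{n\sigma}, H\ul{\Z/2}\wedge S^a)$ the subgroup $\Sigma_2$ acts trivially. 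So the $K$-mapping space descends through $S^{\sigma\otimes\epsilon}/\Sigma_2 \cong I \simeq \ast$, again forcing zero.

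For the remaining branch $a\leq 3$ with arbitrary $p,q$, I would observe from the Poincaré series tables of Section \ref{rok} that $\rH^{a}_{Br}(\tilde{E}_{\Sigma_2}C_2)=0$ for $a\leq 3$ when $p=q=0$, and then use $x_1$- and $x_3$-periodicities to spread this vanishing to all $p,q\in\Z$. The main obstacle I anticipate is tracking which subgroup of $K=C_2\times\Sigma_2$ acts trivially on each function spectrum; correctly identifying these trivial actions is what converts the contractibility of $I$ into the desired cohomological vanishing. A secondary concern is ensuring that multiplication by $x_1$ and $x_3$ really does define an isomorphism on the reduced cohomology of $\tilde{E}_{\Sigma_2}C_2$ in the relevant gradings, so that the periodicity propagation is rigorous; this is the Betti realization of the motivic periodicities of $\wt\EG C_2$ cited in the preceding paragraph.
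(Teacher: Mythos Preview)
Your proposal is correct and follows essentially the same argument as the paper: factor the isotropy map $pt_+\to\tilde{E}_{\Sigma_2}C_2$ through $S^\sigma$ (resp.\ $S^{\sigma\otimes\epsilon}$), observe that the relevant subgroup acts trivially on the representing function spectrum so that the map descends through the contractible quotient $S^\sigma/C_2\cong I$ (resp.\ $S^{\sigma\otimes\epsilon}/\Sigma_2\cong I$), and then propagate via the $x_1$- and $x_3$-periodicities of $\tilde{E}_{\Sigma_2}C_2$. One small correction: the vanishing $\rH^{a}_{Br}(\tilde{E}_{\Sigma_2}C_2)=0$ for $a\leq 3$ is not read off from the Poincar\'e series of a point in Section~\ref{rok}, but rather from the computation of $H^{*+*\epsilon}_{Br}(B_{\Sigma_2}C_2)$ (Theorem~\ref{comp1} and Corollary~\ref{comp3}) together with the identification $\rH^{a+b\epsilon}_{Br}(\tilde{E}_{\Sigma_2}C_2)\cong \rH^{a-1+b\epsilon}_{Br}(B_{\Sigma_2}C_2)$.
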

\subsection{$\Sigma_2$-equivariant topological spaces}
Consider the $\Sigma_2$-equivariant topological space
$$B_{\Sigma_2}C_2=E_{\Sigma_2}C_2/C_2=colim_n S(n\sigma+n\sigma\otimes\epsilon)/C_2.$$
It is the realization of $$\BG C_2=\EG C_2/C _2=colim _n (\mathbb{A}(n\sigma)\setminus{0})/C_2,$$ the classifying space of $C_2$ over the field of real numbers.
The $RO(\Sigma_2)$-graded Bredon cohomology of $B_{\Sigma_2}C_2$ is given below:
\begin{theorem} (\cite{Kr},\cite{HK1}) \label{comp1} We have that $$H^{*+*\epsilon} _{Br,C_2}(B _{\Sigma _2}C _2,\Z/2)=H^{*+*\epsilon} _{Br,C_2}(pt,\Z/2)[c,b]/(c^2=x_2 c+y_2b)$$ where $|c|=\epsilon$ and $|b|=1+\epsilon$ and $x_2,y_2\in H^{\st}_{Br,C_2}(pt,\Z/2)$ are the usual classes in the degrees $\epsilon$ and $\epsilon-1$ respectively. 
\end{theorem}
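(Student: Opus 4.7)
The plan is to compute $H^{*+*\epsilon}_{Br}(B_{\Sigma_2}C_2,\Z/2)$ by expressing the space as a colimit of finite-dimensional $\Sigma_2$-equivariant real projective spaces and inducting on the level of the filtration. Writing $B_{\Sigma_2}C_2 = \colim_n B_n$ with $B_n = S(n\sigma + n\sigma\otimes\epsilon)/C_2$, observe that $\sigma$ restricts to the trivial $\Sigma_2$-representation while $\sigma\otimes\epsilon$ restricts to the sign representation $\epsilon$. Hence, as a $\Sigma_2$-space, each $B_n$ is the real projective space $\R P(n\rho_{\Sigma_2})$ associated to $n$ copies of the regular $\Sigma_2$-representation $\rho_{\Sigma_2} = 1+\epsilon$. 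The strategy is then to establish the base case $n=1$ in detail, inductively climb the filtration using an equivariant Gysin sequence, and finally pass to the colimit.

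For the base case $n=1$, the space $B_1 \cong S(1+\epsilon)$ is a topological $S^1$ carrying the reflection $\Sigma_2$-action with two fixed points. Applying Bredon cohomology to the cofiber sequence $S(1+\epsilon)_+ \to S^0 \to S^{1+\epsilon}$ yields a long exact sequence from which one reads off two generators: the class $b \in H^{1+\epsilon}_{Br}(B_1)$ arising as the pullback of the Thom class of $S^{1+\epsilon}$, and the class $c \in H^{\epsilon}_{Br}(B_1)$ arising as the Euler class of the tautological $\Sigma_2$-equivariant real line bundle $L$. A direct computation in $H^{2\epsilon}_{Br}(B_1)$ using the explicit module structure over $H^\star_{Br}(\pt)$ from Theorem \ref{Br} then verifies the quadratic relation $c^2 = x_2 c + y_2 b$; a clean way to nail down the coefficients is to restrict to the fixed set $B_1^{\Sigma_2}$, which consists of the two points $[1:0]$ and $[0:1]$, where the calculation reduces to an elementary statement in $H^\star_{Br}(\pt)$.

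For the inductive step, the inclusion $B_{n-1} \hookrightarrow B_n$ admits a cofiber equivalent to a Thom space of the tautological line bundle on $B_{n-1}$, suspended by a copy of $1+\epsilon$. The associated Gysin sequence, together with the equivariant Thom isomorphism (shifting by the representation $\epsilon$ to account for the non-orientability of the tautological bundle) and the inductive hypothesis, shows that $H^{*+*\epsilon}_{Br}(B_n)$ is the truncation of $H^{*+*\epsilon}_{Br}(\pt)[c,b]/(c^2 - x_2 c - y_2 b)$ by $b^n$. The passage to the colimit is then routine: transition maps are surjective in each fixed bidegree, so $\lim^1$ vanishes and one obtains the full polynomial-type ring.

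The main obstacle I expect is the explicit verification of the quadratic relation in the base case. The $\Sigma_2$-equivariant structure of the tautological line bundle interacts nontrivially with the ambient action on the projective space, so one must carefully expand the Euler class of $L^{\oplus 2}$ in terms of $c$, $b$, and the Bredon cohomology classes $x_2, y_2 \in H^\star_{Br}(\pt)$ described in Section \ref{rok}. Once this identity is established, the inductive structure and the colimit argument go through without further difficulty.
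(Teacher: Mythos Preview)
The paper does not prove this theorem; it is quoted from the literature (the references \cite{Kr} and \cite{HK1}) and used as input for later arguments. There is therefore no ``paper's own proof'' to compare against. In fact, the paper later invokes exactly the finite truncated version of this result, citing \cite{Kr}, Theorem~4.11:
\[
H^{*+*\epsilon}_{Br}\bigl(\R P(q+q\epsilon),\Z/2\bigr)\;\simeq\; H^{*+*\epsilon}_{Br}(\pt)[c,b]\big/\bigl(c^2=x_2c+y_2b,\ b^q=0\bigr).
\]

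Your proposal is essentially the standard argument that appears in those references: identify $B_n = S(n\sigma+n\sigma\otimes\epsilon)/C_2$ with the $\Sigma_2$-equivariant projective space $\R P(n\rho_{\Sigma_2})$, run the equivariant Gysin sequence up the filtration, name $c$ as the Euler class of the tautological line bundle and $b$ as the class coming from the Thom class, and pass to the colimit. This is correct in outline and is exactly the strategy of the cited sources.

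One technical point to tighten: in the inductive step you describe the cofiber of $B_{n-1}\hookrightarrow B_n$ as ``a Thom space of the tautological line bundle on $B_{n-1}$, suspended by a copy of $1+\epsilon$.'' Since you are adding the two-dimensional representation $\rho=1+\epsilon$ in a single step, the cofiber is not literally a suspension of a line-bundle Thom space. The clean way to run the induction is to refine the filtration by adding one irreducible at a time (a trivial summand, then a sign summand), so that each successive cofiber is genuinely the Thom space of an equivariant line bundle and the Gysin sequence applies directly. With that refinement, and with the fixed-point check you outline to pin down the relation $c^2=x_2c+y_2b$, the argument goes through.
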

The motivic cohomology of $\BG C_2$ over the field of real numbers is computed below:
\begin{theorem} (\cite{Voc}) \label{comp2} We have that 
 $$H^{*,*}(\mathbf{B}C _2,\Z/2)=H^{*,*}(\R,\Z/2)[s,t]/(s^2=\tau t+\rho s)$$ where $|s|=(1,1)$ and $|t|=(2,1)$, $\tau\in H^{0,1}(\R,\Z/2)=\Z/2$ and $\rho$ is the class of $[-1]\in H^{1,1}(\R,\Z/2)=\Z/2$. 
 \end{theorem}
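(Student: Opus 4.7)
The plan is to compute $H^{*,*}(\BG C_2, \Z/2)$ via the geometric model $\BG C_2 \simeq \colim_n X_n$ with $X_n := (\A(n\sigma) \setminus 0)/C_2$, and to perform the computation inductively on $n$. Since mod-$2$ motivic cohomology converts the relevant filtered colimit of closed cellular inclusions into an inverse limit, the theorem reduces to giving compatible additive generators on every $X_n$ together with a uniform multiplicative relation that survives to the limit. I first identify the two ring generators geometrically: let $s \in H^{1,1}(\BG C_2, \Z/2)$ be the characteristic class of the nontrivial homomorphism $C_2 \to \mu_2$, and let $t \in H^{2,1}(\BG C_2, \Z/2)$ be the motivic first Chern class of the line bundle $\A(\sigma) \times_{C_2} \A(\sigma) \to \BG C_2$; equivalently, $t$ is pulled back from the canonical generator of $H^{*,*}(\P^\infty, \Z/2) = H^{*,*}(\R)[t]$ along the map $\BG C_2 \to B\G_m = \P^\infty$ induced by $C_2 \hookrightarrow \G_m$.

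The inductive engine is the cofiber sequence
\[
X_n \hookrightarrow X_{n+1} \to \mathrm{Th}(L_n),
\]
arising from the codimension-one closed inclusion $\A(n\sigma)\setminus 0 \hookrightarrow \A((n+1)\sigma)\setminus 0$ and Morel--Voevodsky purity, where $L_n$ is the line bundle on $X_n$ associated to the sign representation. The motivic Thom isomorphism $H^{*-2,*-1}(X_n) \cong \tilde{H}^{*,*}(\mathrm{Th}(L_n))$ combined with induction yields the additive presentation $H^{*,*}(X_n) \cong H^{*,*}(\R)\{1, s, t, st, \dots, t^{n-1}, s t^{n-1}\}$; passing to the colimit in $n$ removes the truncation and gives the free polynomial description on $s$ and $t$ as an $H^{*,*}(\R)$-module.

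The main obstacle is to identify the multiplicative relation $s^2 = \tau t + \rho s$. I would verify it already on the smallest nontrivial model $X_1$: the $C_2$-quotient of $\A^1 \setminus 0$ under $x \mapsto -x$ is, via the squaring map, identified with $\G_m$, and the restriction of $s$ to $X_1$ recovers the tautological $K^M_1$-class. The Steinberg-type identity $[x]\cdot[x] = [-1]\cdot[x]$ in Milnor K-theory then produces the $\rho s$ contribution. The coefficient of $\tau t$ is pinned down by comparison with the algebraically closed case: base change along $\mathrm{Spec}\,\C \to \mathrm{Spec}\,\R$ sends $\rho$ to $0$, while over $\C$ the corresponding relation $s^2 = \tau t$ is the classical Voevodsky identity (used in \cite{HOV2} and recalled in Section~1). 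Faithfulness of the base change on the weight-one summand supporting $\tau t$ transfers the $\C$-relation back to $\R$ with the correct coefficient. Once the relation is verified on $X_1$, it propagates through the Gysin long exact sequences to every $X_n$ and hence to the colimit, yielding the presentation asserted in the theorem.
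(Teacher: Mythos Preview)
The paper does not give its own proof of this statement: Theorem~\ref{comp2} is quoted from Voevodsky \cite{Voc} and used as input, with only a remark afterward specifying the choice of generators (namely, $t$ is the Euler class of the tautological line bundle and $s$ is the unique class in $H^{1,1}$ restricting to zero at the basepoint with $\delta(s)=t$ under the Bockstein). So there is nothing in the paper to compare your argument against directly.

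That said, your outline is essentially Voevodsky's own strategy in \cite{Voc}: filter $\BG C_2$ by the quotients $X_n=(\A(n\sigma)\setminus 0)/C_2$, use the purity/Gysin cofiber sequence together with the Thom isomorphism to get the additive structure inductively, and then pin down the single relation in bidegree $(2,2)$. One caution: your step ``verify $s^2=\tau t+\rho s$ already on $X_1$'' does not quite work as stated, because on $X_1\cong\G_m$ the bundle underlying $t$ trivializes, so $t|_{X_1}=0$ and the restriction cannot see the $\tau t$ term at all. What the restriction to $X_1$ actually buys you is the vanishing of any spurious $\rho^2$ contribution and the coefficient of $\rho s$ (via $[x]\cdot[x]=[-1]\cdot[x]$); the $\tau t$ coefficient then has to come from your second maneuver, base change to $\C$, where $\rho=0$ and the relation collapses to $s^2=\tau t$. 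You should also check that your description of $s$ (``characteristic class of $C_2\to\mu_2$'') agrees with the paper's Bockstein characterization, since it is the latter that makes the restriction $s|_{X_1}=[x]$ and the identity $\delta(s)=t$ transparent.
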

 
 It is obvious that the realization map 
$$H^{a,b}(\mathbf{B}C _2,\Z/2)\to H^{a-b+b\epsilon} _{Br,C_2}(B _{\Sigma _2}C _2,\Z/2)$$ is an isomorphism for any $a\in \Z$ such that  $a\leq 2b$. The realization map sends
$$s\rightarrow c,$$
$$t\rightarrow b,$$
$$\tau\rightarrow y_2,$$
$$\rho\rightarrow x_2.$$
The way we choose the generator $s$ in Theorem \ref{comp2} is the following: it is the unique element $s\in H^{1,1}(\mathbf{B}C_2,\Z/2)$ such that the restriction to $H^{1,1}(pt,\Z/2)$ is zero and the Bockstein homomorphism $\delta: \rH^{1,1}(-,\Z/2)\rightarrow \rH^{2,1}(-,\Z)$ sends $\delta(s)=t$. We choose $t$ to be the Euler class of the line bundle on $\BG C_2$ corresponding to the sign representation of $C_2$.
 
%Notice that both groups in Theorems \ref{comp1} and \ref{comp2} are zero if $b<0$ and $a\leq 2b$. This is because $$H^{a+b\epsilon} _{Br}(B _{\Sigma _2}C _2,\Z/2)=0$$ if $a\leq b<0$. We can see that the generators of the $\Z/2$-vector space from Theorem \ref{comp1} are $x_2^ny_2^mc^pb^q$ with bidegree $(q-m,n+m+p+q)$ and $\frac{\theta_2}{x_2^ny_2^m}c^pb^q$ with bidegree $(2+m+q,-2-n-m+q+p)$ in the notations of Theorem \ref{Br}. Given the relation, we can always choose $p=0$ or $p=1$. Notice that the first case has the second index always positive. For the second case, the relation implies that $2+m+q\leq -2-n-m+q+p<0$.  This means that $q+2m+n+4\leq p+q<m+n+2$. It implies that $q+m+2<0$, which is false because $q,m\geq 0$.  We also notice that $H^{a} _{Br}(B _{\Sigma _2}C _2,\Z/2)=0$ for $a\leq 2$. It implies 
%that $\rH^{a+0\sigma+0\epsilon+0\sigma\otimes\epsilon} _{Br}(\tilde{E}_{\Sigma_2}C_2,\Z/2)=0$ if $a\leq 3$. Moreover we have: 
\begin{lemma}
	\label{gap}
	We have that
	\begin{align*}
		H^{a+b\epsilon}_{Br,C_2}(B_{\Sigma_2}C_2,\Z/2) = 0
	\end{align*}
	when $b= a-1$ or $b= a-2.$
\end{lemma}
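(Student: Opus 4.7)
The plan is to read off the vanishing directly from the ring presentation of Theorem~\ref{comp1}. Setting $A := H^{*+*\epsilon}_{Br}(\mathrm{pt},\Z/2)$, the isomorphism
\[
H^{*+*\epsilon}_{Br}(B_{\Sigma_2}C_2,\Z/2) \cong A[c,b]/(c^2 - x_2 c - y_2 b)
\]
exhibits the ring as a free $A[b]$-module of rank two with basis $\{1, c\}$, since $c^2$ reduces to an $A[b]$-linear combination of $1$ and $c$. Combined with the decomposition $A = \Z/2[x_2, y_2] \oplus NC$ from Theorem~\ref{Br}, where $NC$ is spanned by $\theta_2/(x_2^n y_2^m)$ for $n,m \geq 0$, this reduces the lemma to a Diophantine check on the four families of $\Z/2$-basis monomials.

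Next I would record the $RO(\Sigma_2)$-degrees using $|x_2| = \epsilon$, $|y_2| = -1+\epsilon$, $|\theta_2| = 2-2\epsilon$, $|b| = 1+\epsilon$, $|c| = \epsilon$. A positive-cone monomial $x_2^i y_2^j b^\ell$ has bidegree $(\ell - j,\, i+j+\ell)$, and multiplying it by $c$ shifts the second coordinate by one. A negative-cone monomial $(\theta_2/(x_2^n y_2^m))\, b^\ell$ has bidegree $(2+m+\ell,\, -2-n-m+\ell)$, and again multiplying by $c$ shifts the second coordinate by one. Here $i,j,\ell,n,m$ range over the non-negative integers.

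The core step is then to impose ``bidegree $= (a, a-k)$'' for $k \in \{1,2\}$ and eliminate $a$ and $\ell$ by subtracting the two coordinate equations. In the four families this forces respectively $i+2j = -k$, $i+2j = -k-1$, $n+2m = k-4$, or $n+2m = k-3$. For $k \in \{1,2\}$ each right-hand side lies in $\{-3,-2,-1\}$, hence is strictly negative, while each left-hand side is a sum of non-negative integers. Thus no basis monomial survives in bidegree $(a, a-1)$ or $(a, a-2)$, yielding the lemma.

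The proof is essentially bookkeeping, and I expect the only pitfall to be a sign or indexing error when recording the four bidegree formulas from Section~\ref{rok}. There is no conceptual obstacle: the relation $c^2 = x_2 c + y_2 b$ already has the right form to give a rank-two free module structure over $A[b]$, and the negative cone contributes an independent family of monomials whose bidegrees are easy to enumerate.
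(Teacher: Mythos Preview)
Your proof is correct and follows essentially the same approach as the paper: both arguments use the presentation of Theorem~\ref{comp1} to reduce to an $A[b]$-basis $\{1,c\}$ and then check that no monomial of the form $x_2^i y_2^j b^\ell c^\varepsilon$ or $(\theta_2/x_2^n y_2^m) b^\ell c^\varepsilon$ lands in bidegree $(a,a-1)$ or $(a,a-2)$. The paper phrases this geometrically---the positive-cone monomials lie on or above the diagonal $b=a$ (bounded by $b^n$) and the negative-cone monomials lie on or below $b=a-3$ (bounded by $\theta_2 c b^n$)---while you carry out the equivalent Diophantine elimination explicitly; the content is identical.
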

\begin{proof}
	We know that
	\begin{align*}
		H^{a+b\epsilon}_{Br,C_2}(B_{\Sigma_2}C_2,\Z/2)  = \frac{H^{a+b\epsilon}_{Br,C_2}(pt,\Z/2) [c,b]}{\left(c^2=y_2b+x_2c\right)},
	\end{align*}
	so every generator can be obtained by multiplying an element $ \alpha \in H^\st_{Br,C_2}(pt,\Z/2)$ with either $b^n$ or $cb^n.$ If $\alpha$ is in the positive cone, the possibilities are bounded by the products $b^n$ along the line $a=b,$ and if $\alpha$ is in the negative cone, the possibilities are bounded by the products $\theta_2 c b^n,$ along the line $b=a-3.$ This is shown in the diagram below.
\begin{figure}[H]

\begin{tikzpicture}
				[%%%%%%%%%%%%%%%%%%%%%%%%%%%%%%
				dot/.style={circle,draw=black, fill,inner sep=1pt},
				]%%%%%%%%%%%%%%%%%%%%%%%%%%%%%%
				
				\foreach \y in {0, ..., 4}
				\foreach \x in {0, ..., \y} {
					\node[dot] at (-\x,\y){};
				}
				
				\foreach \y in {0, ..., 2}
				\foreach \x in {0, ..., \y} {
					\node[dot] at (\x+2,-\y-2){};
				}
				
				\node[dot] at (2,-1){};
				
				\foreach \x in {-4,...,4}
				\draw (\x,-5.1) -- node[below,yshift=-1mm] {\x} (\x,-4.9);

				\foreach \y in {-4,...,4}
				\draw (-4.9,\y) -- node[below,yshift=2.5mm,xshift=-3mm] {\y} (-5.1,\y);

				\draw[->,thick,-latex] (-5,-5.3) -- (-5,5);
				\draw[->,thick,-latex] (-5.3,-5) -- (5,-5);
				
				\draw[->,thick,-latex] (0,0) -- (4,4);
				\draw[->,thick,-latex] (2,-1) -- (5,2);
				
				\draw[->,dashed,-latex] (-2,-3) -- (5,4);
				\draw[->,dashed,-latex] (-2,-4) -- (5,3);
				
				\node[below,xshift=4mm,yshift=1mm] at (5,2) {$\theta_2 cb^n$};
				\node[below,xshift=3mm,yshift=1mm] at (4,4) {$b^n$};

				\node[below,yshift=-1mm] at (5,-5) {$a$};
				\node[below,xshift=-2mm] at (-5,5) {$b$};  
				
				\node[below,xshift=3mm,yshift=3mm] at (2,5) {${\displaystyle H_{Br,C_2}^{a,b}(B_{\Sigma_2}C_2;\Z_2)} $};

				\node[below,xshift=3mm,yshift=1mm] at (0,0) {$1$};
				
				\node[below,xshift=3mm,yshift=2mm] at (0,1) {$x_2$};
				\node[below,xshift=3mm,yshift=2mm] at (-1,1) {$y_2$};
				
				\foreach \x in {2,...,4}
				\node[below,xshift=3mm,yshift=3mm] at (0,\x) {$x_2^\x$};

				\node[below,xshift=5mm,yshift=1mm] at (-1,2) {$y_2x_2$};
				\foreach \x in {2,...,3}
				\node[below,xshift=5mm,yshift=3mm] at (-1,\x+1) {$y_2x_2^\x$};
				
				\node[below,xshift=3mm,yshift=3mm] at (-2,2) {$y_2^2$};
				\node[below,xshift=5mm,yshift=3mm] at (-2,3) {$y_2^2x_2$};
				\node[below,xshift=5mm,yshift=3mm] at (-2,4) {$y_2^2x_2^2$};
				
				\node[below,xshift=3mm,yshift=3mm] at (-3,3) {$y_2^3$};
				\node[below,xshift=5mm,yshift=3mm] at (-3,4) {$y_2^3x_2$};
				
				\node[below,xshift=3mm,yshift=3mm] at (-4,4) {$y_2^4$};
				
				%--------------------------------
				
				\node[below,xshift=4mm,yshift=3mm] at (2,-1) {$\theta_2 c$};
				
				\node[below,xshift=3mm,yshift=3mm] at (2,-2) {$\theta_2$};
				
				\node[below,xshift=3mm,yshift=3mm] at (2,-3) {$\frac{\theta_2}{x_2}$};
				\node[below,xshift=3mm,yshift=3mm] at (2,-4) {$\frac{\theta_2}{x_2^2}$};
				
				\node[below,xshift=3mm,yshift=3mm] at (3,-3) {$\frac{\theta_2}{y_2}$};
				\node[below,xshift=4mm,yshift=3mm] at (3,-4) {$\frac{\theta_2}{y_2x_2}$};
				
				\node[below,xshift=3mm,yshift=3mm] at (4,-4) {$\frac{\theta_2}{y_2^2}$};
				
				%							\node[below,xshift=3mm,yshift=3mm] at (6,3) {${\displaystyle\bigoplus_{p,q \in \Z} H_{C_2}^{p,q}(pt;\underline{\Z_2})} \simeq \Z_2[x,y] \oplus \displaystyle\bigoplus_{i,j\geq 0} \Z_2\left\{\frac{\theta}{x^iy^j}\right\}$};

			\end{tikzpicture}
		\caption{}
		\label{TEST3}
	\end{figure}
	
\end{proof}
\begin{corollary}\label{comp3} If $a\leq b+2, b< 0$ then $$H^{a,b} _{Br,C_2}(B _{\Sigma _2}C _2,\Z/2)=0.$$ 
If $b=0$ then $$\rH^{a} _{Br,C_2}(B_{\Sigma_2}C_2,\Z/2)=0$$ if $a\leq 2$.
\end{corollary}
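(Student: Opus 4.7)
The plan is to read the claimed vanishing directly off the explicit ring presentation in Theorem \ref{comp1}, reusing the bookkeeping of bidegrees illustrated in the proof of Lemma \ref{gap}. Every element of $H^{*+*\epsilon}_{Br}(B_{\Sigma_2}C_2,\Z/2)$ is a sum of monomials of the form $\alpha\cdot b^n$ or $\alpha\cdot cb^n$ with $n\geq 0$, where $\alpha$ is a positive cone generator $x_2^i y_2^j$ or a negative cone generator $\theta_2/(x_2^i y_2^j)$ of $H^{*+*\epsilon}_{Br}(pt,\Z/2)$. Since $|b|=(1,1)$ and $|c|=(0,1)$, it therefore suffices to check that no such monomial has bidegree in the claimed vanishing region.

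For the first statement, assume $b<0$ and $a\leq b+2$. A positive cone generator $x_2^i y_2^j$ sits in bidegree $(-j,\,i+j)$ with second coordinate $i+j\geq 0$, and both $b$ and $cb$ have non-negative $\epsilon$-coordinates; hence every positive cone monomial has $\epsilon$-degree $\geq 0$, ruling out contributions with $b<0$. A negative cone generator $\theta_2/(x_2^i y_2^j)$ has bidegree $(2+j,\,-2-i-j)$, and therefore satisfies $a-b=4+i+2j\geq 4$. Multiplication by $b^n$ preserves $a-b$, while multiplication by $cb^n$ decreases it by exactly $1$; thus every negative cone monomial satisfies $a-b\geq 3$. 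Our hypothesis $a\leq b+2$ gives $a-b\leq 2$, so no negative cone monomial contributes either, and the group vanishes.

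For the second statement, fix $b=0$. Matching $(-j+n,\,n+i+j)=(a,0)$ for a positive cone $b^n$-monomial forces $n=i=j=0$ and hence $a=0$, giving only the unit class; the family $\alpha\cdot cb^n$ has $\epsilon$-coordinate $n+i+j+1\geq 1$ and so never hits $b=0$. For negative cone monomials, a direct bidegree calculation shows that $\theta_2/(x_2^i y_2^j)\cdot b^n$ reaches $b=0$ only at $a=4+i+2j\geq 4$, and $\theta_2/(x_2^i y_2^j)\cdot cb^n$ only at $a=3+i+2j\geq 3$. Hence for $a\leq 2$ the only possible contribution at $b=0$ is the unit at $(0,0)$, which is killed in reduced cohomology. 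The argument as a whole is essentially bookkeeping on bidegrees, already summarized pictorially in Figure \ref{TEST3}; the only mild subtlety is tracking the extra $+1$ in the $\epsilon$-coordinate contributed by the class $c$.
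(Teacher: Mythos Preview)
Your proof is correct and follows exactly the approach the paper intends: the corollary is stated without proof immediately after Lemma~\ref{gap}, and is meant to be read off the explicit presentation of Theorem~\ref{comp1} and the bidegree bookkeeping summarized in Figure~\ref{TEST3}. You have simply written out that bookkeeping in full detail, which is precisely what the paper leaves implicit.
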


In conclusion we have
\begin{corollary} \label{cr} The realization maps are isomorphisms
$$H^{a,b}(\mathbf{B}C _2,\Z/2)\simeq H^{a-b+b\epsilon} _{Br,C_2}(B _{\Sigma _2}C _2,\Z/2),$$ for any $a,b\in \Z$ such that  $a\leq 2b$.
\end{corollary}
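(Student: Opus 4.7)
The plan is to compare the two ring presentations directly and reduce the statement to Voevodsky's coefficient-ring isomorphism (Proposition \ref{classic}), using the vanishing in Lemma \ref{gap} to control the range.

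First, I would observe that Betti realization is a ring homomorphism, and by the identifications $\rho \mapsto x_2$, $\tau \mapsto y_2$, $s \mapsto c$, $t \mapsto b$ recorded just after Theorem \ref{comp2}, the motivic relation $s^2 = \tau t + \rho s$ is carried to the Bredon relation $c^2 = y_2 b + x_2 c$. Consequently both rings are free modules over their respective coefficient rings $H^{*,*}(\R,\Z/2)$ and $H^{*+*\epsilon}_{Br}(\pt,\Z/2)$ on the matching bases $\{t^j,\, s t^j\}_{j \geq 0}$ and $\{b^j,\, c b^j\}_{j \geq 0}$, and realization respects this decomposition summand-by-summand.

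Next, in motivic bidegree $(a,b)$ and Bredon bigrading $(a-b) + b\epsilon$, the realization map becomes the direct sum, over $i \in \{0,1\}$ and $j \geq 0$, of realization maps
\[
H^{a-i-2j,\,b-i-j}(\R,\Z/2) \longrightarrow H^{(a-b-j)+(b-i-j)\epsilon}_{Br}(\pt,\Z/2).
\]
By Proposition \ref{classic}, each summand-wise realization is an isomorphism whenever $b - i - j \geq 0$ (in which case the Bredon target lies in the positive cone), and the motivic source vanishes whenever $b - i - j < 0$. Therefore the only obstruction to the total realization being an isomorphism is the possible survival of a negative-cone class in the Bredon target for some $(i,j)$ where the motivic summand is zero.

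The main step is to rule out these negative-cone contributions under the hypothesis $a \leq 2b$. A negative-cone generator $\theta_2/(x_2^n y_2^m)$ in the $(i,j)$-summand requires simultaneously $a - b - j \geq 2$ and $b - i - j \leq -2$, forcing $a \geq 2b + 4 - i \geq 2b + 3$. Hence for $a \leq 2b$ (and indeed already for $a \leq 2b + 2$) every summand-wise realization is an isomorphism, so the total map is as well. This is also exactly the content of Lemma \ref{gap} and Corollary \ref{comp3} applied in bigrading $(a-b,\, b)$. The main obstacle is purely organizational: verifying that the basis decompositions match summand-wise under the ring map, after which Voevodsky's theorem and the gap lemma close the argument.
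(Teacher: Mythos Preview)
Your argument is correct and follows essentially the same approach as the paper: both rings have explicit presentations (Theorems \ref{comp1} and \ref{comp2}) as free modules over their coefficient rings on the matching bases $\{t^j, st^j\}$ and $\{b^j, cb^j\}$, realization carries one presentation to the other, and the summand-wise comparison reduces to Proposition \ref{classic} together with the negative-cone gap of Lemma \ref{gap}. The paper simply declares this ``obvious'' from the generator correspondence and records the corollary, whereas you have spelled out the index bookkeeping and the bound $a\geq 2b+3$ for a possible obstruction; in particular your observation that the isomorphism already holds for $a\leq 2b+2$ is consistent with how the paper later uses this (e.g.\ in Proposition \ref{2qEG}).
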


We have that
 $$H^{*,*}(\mathbf{B}C _2,\Z/2)\simeq \Z/2[\tau,\rho][s,t]/(s^2=\tau t+\rho s),$$
 because the motivic cohomology of the real numbers agrees with the positive cone of the $RO(C _2)$-graded Bredon cohomology of a point (see Proposition \ref{classic}).
 
  Looking to Figure \ref{TEST3}, we see that this ring is represented by elements in the upper cone starting from the diagonal line given by the powers of $b$. The lower cone that is in the lower half plane $a>b$ is not included in the image of the realization maps from Corollary \ref{cr}.
 
 Considering $W_q=A(q\sigma)\setminus\{0\}$, $q>0$, which has a free $C_2$-action, we have that $H^{*,*}_{C_2}(W_q,\Z/2)$ is generated over the motivic cohomology of a point by $st^i,t^i$ with $0\leq i\leq q-1$ because of \cite{Voc} and Proposition \ref{v2n}.  Thus we have  over a field $k$ of characteristic zero
 $$H^{*,*}_{C_2}(W_q,\Z/2)\simeq H^{*,*}(k)[s,t]/(s^2=\tau t+\rho s,t^q)$$
 and the target of the realization maps over reals is in
 $$H^{*,*}_{Br,C_2}(P(\R^{q+q\sigma}),\Z/2)\simeq H^{*,*}_{Br,C_2}(pt)[c,b]/(c^2=x_2c+y_2b,b^q=0)$$
 computed in [\cite{Kr},Theorem 4.11]. In conclusion the realization maps are isomorphisms  
 $$H^{a,b} _{C_2}(W_q,\Z/2)\simeq H^{a-b+b\epsilon} _{Br,C_2}(S(q\sigma+q\sigma\otimes\epsilon)/C_2,\Z/2),$$ for any $a,b\in \Z$ such that  $a\leq 2b$. 
  \begin{figure}[H]
 	
 	\begin{tikzpicture}
 		[%%%%%%%%%%%%%%%%%%%%%%%%%%%%%%
 		dot/.style={circle,draw=black, fill,inner sep=1pt},
 		]%%%%%%%%%%%%%%%%%%%%%%%%%%%%%%
 		
 		\foreach \y in {0, ..., 4}
 		\foreach \x in {0, ..., \y} {
 			\node[dot] at (-\x,\y){};
 		}

 		\foreach \x in {-4,...,4}
 		\draw (\x,-0.1) -- node[below,yshift=-1mm] {\x} (\x,0.1);

 		\foreach \y in {0,...,4}
 		\draw (-4.9,\y) -- node[below,yshift=2.5mm,xshift=-3mm] {\y} (-5.1,\y);

 		\draw[->,thick,-latex] (-5,0) -- (-5,5);
 		\draw[->,thick,-latex] (-5,0) -- (5,0);
 		
 		\draw[->,thick,-latex] (0,0) -- (4,4);

 		\node[below,xshift=3mm,yshift=1mm] at (4,4) {$b^{q-1}$};

 		\node[below,yshift=-1mm] at (5,0) {$a$};
 		\node[below,xshift=-2mm] at (-5,5) {$b$};  
 		
 		\node[below,xshift=3mm,yshift=3mm] at (2,5) {${\displaystyle Re(H^{*,*}_{C_2}(W_q,\Z/2))\subset H^{*,*}_{Br,C_2}(P(\R^{q+q\sigma}),\Z/2)}$};

 		\node[below,xshift=1mm,yshift=6mm] at (0,0) {$1$};
 		
 		\node[below,xshift=3mm,yshift=2mm] at (0,1) {$x_2$};
 		\node[below,xshift=3mm,yshift=2mm] at (-1,1) {$y_2$};
 		
 		\foreach \x in {2,...,4}
 		\node[below,xshift=3mm,yshift=3mm] at (0,\x) {$x_2^\x$};

 		\node[below,xshift=5mm,yshift=1mm] at (-1,2) {$y_2x_2$};
 		\foreach \x in {2,...,3}
 		\node[below,xshift=5mm,yshift=3mm] at (-1,\x+1) {$y_2x_2^\x$};
 		
 		\node[below,xshift=3mm,yshift=3mm] at (-2,2) {$y_2^2$};
 		\node[below,xshift=5mm,yshift=3mm] at (-2,3) {$y_2^2x_2$};
 		\node[below,xshift=5mm,yshift=3mm] at (-2,4) {$y_2^2x_2^2$};
 		
 		\node[below,xshift=3mm,yshift=3mm] at (-3,3) {$y_2^3$};
 		\node[below,xshift=5mm,yshift=3mm] at (-3,4) {$y_2^3x_2$};
 		
 		\node[below,xshift=3mm,yshift=3mm] at (-4,4) {$y_2^4$};

 	\end{tikzpicture}
 	\caption{}
 	\label{TEST6}
 \end{figure}

 In conclusion, the Bredon motivic cohomology in integer indexes of $W_q$ is represented in Figure \ref{TEST3} as the truncation of the upper half plane along the line $(q,b)$ with $b\geq 0$ with the realization described in Figure \ref{TEST6}. 
 
 The following proposition implies $t^q=0$ for dimension reasons. Notice that if $k<n$ then $t^k\in H^{2k,k}_{C_2}(W_q,\Z/2)=H^{2k,k}_{C_2}(\BG C_2,\Z/2)$ is nonzero from \cite{Voc}.
  \begin{proposition} \label{v2n} $$H^{2b,b}_{C_2}(W_{-q},\Z/2)=0$$ for $b\geq -q>0$ and any field $k$ of characteristic zero. 
  \end{proposition}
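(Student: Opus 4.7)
The plan is to reduce the equivariant computation to ordinary motivic cohomology and then induct on $q$. Since $C_2$ acts freely on $W_q=\A(q\sigma)\setminus\{0\}$ for $q>0$, one has $H^{a,b}_{C_2}(W_q,\Z/2)\simeq H^{a,b}(W_q/C_2,\Z/2)$ as the ordinary motivic cohomology of the smooth quasi-projective $k$-scheme $W_q/C_2$ of dimension $q$. The goal becomes to show that $H^{2b,b}(W_q/C_2,\Z/2)=0$ for every $b\geq q$.

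For the base case $q=1$, the squaring map $x\mapsto x^{2}$ identifies $W_1/C_2\simeq\G_m$ over any field of characteristic $\neq 2$. The standard decomposition $H^{*,*}(\G_m,\Z/2)\simeq H^{*,*}(k,\Z/2)\oplus H^{*-1,*-1}(k,\Z/2)\cdot\theta$ with $|\theta|=(1,1)$ gives
\[
H^{2b,b}(\G_m,\Z/2)\;=\;H^{2b,b}(k,\Z/2)\oplus H^{2b-1,b-1}(k,\Z/2)\;=\;0
\]
for every $b\geq 1$, since $H^{a,b}(k,\Z/2)=0$ whenever $a>b$.

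For the inductive step, consider the codimension-one closed immersion $i\colon W_q/C_2\hookrightarrow W_{q+1}/C_2$ induced by the $C_2$-equivariant embedding of $W_q$ into $W_{q+1}$ as the hyperplane $\{x_{q+1}=0\}$. Set $U:=(W_{q+1}\setminus W_q)/C_2$. The change of variables $y_i=x_i/x_{q+1}$ for $1\leq i\leq q$ and $z=x_{q+1}$ identifies $W_{q+1}\setminus W_q\simeq \A^q\times\G_m$ equivariantly, with trivial $C_2$-action on the $\A^q$-factor and the sign action on the $\G_m$-factor. Hence $U\simeq \A^q\times(\G_m/C_2)\simeq \A^q\times\G_m$, and by $\A^1$-invariance together with the base case, $H^{2b,b}(U,\Z/2)=H^{2b,b}(\G_m,\Z/2)=0$ for every $b\geq 1$.

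The Gysin long exact sequence for this smooth pair of codimension one,
\[
H^{2b-2,b-1}(W_q/C_2,\Z/2) \xrightarrow{i_*} H^{2b,b}(W_{q+1}/C_2,\Z/2) \xrightarrow{j^*} H^{2b,b}(U,\Z/2),
\]
then sandwiches the middle term between two zeros whenever $b\geq q+1$: the left vanishes by the inductive hypothesis applied at $b-1\geq q$, and the right by the previous step. Exactness forces $H^{2b,b}(W_{q+1}/C_2,\Z/2)=0$ for $b\geq q+1$, completing the induction. The main point to verify carefully is the identification of $(W_{q+1}\setminus W_q)/C_2$ and the smoothness of the closed embedding $W_q/C_2\hookrightarrow W_{q+1}/C_2$ (which follows from freeness of the $C_2$-action and the \'etale-local triviality of the quotient); both are direct checks that do not depend on the particular field $k$ of characteristic zero.
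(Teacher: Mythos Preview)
Your argument is correct and takes a genuinely different route from the paper's proof.

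The paper argues inside equivariant motivic cohomology: it uses the cofiber sequence $W_{-q,+}\to S^0\to T^{-q\sigma}$ to identify $H^{2b,b}_{C_2}(W_{-q})$ with $\rH^{2b+1,b}_{C_2}(T^{-q\sigma})=H^{2b+1+2q\sigma,b+q\sigma}_{C_2}(k)$, and then kills the latter via the motivic isotropy sequence for $\EG C_2$ together with the multiplicative relation $\kappa_2 t=x_1x_3$ (which forces a certain comparison map $H^{2(b+q)\sigma+2b,(b+q)\sigma+b}_{C_2}(k)\to H^{\ldots}_{C_2}(\EG C_2)$ to vanish). This approach is intrinsic to the Bredon motivic formalism and uses the specific classes $x_1,x_3,\kappa_2,t$ that appear elsewhere in the paper.

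Your approach instead exits the equivariant world immediately via the free-action isomorphism $H^{*,*}_{C_2}(W_n)\simeq H^{*,*}(W_n/C_2)$, and then runs an elementary induction on $n$ using the Gysin sequence for the smooth codimension-one pair $W_n/C_2\hookrightarrow W_{n+1}/C_2$. The key identification $(W_{n+1}\setminus W_n)/C_2\simeq \A^n\times\G_m$ via $y_i=x_i/x_{n+1}$ is correct (the sign action becomes trivial on the $y_i$), and the vanishing $H^{2b,b}(\G_m)=0$ for $b\geq 1$ is immediate from $H^{a,b}(k)=0$ for $a>b$. The only point requiring care, as you note, is that $W_n/C_2\hookrightarrow W_{n+1}/C_2$ is a smooth closed immersion; this follows because the quotient map $W_{n+1}\to W_{n+1}/C_2$ is a degree-two \'etale cover and $W_n\subset W_{n+1}$ is $C_2$-stable, so the induced map on quotients is \'etale-locally the original smooth closed immersion.

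Your proof is more self-contained and avoids the equivariant periodicity classes entirely; the paper's proof, on the other hand, fits naturally into the ambient computation and simultaneously establishes the identity $\kappa_2 t=x_1x_3$, which is used again later.
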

  \begin{proof} Because $W_{-q}$ has free $C_2$-action we have that $$H^{2b,b}_{C_2}(W_{-q},\Z/2)\simeq H^{2b,b}(W_{-q}/C_2,\Z/2)=0$$
  from the vanishing bounds of motivic cohomology because $2b>b+(-q)=b+dim(W_{-q}/C_2)$ (see \cite{MVW}). 
 \end{proof}
\section{The case $b+q<0.$}
In this section we discuss the realization maps from the region $b+q<0$ of Figure \ref{TEST1}. The main results of the section are given in Corollary \ref{itilde} and Theorem \ref{ncinj}.

The following proposition shows that in the region $b+q<0$ the Bredon motivic cohomology of real numbers identifies with  the reduced Bredon motivic cohomology of $\wt\EG C_2$.
\begin{proposition} \label{tilde} If $b\geq 0$ and $b+q<0$ then $H^{a+p\sigma,b+q\sigma} _{C _2}(\R,\Z/2)\simeq \rH^{a+p\sigma,b+q\sigma} _{C _2}(\wt\EG C _2,\Z/2)$.
\end{proposition}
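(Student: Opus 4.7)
The plan is to apply the Bredon motivic cohomology functor to the motivic isotropy cofiber sequence $\EG C_{2+}\to S^0\to \wt\EG C_2$ from \eqref{eqn:cof2}. The associated long exact sequence
\[
\cdots \to \tilde H^{V-1}_{C_2}(\EG C_{2+},\Z/2)\to \tilde H^V_{C_2}(\wt\EG C_2,\Z/2)\to H^V_{C_2}(\R,\Z/2)\to \tilde H^V_{C_2}(\EG C_{2+},\Z/2)\to \cdots
\]
at $V=(a+p\sigma,b+q\sigma)$ reduces the proposition to the single vanishing
\[
H^{a+p\sigma,b+q\sigma}_{C_2}(\EG C_2,\Z/2)=0 \quad \text{whenever } b+q<0, \tag{$\star$}
\]
applied to both $V$ and $V-1$, each of which still satisfies $b\geq 0$ and $b+q<0$.

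To prove $(\star)$ I would perform three successive reductions. First, the $(2\sigma-2,\sigma-1)$-periodicity given by the invertible class $\kappa_2\in H^{2\sigma-2,\sigma-1}_{C_2}(\EG C_2,\Z/2)$ lets one multiply by $\kappa_2^{-q}$ to obtain an isomorphism $H^{a+p\sigma,b+q\sigma}_{C_2}(\EG C_2,\Z/2)\cong H^{(a+2q)+(p-2q)\sigma,\, b+q}_{C_2}(\EG C_2,\Z/2)$, reducing to the case of integral weight $b+q < 0$. Second, smash \eqref{eqn:cof1} with $\EG C_{2+}$ and consider the long exact sequence
\[
H^{V-1}_{C_2}(C_2\times \EG C_2)\to H^{V-\sigma}_{C_2}(\EG C_2)\to H^V_{C_2}(\EG C_2)\to H^V_{C_2}(C_2\times \EG C_2).
\]
The diagonal $C_2$-action on $C_2\times \EG C_2$ is free and presents it as the induced space $C_2\times_1 \EG C_2$, so the induction/restriction adjunction identifies $H^V_{C_2}(C_2\times \EG C_2)\cong H^{a+p,b+q}(\R,\Z/2)$, which vanishes for $b+q<0$ by Proposition~\ref{classic}; the same holds for $V-1$. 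Hence $H^{V-\sigma}_{C_2}(\EG C_2)\cong H^V_{C_2}(\EG C_2)$, and iteration in the $p$-direction reduces $(\star)$ further to the case $p=0$.

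Finally, at integral bidegree $(a,b+q)$ with $b+q<0$, the free-action isomorphism recalled in the introduction gives $H^{a,b+q}_{C_2}(\EG C_2,\Z/2)\cong H^{a,b+q}(\BG C_2,\Z/2)$, which vanishes in negative weight by Voevodsky's computation (Theorem~\ref{comp2}), completing $(\star)$. The main potential obstacle is making the Wirthm\"uller-type identification for $C_2\times \EG C_2$ rigorous in mixed $\sigma$-bidegree, since the free-action formula recalled in the introduction is stated only in integral bidegrees; the extension rests on recognizing $C_2\times \EG C_2$ as the induced space $C_2\times_1 \EG C_2$ and applying the induction/restriction adjunction to the equivariant Eilenberg--MacLane spectrum $M\underline{\Z/2}$, together with the non-equivariant connectivity of $\EG C_2=\colim_n(\A(n\sigma)\setminus 0)$.
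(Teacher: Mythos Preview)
Your proof is correct and follows essentially the same route as the paper: both use the long exact sequence of the motivic isotropy cofiber sequence together with the $(2\sigma-2,\sigma-1)$-periodicity of $H^{\star,\star}_{C_2}(\EG C_2)$ to reduce to the vanishing of $H^{a+p\sigma,\,b+q}_{C_2}(\EG C_2)$ in negative integral weight. The only difference is that the paper simply asserts $H^{a+p\sigma,b}_{C_2}(\EG C_2,\Z/2)=0$ for $b<0$ as known, whereas you supply the extra reduction to $p=0$ via the smashed cofiber sequence $C_{2+}\wedge\EG C_{2+}\to \EG C_{2+}\to S^\sigma\wedge\EG C_{2+}$ and the Wirthm\"uller identification; this is a harmless elaboration, and your caveat about the induction/restriction adjunction is well placed but not an obstruction.
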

\begin{proof} The proposition follows from the motivic isotropy sequence and the fact that $$H^{a+p\sigma,b}_{C_2}(\EG C _2,\Z/2)=0$$ if $b<0$. This follows from the fact that $H^{a,b}(\EG C _2,\Z/2)=0$ if $b<0$ (from the vanishing of motivic cohomology in this range) and inductively, from the long exact sequence induced in the Bredon motivic cohomology of $\EG C_2$ by the cofiber sequence $C_{2+}\rightarrow S^0\rightarrow S^\sigma$.

We also use the periodicity of the Bredon motivic cohomology of $\EG C_2$ to obtain $$H^{a+p\sigma,b+q\sigma}_{C _2}(\EG C _2,\Z/2)\simeq H^{a+2q+(p-2q)\sigma,b+q}_{C _2}(\EG C _2,\Z/2)=0$$ if $b+q<0$. 
\end{proof}
\begin{proposition}\label{vanr} ([\cite{HOV2},Prop. 2.8]) If $b\leq 0$ and $b+q<0$ then $H^{a+p\sigma,b+q\sigma}_{C _2}(\R,\Z/2)=0$.
\end{proposition}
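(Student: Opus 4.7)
The plan is to extend the analysis of Proposition \ref{tilde} to the range $b \le 0$, where the target $\rH^V_{C_2}(\wt\EG C_2, \Z/2)$ also vanishes.

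The same periodicity/vanishing argument used in the proof of Proposition \ref{tilde} shows that $H^V_{C_2}(\EG C_2, \Z/2) = 0$ whenever $b + q < 0$, regardless of the sign of $b$: the $\kappa_2$-periodicity collapses the weight onto the integer bidegree $(a+2q, b+q)$, where the identification $H^{\ast,\ast}_{C_2}(\EG C_2) \cong H^{\ast,\ast}(\BG C_2)$ and Theorem \ref{comp2} force the vanishing. Feeding this into the long exact sequence of the motivic isotropy cofiber sequence \eqref{eqn:cof2} makes both $H^V_{C_2}(\EG C_2)$ and $H^{V-1}_{C_2}(\EG C_2)$ vanish, yielding $H^V_{C_2}(\R, \Z/2) \cong \rH^V_{C_2}(\wt\EG C_2, \Z/2)$ throughout our range. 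It therefore suffices to prove $\rH^V_{C_2}(\wt\EG C_2, \Z/2) = 0$ when $b \le 0$ and $b + q < 0$.

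For this last vanishing, I would exploit the equivalences $\wt\EG C_2 \simeq S^\sigma \wedge \wt\EG C_2$ and $\wt\EG C_2 \simeq T^\sigma \wedge \wt\EG C_2$ recorded in Section~1: combining them (i.e. using $T^\sigma \wedge S^{-2\sigma} \simeq S^{(0,\sigma)}$) yields isomorphisms shifting $p$ and $q$ independently while fixing $(a, b)$. This reduces the claim to $\rH^{a, b}_{C_2}(\wt\EG C_2, \Z/2) = 0$ for every $a \in \Z$ and every $b \le 0$. For $b < 0$ the isotropy long exact sequence at the integer bidegree $(a, b)$ immediately gives the vanishing, since both $H^{a, b}(\R, \Z/2)$ and $H^{a, b}(\BG C_2, \Z/2)$ are zero in negative weight (the latter again by Theorem \ref{comp2}). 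For the boundary case $b = 0$, the only possibly nontrivial terms in the long exact sequence are $H^{0, 0}(\R, \Z/2) \cong \Z/2 \cong H^{0, 0}(\BG C_2, \Z/2)$, and the structure map between them is an isomorphism, so a short diagram chase forces $\rH^{a, 0}_{C_2}(\wt\EG C_2, \Z/2) = 0$ for every $a$.

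The main obstacle is the $b = 0$ subcase of the final step: there, mere weight considerations are insufficient, and one needs the explicit identification of $H^{0, 0}(\BG C_2, \Z/2)$ together with the fact that the restriction map from $H^{0, 0}(\R, \Z/2)$ is an isomorphism before the long exact sequence collapses.
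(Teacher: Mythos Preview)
Your proof is correct and follows the same skeleton as the paper's: apply the motivic isotropy sequence \eqref{eqn:cof2} and use the $\kappa_2$-periodicity of $\EG C_2$ together with the $(\sigma,0)$ and $(0,\sigma)$ periodicities of $\wt\EG C_2$ to reduce everything to vanishing statements in integer bidegrees.

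The difference lies in how you justify the vanishing of $\rH^{a,b}_{C_2}(\wt\EG C_2,\Z/2)$ for $b\leq 0$. The paper simply invokes the general result \cite[Prop.~2.8]{HOV2} (see also the citations in Proposition~\ref{x} and the proof of Theorem~\ref{Borel}), which holds for $\wt\EG C_2\wedge X_+$ over any base and any $C_2$-scheme $X$. You instead rederive this vanishing by hand for $X=\spec(\R)$: feeding the isotropy long exact sequence back in at integer weight and using Voevodsky's computation of $H^{*,0}(\BG C_2,\Z/2)$ from Theorem~\ref{comp2} to handle the delicate $b=0$ case. Your route is more self-contained within the present paper but is specific to $\R$; the cited argument is shorter and explains why, as the paper remarks, the proposition in fact holds for any $C_2$-equivariant scheme.
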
 
\begin{proof}
This follows from the motivic isotropy sequence and the known periodicities. This proposition is true for any $C _2$-equivariant smooth scheme $X$.
\end{proof}
The following proposition computes the realization maps of the Bredon motivic cohomology of $\wt\EG C_2$.
\begin{proposition} \label{x} a) The realization maps  $$\rH^{a,b} _{C _2}(\wt\EG C _2,\Z/2)\simeq \rH^{a-b+b\epsilon} _{Br,C_2}(\tilde{E} _{\Sigma_2} C _2,\Z/2)$$ are isomorphisms for $a\leq 2b+1$ and any $b\in \Z$. Moreover $$\rH^{a,b} _{C _2}(\wt\EG C _2,\Z/2)= 0$$ if $a>2b+1$ or $b\leq 0$.
 
 b) The region $b+q<0$, $b\geq 1$ of Figure \ref{TEST1} is a $ \MMt$-submodule of $$\rH^{*,*} _{C _2}(\wt\EG C _2,\Z/2)\simeq \rH^{*,*} _{C _2}(\Sigma^1\BG C _2,\Z/2)[\sigma^{\pm1},\epsilon^{\pm1}]$$ with invertible cohomology classes $\sigma$ of degree $(\sigma,0)$ and $\epsilon$ of degree $(0,\sigma)$. 
 \end{proposition}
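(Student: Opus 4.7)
The plan is to compare the motivic isotropy cofiber sequence $\EG C_{2+}\to S^0\to \wt\EG C_2$ with its Betti realization $E_{\Sigma_2}C_{2+}\to S^0\to \tilde{E}_{\Sigma_2}C_2$, obtaining a ladder of long exact sequences linked by realization maps. Using the freeness of the $C_2$-action on $\EG C_2$, the motivic long exact sequence takes the form
\begin{equation*}
\cdots\to H^{a-1,b}(\BG C_2,\Z/2)\to \rH^{a,b}_{C_2}(\wt\EG C_2,\Z/2)\to H^{a,b}(\R,\Z/2)\to H^{a,b}(\BG C_2,\Z/2)\to\cdots,
\end{equation*}
with the topological counterpart linking $H^{(a-b)+b\epsilon}_{Br}(B_{\Sigma_2}C_2)$, $H^{(a-b)+b\epsilon}_{Br}(\pt)$, and $\rH^{(a-b)+b\epsilon}_{Br}(\tilde{E}_{\Sigma_2}C_2)$. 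By Proposition~\ref{prop:ptiso} the realization is an isomorphism on the $\R$-terms whenever $b\geq 0$, and by Corollary~\ref{cr} it is an isomorphism on the $\BG C_2$-terms whenever $a\leq 2b$.

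\textbf{Proof of part (a).} For the vanishing, when $b<0$ both $H^{a,b}(\R,\Z/2)=0$ and $H^{a,b}(\BG C_2,\Z/2)=0$ by inspection of $H^{*,*}(\R,\Z/2)=\Z/2[x_2,y_2]$ and Theorem~\ref{comp2} (every generator has strictly positive motivic weight), so the LES forces $\rH^{a,b}_{C_2}(\wt\EG C_2)=0$. When $b=0$, the pull-back $H^{0,0}(\R)\to H^{0,0}(\BG C_2)$ is an isomorphism of copies of $\Z/2$ and the remaining groups vanish for $a\ne 0$, giving zero in all cases. When $a>2b+1$ with $b\geq 1$, a direct degree count in $H^{*,*}(\R)[s,t]/(s^2=\tau t+\rho s)$ yields $H^{a-1,b}(\BG C_2)=H^{a,b}(\BG C_2)=0$ above the line $a=2b$, while motivic vanishing above $a=b$ gives $H^{a,b}(\R)=0$. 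For the isomorphism, the 5-lemma applied to the ladder handles the range $a\leq 2b$ immediately. In the boundary case $a=2b+1$, the $H^{a-1,b}(\BG C_2)$ realization is an isomorphism at the tight edge $a-1=2b$ of Corollary~\ref{cr}, while both $H^{2b+1,b}(\BG C_2)$ and its realization target $H^{b+1+b\epsilon}_{Br}(B_{\Sigma_2}C_2)$ vanish (the latter by Lemma~\ref{gap}), as do $H^{2b+1,b}(\R)$ and $H^{b+1+b\epsilon}_{Br}(\pt)$ (the latter by Proposition~\ref{cvan2}); the 5-lemma then applies.

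\textbf{Proof of part (b) and main obstacle.} The equivalences $\wt\EG C_2\simeq S^\sigma\wedge\wt\EG C_2$ and $\wt\EG C_2\simeq T^\sigma\wedge\wt\EG C_2$ from \cite[Proposition 2.9]{HOV1}, together with the identity $T^\sigma=S^{2\sigma,\sigma}\simeq S^\sigma\wedge S^{\sigma,\sigma}$, produce two invertible cohomology classes $\sigma$ of bidegree $(\sigma,0)$ and $\epsilon$ of bidegree $(0,\sigma)$ in $\rH^{*,*}_{C_2}(\wt\EG C_2,\Z/2)$, yielding the Laurent structure $\Z/2[\sigma^{\pm 1},\epsilon^{\pm 1}]$. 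In integer bidegrees, the structure map $\BG C_2\to \spec(\R)$ is split by the trivial $C_2$-torsor, so the pull-back $H^{*,*}(\R)\to H^{*,*}(\BG C_2)$ is split injective; the motivic LES collapses into short split pieces and identifies $\rH^{a,b}_{C_2}(\wt\EG C_2)\simeq \rH^{a-1,b}(\BG C_2)=\rH^{a,b}(\Sigma^1\BG C_2)$, giving the claimed ring isomorphism. Finally, the red cone of Figure~\ref{TEST1}, consisting of bidegrees $(a+p\sigma,b+q\sigma)$ with $b\geq 0$ and $b+q<0$, is identified by Proposition~\ref{tilde} with the corresponding bidegrees in $\rH^{*,*}_{C_2}(\wt\EG C_2)$ and thus sits as a $\MMt$-submodule. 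The hard part will be the boundary case $a=2b+1$ in part (a): a direct 5-lemma fails because Corollary~\ref{cr} does not cover $H^{2b+1,b}(\BG C_2)$, and closing the argument requires combining the topological vanishing from Lemma~\ref{gap} and Proposition~\ref{cvan2} with Voevodsky's motivic vanishing to equate both sides to zero in exactly the right positions.
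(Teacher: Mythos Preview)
Your argument is correct, but the paper's route is more economical in exactly the place you flag as the ``hard part.'' The paper observes the splitting \emph{first}: since $\BG C_2\to\spec(\R)$ has a section, the long exact sequence breaks into short exact pieces and one obtains directly
\[
\rH^{a+1,b}_{C_2}(\wt\EG C_2,\Z/2)\;\cong\;\rH^{a,b}(\BG C_2,\Z/2),
\]
together with the analogous topological identification. The realization on $\wt\EG C_2$ in degree $(a+1,b)$ is then literally the realization on $\BG C_2$ in degree $(a,b)$, so Corollary~\ref{cr} (valid for $a\leq 2b$) yields the isomorphism for $a+1\leq 2b+1$ in one stroke. In other words, the index shift coming from the splitting absorbs your boundary case $a=2b+1$ for free, and no separate appeal to Lemma~\ref{gap} or Proposition~\ref{cvan2} is needed. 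Your five-lemma argument on the unsplit ladder is perfectly valid but forces you to treat that edge by hand. For the vanishing statements the paper simply cites \cite{HOV2}, Propositions~2.7 and~2.9, rather than recomputing from the ring presentations as you do; either way is fine.

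One small omission: in part~(a) for $b<0$ you verify only that the motivic side vanishes. To conclude the realization is an isomorphism you also need $\rH^{a-b+b\epsilon}_{\Br}(\tilde{E}_{\Sigma_2}C_2)=0$ in the range $a\leq 2b+1$, which follows from Corollary~\ref{comp3} (or from the same splitting argument on the topological side). This is trivial to add.
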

 \begin{proof}
 We have that $\BG C _2\rightarrow pt$ admits a section, which makes the long exact sequence induced by the motivic isotropy sequence in integer indexes to split. This gives short exact sequences
 $$0\rightarrow H^{*,*} _{C _2}(pt,\Z/2)\rightarrow H^{*,*} _{C _2}(\BG C _2,\Z/2)\rightarrow \rH^{*+1,*} _{C _2}(\wt\EG C _2,\Z/2)\rightarrow 0.$$
 This gives an isomorphism $\rH^{*,*}_{C_2}(\wt\EG C _2,\Z/2)\simeq \rH^{*,*}_{C_2}(\Sigma^1\BG C _2,\Z/2)$.  
 We also have a commutative diagram
 \[
\xymatrixrowsep{0.3in}
\xymatrixcolsep{0.25in}
\xymatrix@-0.9pc{
\rH^{a+1,b} _{C _2}(\wt\EG C _2,\Z/2)\ar[r]^{\simeq}\ar[d]&
\rH^{a,b} _{C _2}(\BG C _2,\Z/2)\ar[d]
\\ 
\rH^{a+1-b+b\epsilon} _{Br,C_2}(\tilde{E} _{\Sigma _2}C _2,\Z/2)\ar[r]^{\simeq}&
\rH^{a-b+b\epsilon} _{Br,C_2}(B _{\Sigma _2}C _2,\Z/2)
}
\]
where the right vertical map is an isomorphism if $a\leq 2b$ (see Corollary \ref{cr}). When $a\geq 2b+2$ or $b\leq 0$ we have that $$\rH^{a,b} _{C _2}(\wt\EG C _2,\Z/2)= 0$$ from [\cite{HOV2}, Prop. 2.7 and Prop. 2.9].

According to Proposition \ref{tilde}, we have that the groups in the region $b+q<0$, $b\geq 1$ of Figure \ref{TEST1} are isomorphic to the reduced Bredon motivic cohomology of $\wt\EG C_2$. It is obvious that $$\rH^{*,*} _{C _2}(\wt\EG C _2,\Z/2)\simeq \rH^{*,*} _{C _2}(\Sigma^1\BG C _2,\Z/2)[\sigma^{\pm1},\epsilon^{\pm1}]$$ according to the discussion above and the periodicities of the latest cohomology.
 \end{proof}
We know that over any field $k$ of characteristic zero we have $\rH^{a,b}_{C_2}(\wt\EG C _2,\Z/2)=0$ for $a\leq 1$ [\cite{HOV1}, Lemma 4.2]. 

 We discuss now the relation between the realization maps of Bredon motivic cohomology of real numbers and the realization maps of Bredon motivic cohomology of $\wt\EG C_2$.

 We have for $a\leq 2b+1$,  $b+q<0$ the diagram
  \[
\xymatrixrowsep{0.3in}
\xymatrixcolsep{0.25in}
\xymatrix@-0.9pc{
\rH^{a+p\sigma,b+q\sigma} _{C _2}(\wt \EG C _2,\Z/2)\ar[r]^{\simeq}\ar[d]^{\simeq}&
H^{a+p\sigma,b+q\sigma} _{C _2}(\R,\Z/2)\ar[d]
\\ 
\rH^{a-b+(p-q)\sigma+b\epsilon+q\sigma\otimes\epsilon} _{Br,K}(\tilde{E} _{\Sigma _2}C _2,\Z/2)\ar[r]&
H^{a-b+(p-q)\sigma+b\epsilon+q\sigma\otimes\epsilon} _{Br,K}(pt,\Z/2)
}
\]
If $a>2b+1$, $b+q<0$ then the upper horizontal map is zero because it is an isomorphism of two groups that are zero. It implies that the right vertical realization maps are trivial monomorphisms (the domain is zero) in this case. Also, in this case, the left vertical realization maps in the diagram above are monomorphisms, but  not necessary isomorphisms. 

In the case $a\leq 2b+1$, $b+q<0$ the right vertical realization maps coincide with an induced map in the $C_2\times\Sigma_2$ topological isotropy sequence (the bottom horizontal map in the above diagram). 

 According to the periodicity of the Bredon cohomology of $\tilde{E} _{\Sigma _2}C _2$ and the proof of Proposition \ref{x} we have 
$$ \rH^{a-b+(p-q)\sigma+b\epsilon+q\sigma\otimes\epsilon} _{Br,K}(\tilde{E} _{\Sigma _2}C _2,\Z/2)\simeq \rH^{a-b+b\epsilon} _{Br,C_2}(\tilde{E} _{\Sigma _2}C _2,\Z/2)\simeq \rH^{a-b-1+b\epsilon} _{Br,C_2}(B _{\Sigma _2}C _2,\Z/2).$$
The latest group is reviewed in Theorem \ref{comp1}. From Corollary \ref{comp3} we conclude that $$\rH^{a+p\sigma+q\sigma\otimes \epsilon} _{Br,K}(\tilde{E}_{\Sigma_2}C_2,\Z/2)=0$$ if $a\leq 3$ and any $p,q\in \mathbb{Z}$.

In conclusion, we have the following corollary about the realization maps in the region $b+q<0,$ $b\geq 1$ of Figure \ref{TEST1}:
\begin{corollary} \label{itilde} Let $b+q<0$, $p\in \Z$. The realization maps 
$$H^{a+p\sigma,b+q\sigma}_{C_2}(\R ,\Z/2)\rightarrow H^{a-b+(p-q)\sigma+b\epsilon+q\sigma\otimes\epsilon}_{Br,K}(pt,\Z/2)$$ 
are trivial monomorphisms (the domain is zero) if $a>2b+1$ or $a\leq 1$ or $b\leq 0$. When $a\leq 2b+1$, the displayed realization maps coincide with the $C_2\times \Sigma_2$-isotropy sequence maps $$\rH^{a-b+(p-q)\sigma+b\epsilon+q\sigma\otimes\epsilon} _{Br,K}(\tilde{E} _{\Sigma _2}C _2,\Z/2)\rightarrow H^{a-b+(p-q)\sigma+b\epsilon+q\sigma\otimes\epsilon} _{Br,K}(pt,\Z/2).$$
\end{corollary}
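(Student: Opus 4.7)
The plan is to prove the corollary by case analysis on $b$ and $a$, with every case reduced to an application of previously established propositions or of the commutative square of realization maps displayed immediately before the statement. No new geometric input will be needed; the content of the corollary is essentially a bookkeeping reassembly of Proposition \ref{tilde}, Proposition \ref{vanr}, Proposition \ref{x}, and the comparison between the motivic and topological isotropy cofiber sequences.

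First I would dispatch the three vanishing conditions. If $b\leq 0$, Proposition \ref{vanr} immediately gives $H^{a+p\sigma,b+q\sigma}_{C_2}(\R,\Z/2)=0$, so the realization map is a trivial monomorphism. If instead $b\geq 1$ (combined with $b+q<0$ this forces $q<0$), Proposition \ref{tilde} identifies the source of the realization map with $\rH^{a+p\sigma,b+q\sigma}_{C_2}(\wt\EG C_2,\Z/2)$, and the invertible classes of degrees $(\sigma,0)$ and $(0,\sigma)$ supplied by Proposition \ref{x}(b) reduce this group, up to a unit in the appropriate shift, to $\rH^{a,b}_{C_2}(\wt\EG C_2,\Z/2)$. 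This latter group vanishes both when $a>2b+1$, by Proposition \ref{x}(a), and when $a\leq 1$, by \cite[Lemma~4.2]{HOV1} as recalled in the sentence immediately following Proposition \ref{x}. This covers every vanishing clause of the statement.

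In the remaining range $1<a\leq 2b+1$ (in which $b\geq 1$ and $q<0$ automatically), I would read off the second assertion directly from the commutative square displayed immediately before the statement. Its top horizontal arrow is the isomorphism of Proposition \ref{tilde}, and its left vertical realization arrow is an isomorphism by Proposition \ref{x}(a) combined with the matching periodicities of $\wt\EG C_2$ and $\tilde{E}_{\Sigma_2}C_2$. Commutativity then forces the right vertical realization map to agree with the bottom horizontal arrow, which by construction is exactly the $C_2\times\Sigma_2$-isotropy sequence map induced by $pt_+\to\tilde{E}_{\Sigma_2}C_2$. The argument is an assembly of known facts, so there is no substantial obstacle; the only mildly delicate point is verifying that the motivic periodicity classes of $\wt\EG C_2$ realize compatibly with the topological periodicities of $\tilde{E}_{\Sigma_2}C_2$ in the directions $\sigma$ and $\sigma\otimes\epsilon$, which follows from the explicit sphere formula $\Re(S^{a+p\sigma,b+q\sigma})\simeq S^{a-b+(p-q)\sigma+b\epsilon+q\sigma\otimes\epsilon}$ recalled in Section~1 together with the periodicity discussion of Section~2.3.
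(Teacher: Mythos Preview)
Your proposal is correct and follows essentially the same route as the paper: the paper's argument is the discussion immediately preceding the corollary, which combines Proposition \ref{tilde}, Proposition \ref{vanr}, Proposition \ref{x}, the vanishing $\rH^{a,b}_{C_2}(\wt\EG C_2,\Z/2)=0$ for $a\leq 1$, and the commutative square comparing the motivic and topological isotropy sequences, exactly as you do. Your case organization is slightly cleaner than the paper's running narrative, but the ingredients and logic are the same.
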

The next theorem says that all realization maps from Corollary \ref{itilde} are monomorphisms. 
\begin{theorem}
	\label{ncinj}
	The connecting map in the $C_2\times \Sigma_2$ topological isotropy sequence 
	\begin{align*}
		\dl:	H_{Br,K}^{a+p\sigma+b\epsilon+q\sigma\otimes\epsilon} (E_{\Sigma_2}C_2)\to \hr_{Br,K}^{a+1+p\sigma+b\epsilon+q\sigma\otimes\epsilon}(\tilde{E}_{\Sigma_2}C_2)
	\end{align*}
	vanishes when both $1-b\leq a\leq b+1$ and $b+q<0.$
\end{theorem}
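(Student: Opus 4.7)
The plan is to reformulate the statement equivalently via the topological isotropy long exact sequence: $\delta=0$ at degree $V$ if and only if the subsequent map $f_{top}: \tilde{H}^{V+1}_{Br}(\tilde{E}_{\Sigma_2}C_2) \to H^{V+1}_{Br}(pt)$ is injective. The strategy is to transfer this injectivity from the motivic side via Betti realization, having first disposed of the degenerate cases where the target vanishes.

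I would first handle the boundary cases. When $a=b+1$, use the $\sigma$- and $\sigma\otimes\epsilon$-periodicities of $\tilde{E}_{\Sigma_2}C_2$ together with the splitting of the topological isotropy sequence on pure integer/$\epsilon$-indices (coming from the existence of $\Sigma_2$-fixed points of $B_{\Sigma_2}C_2$) to rewrite the target as $\tilde{H}^{a+b\epsilon}_{Br}(B_{\Sigma_2}C_2)$; by Lemma \ref{gap} (with $a-b=1$) this vanishes. When $b=0$, the conditions force $a=1$ and the target, again reduced to $\tilde{H}^{1}_{Br}(B_{\Sigma_2}C_2)$, vanishes by Corollary \ref{comp3}. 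In both cases $\delta=0$ holds trivially.

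For the main case $b\geq 1$ and $1-b\leq a\leq b$, I would set up the comparison with the motivic isotropy sequence via Betti realization. At motivic degree $V_{mot}$ corresponding to $V$, the source $H^{V_{mot}}_{C_2}(\EG C_{2+})$ vanishes in the red cone hypothesis $b+q<0$: this follows from the $(2\sigma-2,\sigma-1)$-periodicity of Borel motivic cohomology, which shifts to an integer weight where the motivic cohomology of $\BG C_2$ vanishes. Consequently $\delta_{mot}=0$ and $f_{mot}:\tilde{H}^{V_{mot}+(1,0)}_{C_2}(\wt\EG C_2)\to H^{V_{mot}+(1,0)}_{C_2}(\R)$ is an isomorphism by exactness (recovering Proposition \ref{tilde}). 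Proposition \ref{x} supplies that the target realization $Re_t:\tilde{H}^{V_{mot}+(1,0)}_{C_2}(\wt\EG C_2)\to \tilde{H}^{V+1}_{Br}(\tilde{E}_{\Sigma_2}C_2)$ is an isomorphism, since $a+b+1\leq 2b+1$ when $a\leq b$. The commuting square $f_{top}\circ Re_t=Re_B\circ f_{mot}$, with $f_{mot}$ and $Re_t$ both isomorphisms, reduces injectivity of $f_{top}$ to injectivity of the Betti realization $Re_B:H^{V_{mot}+(1,0)}_{C_2}(\R)\to H^{V+1}_{Br}(pt)$.

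The main obstacle I expect is establishing this last injectivity. I would address it by tracing the chain of isomorphisms $H^{V_{mot}+(1,0)}_{C_2}(\R)\simeq \tilde{H}^{V_{mot}+(1,0)}_{C_2}(\wt\EG C_2)\simeq \tilde{H}^{V+1}_{Br}(\tilde{E}_{\Sigma_2}C_2)\simeq \tilde{H}^{a+b\epsilon}_{Br}(B_{\Sigma_2}C_2)$ to identify the motivic classes concretely with generators on $B_{\Sigma_2}C_2$, via the ring presentation of Theorem \ref{comp1} (i.e.\ polynomials in $c,\beta$ over $H^{\star}_{Br}(pt)$). Computing the image of these generators in $H^{\star}_{Br}(pt)$ under the isotropy map and comparing against the Poincare series of Propositions \ref{PC}--\ref{NC} would confirm that they map to linearly independent elements, giving injectivity of $Re_B$ and hence of $f_{top}$. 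By exactness of the topological LES, $\delta_{top}=0$ at $V$.
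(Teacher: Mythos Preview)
Your reformulation --- $\delta=0$ at $V$ iff the next map $f_{top}:\tilde H^{V+1}_{Br}(\tilde E_{\Sigma_2}C_2)\to H^{V+1}_{Br}(pt)$ is injective --- is correct, and the boundary cases $a=b+1$ and $b=0$ are fine. The problem is that the passage through the motivic side accomplishes nothing. With $Re_t$ and $f_{mot}$ both isomorphisms, the commuting square only says $f_{top}$ is injective iff $Re_B$ is injective; and then to establish the latter you propose to trace back through \emph{these same isomorphisms} and ``compute the image of these generators in $H^{\star}_{Br}(pt)$ under the isotropy map''. That map \emph{is} $f_{top}$. So after the detour you are exactly where you started: you must compute $f_{top}$ on an explicit basis of $\tilde H^\star(\tilde E_{\Sigma_2}C_2)$ and check linear independence of the images in the $RO(C_2\times\Sigma_2)$-graded cohomology of a point. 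That is the whole content of the theorem, and you do not carry it out; the Poincar\'e-series comparison you allude to is a genuine combinatorial computation in the mixed cones of Propositions~\ref{caseMC1}--\ref{NC}, not a formality.

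The paper avoids this by working on the \emph{source} of $\delta$ rather than the target. It uses the explicit presentation
\[
H^\star_{Br,K}(E_{\Sigma_2}C_2)\;=\;\frac{H^{*+*\epsilon}_{Br}(pt)[x_1,y_1,x_3,y_3,\kappa_2^{\pm1}]}{(\kappa_2 y_2=y_1y_3,\ \kappa_2 x_2=x_1y_3+x_3y_1)}
\]
(obtained from Theorem~\ref{isov} and \cite{DV}), so that every class is a monomial $\alpha$ in $x_i,y_i,\theta_2,\kappa_2^{\pm1}$. If the $\kappa_2$-exponent is $\geq 0$ then $\alpha$ is in the image of $H^\star_{Br,K}(pt)$ and $\delta(\alpha)=0$ by exactness. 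If the exponent is $-N<0$, one computes once $\delta(\kappa_2^{-N})=\Sigma b^{N}/(x_1^{N}x_3^{N})$, and then uses that $\delta$ is an $H^\star_{Br,K}(pt)$-module map to write $\delta(\alpha)$ as $x_1^{n_1-N}x_3^{n_3-N}\cdot y_1^{m_1}y_3^{m_3}\cdot(\theta_2/x_2^{n_2}y_2^{m_2})\Sigma b^{N}$. The hypothesis $a\le b+1$ forces the $y_1,y_3$-powers to push the $(a,b\epsilon)$-degree across the vanishing gap of Lemma~\ref{gap}, so this product is zero; and $b+q<0$ rules out the positive-cone monomials entirely. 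These two computational ingredients --- the explicit generators of the source, and the single evaluation $\delta(\kappa_2^{-N})$ combined with the module structure --- are what your plan is missing.
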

\begin{proof}
	We know that all the generators in $H^\st_{Br,K}(E_{\Sigma_2}C_2)$ are of the form
	\begin{align*}
		\al =	\fc{\ta_2}{x_2^{n_2}y_2^{m_2}}x_1^{n_1}x_3^{n_3}y_1^{m_1}y_3^{m_3} \kappa_2^n \txo {x_2^{n_2}y_2^{m_2}}x_1^{n_1}x_3^{n_3}y_1^{m_1}y_3^{m_3} \kappa_2^n,
	\end{align*}
	where $n_i,m_i \geq 0$ and $n \in \Z.$ This is because, using Theorem \ref{isov} and the $C_2\times \Sigma_2$ topological isotropy sequence, one has
	$$H^\star _{Br,K}(E_{\Sigma_2}C_2)=\frac{H^{*+*\epsilon}_{Br,C_2}(pt)[x_3,y_3,x_1,y_1,\kappa^{\pm 1}_2]}{(\kappa_2y_2=y_1y_3,\kappa_2x_2=x_1y_3+x_3y_1)}$$
	as in [\cite{DV}, Theorem 4.3.2]. 
	
	We have from the $C_2\times \Sigma_2$ topological isotropy sequence that if $n>0$
	$$H^{n-n\sigma+n\epsilon-n\sigma\otimes\epsilon}_{Br,K}(E_{\Sigma_2}C_2,\Z/2)=\Z/2(\kappa_2^{-n})\stackrel{\delta}{\simeq} \rH^{n+1-n\sigma+n\epsilon-n\sigma\otimes\epsilon}_{Br,K}(\tilde{E}_{\Sigma_2}C_2,\Z/2)=\Z/2\left(\frac{\Sigma b^n}{x^n_1x^n_3}\right),$$
	so $\delta(\kappa_2^{-n})=\frac{\Sigma b^n}{x^n_1x^n_3}$. The isomorphism follows from the fact that $H^{n+1-n\sigma+n\epsilon-n\sigma\otimes\epsilon} _{Br,K}(pt,\Z/2)=0$ (see Proposition \ref{caseMC2}). 
	
	If $n \geq 0,$ then $\al \in \ker \dl$ from Theorem \ref{isov}. This is because it belongs to the image of the injective map (in this range) given by $H^\st _{Br,K}(pt)\rightarrow H^\st _{Br,K}(E_{\Sigma_2}C_2,\Z/2)$.

	Let $n= -N$ where $N > 0.$ If
	\begin{align*}
		\al =	\fc{\ta_2}{x_2^{n_2}y_2^{m_2}}x_1^{n_1}x_3^{n_3}y_1^{m_1}y_3^{m_3} \kc{N}\in H^{a+p\sigma+b\epsilon+q\sigma\otimes\epsilon}_{Br,K}(E_{\Sigma_2}C_2,\Z/2)
	\end{align*}
	then, because $\delta$ is a $H^{\star}_{Br,K}(pt,\Z/2)$-module map, we have
	\begin{align*}
		\dl(\al) &= \fc{\ta_2}{x_2^{n_2}y_2^{m_2}}x_1^{n_1}x_3^{n_3}y_1^{m_1}y_3^{m_3} \fc{\susp b^N}{x_1^Nx_3^N}\\
		&= x_1^{n_1-N}x_3^{n_3-N} \cp y_1^{m_1}y_3^{m_3} \cp \left(\fc{\ta_2}{x_2^{n_2}y_2^{m_2}}\susp b^N\right).
	\end{align*}
	If $\al$ lives in the range $1-b\leq a\leq b+1$, then we must have that
	\begin{align*}
		y_1^{m_1}y_3^{m_3} \cp \left(\fc{\ta_2}{x_2^{n_2}y_2^{m_2}}\susp b^N\right) = 0,
	\end{align*}
	since multiplication with $y_1$ and $y_3$ must eventually cross the gap along the line $b=a-3$ in $\hr^\st_{Br,K}(\et).$ Indeed, we have that
	\begin{align*}
		\left|\fc{\ta_2}{x_2^{n_2}y_2^{m_2}}\susp b^N\right|=(A,0,B,0)=(N+m_2+3,0,N-m_2-n_2-2,0),
	\end{align*}
	where $B \leq A-5< A-3.$ But we know that $\al$ lives in the range $a\leq b+1,$ so $A-3 -m_1-m_3<a= A-m_1-m_3-1 \leq b+1=B+1.$ Then we have
	\begin{align*}
		A-3 > B \txa A-3 \leq  B+ m_1+m_3,
	\end{align*}
	so $A-m_1'- m_3' -3= B$ for some $m_1' \in [0,m_1]$ and $m_3' \in [0,m_3].$ Then
	
	\begin{align*}
		y_1^{m_1'}y_3^{m_3'} \cp \left(\fc{\ta_2}{x_2^{n_2}y_2^{m_2}}\susp b^N\right) \in \hr^{A-m_1'- m_3'+m_1'\sigma+B\epsilon+m_3'\sigma\otimes\epsilon}_{Br,K}(\et)=0,
	\end{align*}
	since by Lemma \ref{gap} $\hr^\st_{Br,K}(\et)=0$ along the line $a-3 =b.$ Thus,
	\begin{align*}
		\dl(\al) = x_1^{n_1-N}x_3^{n_3-N} \cp y_1^{m_1}y_3^{m_3} \cp \left(\fc{\ta_2}{x_2^{n_2}y_2^{m_2}}\susp b^N\right) = x_1^{n_1-N}x_3^{n_3-N} \cp y_1^{m_1-m_1'}y_3^{m_3-m_3'}y_1^{m_1'}y_3^{m_3'} \cp \left(\fc{\ta_2}{x_2^{n_2}y_2^{m_2}}\susp b^N\right)=0.
	\end{align*}
	If 
	\begin{align*}
		\al =	{x_2^{n_2}y_2^{m_2}}x_1^{n_1}x_3^{n_3}y_1^{m_1}y_3^{m_3} \kc{N},
	\end{align*}
	then the condition $b+q <0$ implies that
	\begin{align*}
		n_2+m_2+n_3+m_3 < 0,
	\end{align*}
	which is false. Hence there are no non-vanishing $\al$ of this form in the given range.
\end{proof}
\begin{remark} We use the vanishing result of Proposition \ref{ncinj} in the last section, but we can notice from the above proof that there is also a vanishing for the larger range $a\leq b+2$, $b+q<0$. The fact that there is no lower bound for $a$ in Proposition \ref{ncinj} follows also from Corollary \ref{itilde}. 
\end{remark}
\section{The case $b\geq 0$, $b+q\geq 0$.}
In this section, we will prove that all the realization maps in the range $b\geq 0$, $b+q\geq 0$ are isomorphisms. This is Theorem \ref{posisom} below and it is essentially based on the similar result in indexes $(a+2q\sigma,b+q\sigma)$ proved in the Theorem \ref{2q}.

Notice that this is a generalization of the fact that the motivic cohomology of the real numbers has isomorphic realization maps into the $RO(C_2)-$graded Bredon cohomology of a point when the weight is greater or equal to zero (Proposition \ref{classic}).

\begin{theorem} \label{2q} We have that if $b\geq 0$, $b+q\geq 0$ then
$$H^{a+2q\sigma,b+q\sigma}_{C _2}(\R,\Z/2)\simeq H^{a-b+q\sigma+b\epsilon+q\sigma\otimes \epsilon}_{Br,K}(pt,\Z/2).$$

If moreover $a>2b\geq 0$ then $H^{a+2q\sigma,b+q\sigma}_{C _2}(\R,\Z/2)=0$ and  the codomain is also zero in this range for any $q\in \Z$, $b+q\geq 0$.
\end{theorem}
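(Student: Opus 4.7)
The plan is to set up the motivic isotropy cofiber sequence $\EG C_{2+}\to(\spec\R)_+\to\wt\EG C_2$ alongside its Betti realization $E_{\Sigma_2}C_{2+}\to\pt_+\to\tilde{E}_{\Sigma_2}C_2$ and compare them via the realization map, obtaining a ladder of long exact sequences in which $H^{a+2q\sigma,b+q\sigma}_{C_2}(\R,\Z/2)$ and $H^V_{Br}(\pt,\Z/2)$ (with $V=a-b+q\sigma+b\epsilon+q\sigma\otimes\epsilon$) occupy the middle positions. It then suffices to verify that realization is an isomorphism on the four adjacent terms in the ladder, namely $H^{a-1+2q\sigma,b+q\sigma}_{C_2}(\EG C_2)$, $\rH^{a+2q\sigma,b+q\sigma}_{C_2}(\wt\EG C_2)$, $H^{a+2q\sigma,b+q\sigma}_{C_2}(\EG C_2)$, and $\rH^{a+1+2q\sigma,b+q\sigma}_{C_2}(\wt\EG C_2)$, in the range $a\leq 2b$, and to handle the vanishing regime $a>2b$ directly.

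For the $\EG C_2$ terms I would exploit the invertibility of $\kappa_2\in H^{2\sigma-2,\sigma-1}_{C_2}(\EG C_2,\Z/2)$ and of its topological Betti realization in degree $-1+\sigma-\epsilon+\sigma\otimes\epsilon$. Multiplication by $\kappa_2^{-q}$ on each side, combined with the free $C_2$-action identifications, reduces the comparison to the realization map
\begin{equation*}
H^{a+2q,b+q}(\BG C_2,\Z/2)\longrightarrow H^{a+q-b+(b+q)\epsilon}_{Br}(B_{\Sigma_2}C_2,\Z/2),
\end{equation*}
which is an isomorphism for $a\leq 2b$ by Corollary \ref{cr}; moreover, Voevodsky's computation in Theorem \ref{comp2} makes the source (hence also the target) vanish whenever $a>2b$. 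For the $\wt\EG C_2$ terms I would use the $(\sigma,0)$- and $(0,\sigma)$-periodicities of $\rH^{\star,\star}_{C_2}(\wt\EG C_2,\Z/2)$ together with their topological $\sigma$- and $\sigma\otimes\epsilon$-analogues to reduce to integer weights; Proposition \ref{x}(a) then supplies the realization isomorphism for $a\leq 2b+1$, with both sides vanishing when $a>2b+1$ or $b\leq 0$.

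Assembling these inputs, whenever $a\leq 2b$ all four outer vertical arrows in the ladder are isomorphisms (since $a-1\leq 2b$ and $a+1\leq 2b+1$), and the five-lemma produces the required isomorphism on $H^{a+2q\sigma,b+q\sigma}_{C_2}(\R,\Z/2)$. For $a\geq 2b+2$ all four neighboring motivic groups vanish outright, so the long exact sequence forces $H^{a+2q\sigma,b+q\sigma}_{C_2}(\R,\Z/2)=0$; the codomain vanishes by a direct Poincare-series analysis using Propositions \ref{pc} and \ref{caseMC2} (under $b+q\geq 0$ and $a>2b$ no monomial of exponent $x^{b-a}$ can occur, because both in the positive cone and in the mixed-cone-II formula the powers of $x$ are bounded below by $-b$). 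I expect the main obstacle to be the boundary case $a=2b+1$, where an $\EG C_2$-term and a $\wt\EG C_2$-term are simultaneously non-trivial and the five-lemma does not directly apply. The resolution is to transport the surjectivity of the topological connecting map
\begin{equation*}
\partial^{\mathrm{top}}\colon H^{V'}_{Br}(E_{\Sigma_2}C_2,\Z/2)\twoheadrightarrow \rH^V_{Br}(\tilde{E}_{\Sigma_2}C_2,\Z/2),
\end{equation*}
where $V'=V-1$ in the integer component, which holds because $H^V_{Br}(\pt,\Z/2)=0$ on this boundary line (again by the same Poincare-series computation), back to the motivic side through the realization isomorphisms established in the previous step. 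This delivers the motivic vanishing at the boundary and completes the argument.
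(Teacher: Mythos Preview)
Your approach is correct and genuinely different from the paper's. The paper splits into two cases: for $q\geq 0$ it invokes Nie's decomposition of the motivic complex $C_*z(b+q\sigma)^{\Z/2}$ into a direct sum of Tate twists, reducing termwise to the classical realization isomorphism for $H^{*,*}(\R,\Z/2)$; for $q<0$ it uses the cofiber sequence $W_{-q,+}\to S^0\to T^{-q\sigma}$ and the explicit computation of $H^{*,*}_{C_2}(W_{-q},\Z/2)$, with the delicate point $a=2b+1$ handled via Proposition~\ref{v2n}. Your isotropy-sequence argument is uniform in the sign of $q$ and avoids both the Nie decomposition and Proposition~\ref{v2n}, at the cost of relying on the $\BG C_2$ and $\wt\EG C_2$ realization results (Corollary~\ref{cr}, Proposition~\ref{x}) already assembled in Sections~2--3. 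The paper's route has the side benefit of producing an explicit direct-sum formula for the groups in the $q\geq 0$ range.

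One simplification: your concern at $a=2b+1$ is unnecessary. The $\EG C_2$ term at index $a$ and the $\wt\EG C_2$ term at index $a+1$ both vanish on the motivic side (by $a>2b$ and $a+1>2b+1$ respectively) \emph{and} on the topological side: after the $\kappa_2$- and $(\sigma,\sigma\otimes\epsilon)$-periodicities these become $H^{(b+q+1)+(b+q)\epsilon}_{Br}(B_{\Sigma_2}C_2)$ and $\rH^{(b+1)+b\epsilon}_{Br}(B_{\Sigma_2}C_2)$, which are zero by Lemma~\ref{gap}. Hence all four outer vertical arrows are isomorphisms at $a=2b+1$ as well, and the five-lemma applies directly; no transport-of-surjectivity argument is needed.
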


\begin{proof}
 We have that for an actual $C _2$-representation $V=b+q\sigma$, $b,q\geq 0$ (see [\cite{V}, Proposition 3.4 and Proposition 3.5]), $$H^{a+2q\sigma,b+q\sigma} _{C _2}(\R,\Z/2)=H^{a-2b} _{GNis}(\R,C _*z(V)^{\Z/2}).$$

 From \cite{Nie} we have a decomposition in motivic complexes in $DM^{-}(\R)$
 $$C _*z(V)^{\Z/2}\simeq \oplus  _{j=0}^{n-1}(\Z/2(j)[2j]\oplus\Z/2(j)[2j+1])\oplus \Z/2(n)[2n].$$
 Applying cohomology as above we obtain that 
 $$H^{a+2q\sigma,b+q\sigma} _{C _2}(\R,\Z/2)\simeq \oplus _{j=0}^{q-1} H^{a+2j,j+b}(\R,\Z/2)\oplus H^{a+2j+1,j+b}(\R,\Z/2)\oplus H^{a+2q,q+b}(\R,\Z/2).$$ In particular, if $a>b\geq 0$, $q\geq 0$  then 
 $H^{a+2q\sigma,b+q\sigma} _{C _2}(\R,\Z/2)=0$; the codomain of its realization map is zero in this case from Proposition \ref{cvan2}.
 
 Applying the realization functor to the above decomposition of complexes we obtain a decomposition in the topological target, giving a decomposition for the  $RO(C_2\times \Sigma_2)$-graded Bredon cohomology of a point:
 $$H^{a-b+q\sigma+b\epsilon+q\sigma\otimes \epsilon} _{Br,K}(pt,\Z/2)\simeq \oplus _{j=0}^{q-1} H^{a-b+j+(j+b)\sigma}_{Br,C_2}(pt,\Z/2)\oplus H^{a-b+j+1+(j+b)\sigma}_{Br,C_2}(pt,\Z/2)\oplus H^{a-b+q+(b+q)\sigma}_{Br,C_2}(pt,\Z/2).$$ 
 
 Notice that the realization from Proposition \ref{classic} applies to each term of the direct sum because $b,q\geq 0$. The statement of the theorem follows now in the case $b,q\geq 0$.
 
 We have that 
 $$W_n=A(n\sigma)\setminus {0}_+\rightarrow S^0\rightarrow T^{n\sigma}=\frac{A(n\sigma)}{A(n\sigma)\setminus {0}}$$
 is an equivariant  $C_2$-motivic cofiber sequence that gives a long exact sequence in Bredon motivic cohomology. 
 %Because of the properties of Bredon motivic cohomology we obtain  
%short exact sequences in integer indexing 
 %$$0\rightarrow H^{*,*} _{C _2}(\R,\Z/2)\rightarrow H^{*,*} _{C _2}(W _{-q},\Z/2)\rightarrow \rH^{*+1,*} _{C _2}(T^{-q\sigma},\Z/2)\rightarrow 0.$$
 %This implies that $\rH^{*,*} _{C _2}(\Sigma W _{-q},\Z/2)=\rH^{*,*} _{C _2}(T^{-q\sigma},\Z/2)$.
 
  Notice that $T^{n\sigma}=S^{n\sigma}\wedge S^{n\sigma} _t=S^{2n\sigma,n\sigma}$ (\cite{HOV1}) and for $q<0$ we have by definition that $$H^{a+2q\sigma,b+q\sigma} _{C _2}(\R,\Z/2)=\rH^{a,b} _{C_2}(T^{-q\sigma},\Z/2).$$
 
 Because the realization maps $$H^{a,b} _{C _2}(W_{-q},\Z/2)\simeq H^{a-b+b\epsilon} _{Br,C_2}(S(-q\sigma\oplus -q\sigma\otimes \epsilon),\Z/2)$$ are isomorphisms when $a\leq 2b$ it implies that $$H^{a+2q\sigma,b+q\sigma} _{C _2}(\R,\Z/2)\simeq H^{a-b+q\sigma+b\epsilon+q\sigma\otimes \epsilon}_{Br,K}(pt,\Z/2)$$ is an isomorphism if $q<0$ and $b+q\geq 0$ and $a\leq 2b$ from the following diagram and 5-lemma:
  \[
\xymatrixrowsep{0.3in}
\xymatrixcolsep{0.25in}
\xymatrix@-0.9pc{
H^{a-1, b}_{C _2}(\R,\Z/2) \ar[r]\ar[d]^{\simeq} & 
H^{a-1, b}_{C_2}(W _{-q},\Z/2) \ar[r] \ar[d]^{\simeq}& 
\rH^{a, b}_{C _2}(T^{-q},\Z/2)\ar[r] \ar[d] &
H^{a,b} _{C _2}(\R,\Z/2)\ar[d]^{\simeq}\ar[r]&
H^{a,b}_{C _2}(W_{-q},\Z/2)\ar[d]^{\simeq}
\\ 
H^{a-b-1+b\sigma}_{Br,C_2}(pt)\ar[r] &
H^{a-b-1+b\sigma}_{Br,C_2}(S(-q\sigma-q\sigma\otimes \epsilon)) \ar[r] & 
\rH^{a-b+b\sigma}_{Br,C_2}(S^{-q\sigma-q\sigma\otimes \epsilon})  \ar[r]&
H^{a-b+b\sigma} _{Br,C_2}(pt)\ar[r]&
H^{a-b+b\sigma} _{Br,C_2}(S(-q\sigma-q\sigma\otimes\epsilon))
}
\]
Notice that if $a>2b$  then $H^{a+2q\sigma,b+q\sigma} _{C _2}(\R,\Z/2)=0$ if $q<0$ and $b\geq -q$. This follows from [\cite{HOV2}, Proposition 2.9] if $a\geq 2b+2$ and from Proposition \ref{v2n} in the case $a=2b+1$.
 But $$H^{a-b+q\sigma+b\epsilon+q\sigma\otimes \epsilon}_{Br,K}(pt,\Z/2)=0$$ if $a-b>b\geq -q>0$ (see Proposition \ref{cvan}) so the realization maps are isomorphisms in this range. We conclude that 
$$H^{a+2q\sigma,b+q\sigma} _{C _2}(\R,\Z/2)\simeq H^{a-b+q\sigma+b\epsilon+q\sigma\otimes \epsilon}_{Br,K}(pt,\Z/2)$$ is an isomorphism if $q<0$ and $b+q\geq 0$ which concludes the proof.
\end{proof}

The next proposition settles the realization maps in the range $b+q\geq 0$, $b\geq 0$ of Figure \ref{TEST1}.

\begin{theorem}\label{posisom} Let $b\geq 0$ and $b+q\geq 0$. Then the realization map is an isomorphism 
$$H^{a+p\sigma,b+q\sigma}_{C _2}(\R,\Z/2)\simeq H^{a-b+(p-q)\sigma+b\epsilon+q\sigma\otimes \epsilon}_{Br,K}(pt,\Z/2).$$
\end{theorem}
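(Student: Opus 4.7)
Fix $b \geq 0$ and $q$ with $b+q \geq 0$; the strategy is to induct on the non-negative integer $|p-2q|$, taking the case $p = 2q$ as the base case, which is precisely Theorem~\ref{2q}.

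For the inductive step, apply the contravariant functor $[-,\, S^{a+p\sigma,\,b+q\sigma}\wedge M\ul{\Z/2}]_{\SH^{C_2}(\R)}$ to the fundamental motivic cofiber sequence \eqref{eqn:cof1}, namely $C_{2+}\to S^0 \to S^\sigma$, to obtain a long exact sequence
\begin{equation*}
\cdots\to H^{a+(p-1)\sigma,\, b+q\sigma}_{C_2}(\R)\to H^{a+p\sigma,\, b+q\sigma}_{C_2}(\R)\to H^{a+p,\, b+q}(\R,\Z/2)\to H^{a+1+(p-1)\sigma,\, b+q\sigma}_{C_2}(\R)\to\cdots
\end{equation*}
where the $C_{2+}$-term collapses to ordinary motivic cohomology because $C_2$ acts freely on $C_{2+}$ and hence the twists $p\sigma, q\sigma$ reduce to their underlying integer shifts (this is precisely the free-action property listed in the introduction). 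Since Betti realization is a triangulated functor sending \eqref{eqn:cof1} to the topological cofiber sequence \eqref{eqn:cof3}, we obtain the analogous long exact sequence in the $RO(C_2\times\Sigma_2)$-graded Bredon cohomology of a point, together with a commutative ladder comparing the two.

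Because $b+q \geq 0$, Proposition \ref{classic} (or equivalently Proposition \ref{prop:ptiso}) shows that the realization map on the ordinary motivic cohomology term $H^{a+p,\,b+q}(\R,\Z/2)$ is an isomorphism for every $a$. For $p > 2q$ we induct upward on $p-2q$: the inductive hypothesis (applied for all $a$) gives the isomorphism at $p-1$, and the 5-lemma applied to the ladder then yields it at $p$. For $p < 2q$ we induct downward on $2q-p$: the inductive hypothesis gives the isomorphism at $p+1$ (for all $a$), and the 5-lemma again transfers it to $p$.

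The main technical subtlety is ensuring the long exact sequences are genuinely compatible under realization; this is automatic from the triangulated nature of realization and the equality $\Re(C_{2+}) = C_{2+}$ for the appropriate $C_2 \subset K$, so the inductive 5-lemma step is then formal. The only non-formal input is the identification of the $C_{2+}$-term with ordinary motivic cohomology in arbitrary bidegree $(a+p\sigma,\,b+q\sigma)$, but this follows from $C_{2+}\wedge S^{m\sigma} \simeq C_{2+}\wedge S^m$ and $C_{2+}\wedge S^{n\sigma}_t \simeq C_{2+}\wedge S^n_t$ in $\SH^{C_2}(\R)$, both of which hold because inducing up from the trivial subgroup forgets the equivariant structure.
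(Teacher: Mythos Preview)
Your proof is correct and follows essentially the same approach as the paper: both use Theorem~\ref{2q} as the base case $p=2q$, then induct via the five-lemma applied to the comparison ladder coming from the cofiber sequence \eqref{eqn:cof1} and its realization \eqref{eqn:cof3}, with Proposition~\ref{classic} handling the $C_{2+}$-terms since $b+q\geq 0$. The paper treats the upward ($p>2q$) and downward ($p<2q$) inductions in two separate diagrams rather than packaging them as induction on $|p-2q|$, but the content is identical.
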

\begin{proof} We have the following diagram for $p+1=2q$ and arbitrary $a$:
\[
\xymatrixrowsep{0.3in}
\xymatrixcolsep{0.17in}
\xymatrix@-0.9pc{
H^{a+(p+1)\sigma, b+q\sigma}_{C _2}(\R,\Z/2) \ar[r]\ar[d]^{\simeq} & 
H^{a+p+1, b+q}(\R,\Z/2) \ar[r] \ar[d]^{\simeq} & 
H^{a+1+p\sigma, b+q\sigma}_{C _2}(\R,\Z/2)\ar[r] \ar[d] &
H^{a+1+(p+1)\sigma,b+q\sigma} _{C _2}(\R,\Z/2)\ar[d]^{\simeq}\ar[r]&
H^{a+p+2,b+q}(\R,\Z/2)\ar[d]^{\simeq}
\\ 
H^{\st}_{Br,K}(pt,\Z/2)\ar[r] &
H^{\st}_{Br,K}(pt,\Z/2) \ar[r] & 
H^{\st}_{Br,K}(pt,\Z/2)  \ar[r]&
H^{\st} _{Br,K}(pt,\Z/2)\ar[r]&
H^{\st} _{Br,K}(pt,\Z/2)
}
\]
and conclude from five lemma, Proposition \ref{2q} and Proposition \ref{classic} the isomorphism of the realization maps for $p=2q-1$ and arbitrary $a$. Downward induction concludes the theorem for $p<2q$. 

For the case $p=2q$ and arbitrary $a$ we use the diagram:
\[
\xymatrixrowsep{0.3in}
\xymatrixcolsep{0.25in}
\xymatrix@-0.9pc{
H^{a-1+p, b+q}(\R,\Z/2) \ar[r] \ar[d]^{\simeq} & 
H^{a+p\sigma, b+q\sigma}_{C _2}(\R,\Z/2)\ar[r] \ar[d]^{\simeq}&
H^{a+(p+1)\sigma,b+q\sigma} _{C _2}(\R,\Z/2)\ar[d]\ar[r]&
H^{a+p+1,b+q}(\R,\Z/2)\ar[r]\ar[d]^{\simeq}&
H^{a+1+p\sigma,b+q\sigma} _{C _2}(\R,\Z/2)\ar[d]^{\simeq} 
\\ 
H^{\st}_{Br,K}(pt,\Z/2) \ar[r] & 
H^{\st}_{Br,K}(pt,\Z/2)  \ar[r]&
H^{\st} _{Br,K}(pt,\Z/2)\ar[r]&
H^{\st} _{Br,K}(pt,\Z/2)\ar[r]&
H^{\st} _{Br,K}(pt,\Z/2).
}
\]
Using five lemma, Proposition \ref{2q} and Proposition \ref{classic} we conclude that the realization maps are isomorphic for $p=2q+1$ and arbitrary $a$. Upward induction concludes the theorem for $p>2q$.  
\end{proof}
\subsection{The case $b\geq 0$ and $b+q<0$ revisited}

Consider the case $b\geq 0$ and $b+q<0$ and we use the ideas from the previous section to discuss the realization maps in this range. We notice from the proof of Theorem \ref{2q} that if $q<0$ and $b\geq 0$ we  have a diagram
 \[
\xymatrixrowsep{0.3in}
\xymatrixcolsep{0.25in}
\xymatrix@-0.9pc{
H^{a-1, b}_{C _2}(\R,\Z/2) \ar[r]\ar[d] & 
H^{a-1, b}_{C_2}(W _{-q},\Z/2) \ar[r] \ar[d]& 
\rH^{a, b}_{C _2}(T^{-q},\Z/2)\ar[r] \ar[d] &
H^{a,b} _{C _2}(\R,\Z/2)\ar[d]\ar[r]&
H^{a,b}_{C _2}(W_{-q},\Z/2)\ar[d]
\\ 
H^{a-b-1+b\sigma}_{Br,C_2}(pt)\ar[r] &
H^{a-b-1+b\sigma}_{Br,C_2}(S(-q\sigma-q\sigma\otimes \epsilon)) \ar[r] & 
\rH^{a-b+b\sigma}_{Br,C_2}(S^{-q\sigma-q\sigma\otimes \epsilon})  \ar[r]&
H^{a-b+b\sigma} _{Br,C_2}(pt)\ar[r]&
H^{a-b+b\sigma} _{Br,C_2}(S(-q\sigma-q\sigma\otimes\epsilon)).
}
\]
It implies from 5-lemma that the realization maps $$H^{a+2q\sigma,b+q\sigma}_{C_2}(\R)\rightarrow H^{a-b+q\sigma+b\epsilon+q\sigma\otimes\epsilon}_{Br,K}(pt),$$ given by the middle vertical map above, are isomorphisms if $a\leq 2b+1$. 

If $a\geq 2b+2$ the maps are trivially injective from [\cite{HOV2}, Proposition 2.9] because the domain is zero. 
%When $a=2b+1$, $b+q<0$ then $$H^{a+2q\sigma,b+q\sigma}_{C_2}(\R)\simeq \rH^{a+2q\sigma,b+q\sigma}_{C_2}(\wt \EG C_2)\simeq \rH^{2b,b}(\BG C_2)\simeq \Z/2$$ if $b>0$ and $=0$ if $b=0$. 

Because of the periodicity $(\sigma,0)$ in this range for Bredon motivic cohomology of a real numbers we conclude the following:
\begin{proposition} \label{ncinj2}Let $b\geq 0$ and $b+q<0$. Then the realization maps $$H^{a+p\sigma,b+q\sigma}_{C_2}(\R)\rightarrow H^{a-b+(p-q)\sigma+b\epsilon+q\sigma\otimes\epsilon}_{Br,K}(pt)$$ are monomorphisms if $p\leq 2q$.
\end{proposition}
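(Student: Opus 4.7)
I would proceed by downward induction on $p$, starting from the base case $p = 2q$ established in the discussion immediately above, and using the $(\sigma,0)$-periodicity of the domain in the red cone to step to smaller $p$.

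For the base case, I would invoke the five-lemma diagram (just displayed above the statement) comparing the motivic $W_{-q}$-cofiber sequence with its topological realization to conclude that at $p = 2q$ the realization is an isomorphism for $a \leq 2b+1$; for $a \geq 2b+2$ the domain vanishes by [\cite{HOV2}, Proposition 2.9] (using Proposition \ref{v2n} at $a = 2b+1$), so the map is trivially injective.

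For the inductive step, I would combine Proposition \ref{x}~b) with Proposition \ref{tilde} to see that multiplication by $\sigma \in H^{\sigma,0}_{C_2}(\R,\Z/2)$ is invertible on $H^{\star,b+q\sigma}_{C_2}(\R,\Z/2)$ throughout the red cone, while the realization formula $\Re(S^{a+p\sigma,b+q\sigma}) \simeq S^{a-b+(p-q)\sigma+b\epsilon+q\sigma\otimes\epsilon}$ identifies $\Re(\sigma)$ with the generator $x_1$ of $H^{\sigma}_{Br}(pt.,\Z/2)$. This gives, at each $p$, a commutative square whose top row is multiplication by $\sigma$ (an isomorphism), whose bottom row is multiplication by $x_1$, and whose vertical arrows are the realization maps --- call them $\phi_p$ and $\phi_{p+1}$ --- in degrees $p$ and $p+1$ respectively. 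Assuming $\phi_{p+1}$ injective, if $y \in \ker \phi_p$, then $\phi_{p+1}(\sigma y) = x_1 \phi_p(y) = 0$, hence $\sigma y = 0$, and invertibility of multiplication by $\sigma$ on the domain forces $y = 0$. Iterating downward from $p = 2q$ yields injectivity for every $p \leq 2q$.

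The main subtlety --- not so much an obstacle as a directional constraint --- is the asymmetry of this argument: the analogous upward induction would require multiplication by $x_1$ to be injective on the $RO(K)$-graded Bredon cohomology of a point, which fails in general (e.g.\ $\Theta x_1 = 0$ and $\iota_2 x_1 = 0$ among the relations recorded in Section 2.2). This is precisely why the statement is one-sided in $p$, and one does not get an equally direct extension to $p > 2q$ by the same periodicity argument.
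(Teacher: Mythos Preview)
Your proof is correct and follows essentially the same approach as the paper: the base case $p=2q$ is exactly the five-lemma argument set up in the paragraphs preceding the statement, and the paper's one-line justification (``Because of the periodicity $(\sigma,0)$ in this range for Bredon motivic cohomology of a point we conclude the following'') is precisely your downward induction via the commutative square with $\sigma$ on top and $x_1$ on the bottom. Your explicit observation about the asymmetry --- that upward induction fails because multiplication by $x_1$ is not injective on $H^{\st}_{\Br,K}(\pt)$ --- is a useful clarification the paper leaves implicit. One minor quibble: the parenthetical invocation of Proposition~\ref{v2n} at $a=2b+1$ is misplaced, since that proposition concerns the range $b\geq -q>0$ rather than the red cone $b+q<0$; in the present setting the case $a=2b+1$ is already inside the isomorphism range furnished by the five-lemma, so no separate vanishing is needed there.
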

In the case $a\geq 2b+2$ the maps in Proposition \ref{ncinj2} are monomorphisms because by the periodicity the domain is zero. Proposition \ref{ncinj2} is equivalent with part of Proposition \ref{ncinj} in the view of Corollary \ref{itilde}. The above realization maps are monomorphisms in the case $p>2q$, $a\leq 2b+1$ from Proposition \ref{ncinj}. In particular, it shows that the realization maps in the region $b+q<0$, $b\geq 1$ of Figure \ref{TEST1} are all monomorphisms.

\begin{remark}\label{ncc} Some of the realization maps in the region $b+q<0$, $b\geq 1$ of Figure \ref{TEST1} are monomorphisms, but not isomorphisms. For example, in the case $a=2b+1$, according to Proposition \ref{ncinj}, we have the following short exact sequence:
$$0\rightarrow \rH^{2+\epsilon-3\sigma\otimes\epsilon}_{Br,K}(\tilde{E}_{C_2}C_2)=\Z/2\rightarrow H^{2+\epsilon-3\sigma\otimes\epsilon}_{Br,K}(pt)=\Z/2\oplus\Z/2\rightarrow H^{2+\epsilon-3\sigma\otimes\epsilon}_{Br,K}(E_{C_2}C_2)=\Z/2\rightarrow 0,$$
thus the realization map in motivic bidegree $(3-3\sigma,1-3\sigma)$ is a nontrivial monomorphism which is not surjective. 

If we compute the realization map in motivic bidegree $(3-2\sigma,1-2\sigma)$ we see that it is an isomorphism. Indeed, from Proposition \ref{ncinj}, we have the following short exact sequence:
$$0\rightarrow \rH^{2+\epsilon-2\sigma\otimes\epsilon}_{Br,K}(\tilde{E}_{C_2}C_2)=\Z/2\rightarrow H^{2+\epsilon-2\sigma\otimes\epsilon}_{Br,K}(pt)=\Z/2\rightarrow H^{2+\epsilon-2\sigma\otimes\epsilon}_{Br,K}(E_{C_2}C_2)=0\rightarrow 0.$$

%In the case $a\leq 2b$ we have for example
%$$0\rightarrow \rH^{-1+\sigma+4\epsilon-5\sigma\otimes\epsilon}_{Br}(\tilde{E}_{C_2}C_2)=\Z/2\oplus\Z/2\rightarrow$$ $$\rightarrow H^{-1+\sigma+4\epsilon-5\sigma\otimes\epsilon}_{Br}(pt)=\Z/2\oplus\Z/2\rightarrow H^{-1+\sigma+4\epsilon-5\sigma\otimes \epsilon}_{Br}(E_{C_2}C_2)=0\rightarrow 0,$$
%thus the realization map in bidegree $(5-4\sigma,4-5\sigma)$ is an isomorphism;
\end{remark}

\section{Bredon motivic cohomology of $\EG C_2$}
In this section we completely compute the Bredon motivic cohomology groups and ring of $\EG C_2$ over the real numbers. This is Theorem \ref{Borel} below. The methods we use here will also reprove in an easier way the computation of Borel motivic cohomology ring of the complex numbers given in \cite{HOV2}. As an application, we prove in this section that all the realization maps for Bredon motivic cohomology of real numbers in the range $b<0$, $b+q\geq 0$ are monomorphisms (Corollary \ref{EG} below). According to Remark \ref{tcc} below, they are not necessary isomorphisms.
\begin{proposition} \label{2qEG} The realization map
$$H^{a+2q\sigma,b+q\sigma}_{C _2}(\EG C _2,\Z/2)\rightarrow H^{a-b+q\sigma+b\epsilon+q\sigma\otimes \epsilon}_{Br,K}(E_{\Sigma _2}C _2,\Z/2),$$ is an isomorphism for any $a\leq 2b+2$, $b+q\geq 0$. For $b+q<0$ or $a\geq 2b+1$ the realization map is zero because the domain is zero.  
\end{proposition}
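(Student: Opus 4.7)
The plan is to reduce both sides to computations that are already available and then assemble the isomorphism range from Corollary \ref{cr} together with the vanishing provided by Voevodsky's theorem on the motivic side and Lemma \ref{gap} on the topological side.

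First, I would use the motivic $\kappa_2$-periodicity of $\EG C_2$, of bidegree $(2\sigma-2,\sigma-1)$, to rewrite
\[
H^{a+2q\sigma,\,b+q\sigma}_{C_2}(\EG C_2,\Z/2)\;\cong\;H^{a+2q,\,b+q}_{C_2}(\EG C_2,\Z/2),
\]
and then invoke the free $C_2$-action on $\EG C_2$ to identify this further with $H^{a+2q,\,b+q}(\BG C_2,\Z/2)$. Voevodsky's Theorem \ref{comp2} then describes this ring explicitly, and by a direct degree count on $\tau^a\rho^b s^j t^k$ it vanishes precisely when either $b+q<0$ or $(a+2q)>2(b+q)$, i.e.\ $a\geq 2b+1$. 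This is exactly the vanishing statement in the proposition.

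Second, I would perform the analogous reduction on the topological side. The Betti realization of $\kappa_2$ provides a $(-1+\sigma-\epsilon+\sigma\otimes\epsilon)$-periodicity for $E_{\Sigma_2}C_2$, and multiplying by $\kappa_2^{-q}$ yields
\[
H^{a-b+q\sigma+b\epsilon+q\sigma\otimes\epsilon}_{Br}(E_{\Sigma_2}C_2)\;\cong\;H^{(a-b+q)+(b+q)\epsilon}_{Br}(E_{\Sigma_2}C_2).
\]
The free $C_2$-action on $E_{\Sigma_2}C_2$ then identifies this with $H^{(a-b+q)+(b+q)\epsilon}_{Br}(B_{\Sigma_2}C_2,\Z/2)$. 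Under these two matching identifications, the realization map of the proposition becomes precisely the integer-bidegree realization map
\[
H^{a+2q,\,b+q}(\BG C_2,\Z/2)\;\longrightarrow\;H^{(a+2q)-(b+q)+(b+q)\epsilon}_{Br}(B_{\Sigma_2}C_2,\Z/2),
\]
which is the map studied in Corollary \ref{cr}. Setting $A=a+2q$ and $B=b+q$, the corollary gives an isomorphism as soon as $A\leq 2B$, i.e.\ $a\leq 2b$, covering the bulk of the asserted range.

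Finally, for the two extra boundary lines $a=2b+1$ and $a=2b+2$ the motivic side is already zero by the first paragraph, so it suffices to check that the topological target also vanishes there. With $A=a-b+q$ and $B=b+q$, the conditions $a=2b+1$ and $a=2b+2$ translate to $B=A-1$ and $B=A-2$ respectively, which are exactly the two lines along which Lemma \ref{gap} forces $H^{A+B\epsilon}_{Br}(B_{\Sigma_2}C_2,\Z/2)=0$. Hence at these boundaries the realization is a trivial isomorphism between two zero groups, extending the range from $a\leq 2b$ up to $a\leq 2b+2$. The main subtlety, not really an obstacle, is simply to track carefully how the motivic $\kappa_2$ and its Betti realization intertwine the two periodicities so that the two reductions to $\BG C_2$ and $B_{\Sigma_2}C_2$ line up under Corollary \ref{cr}; once that bookkeeping is in place the proposition is an immediate consequence of Corollary \ref{cr}, Lemma \ref{gap} and Theorem \ref{comp2}.
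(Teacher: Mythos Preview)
Your proposal is correct and follows essentially the same approach as the paper: both reduce via the $\kappa_2$-periodicity on the motivic and topological sides to the integer-bidegree realization map $H^{a+2q,b+q}(\BG C_2)\to H^{(a-b+q)+(b+q)\epsilon}_{Br}(B_{\Sigma_2}C_2)$, then invoke Corollary~\ref{cr} for the range $a\leq 2b$, Lemma~\ref{gap} for the two boundary lines $a=2b+1,\,2b+2$, and the vanishing of motivic cohomology of $\BG C_2$ (Theorem~\ref{comp2}) for $b+q<0$ or $a\geq 2b+1$. The paper packages the compatibility of the two $\kappa_2$-periodicities into a single commutative square, which is exactly the bookkeeping you flag in your final paragraph.
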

\begin{proof}
 According to the periodicity of the Bredon  motivic cohomology of $\EG C _2$ we have that 
 $$H^{a+2q\sigma,b+q\sigma}_{C _2}(\EG C _2,\Z/2)\simeq H^{a+2q,b+q}_{C _2}(\EG C _2,\Z/2)\simeq H^{a+2q,b+q} (\BG C _2,\Z/2).$$
 Because of the vanishing of motivic cohomology (see \cite{MVW}) we have that the above isomorphisms are zero if $b+q<0$ or if $a\geq 2b+1$. Also, because of Lemma \ref{gap}, periodicity and the isomorphisms below, the codomains are also zero when $a=2b+1$ or $a=2b+2$. 
 
 Looking to the realization maps for $b+q\geq 0$ and using Corollary \ref{cr}, we have the following diagram
\[
\xymatrixrowsep{0.4in}
\xymatrixcolsep{0.35in}
\xymatrix@-0.9pc{
H^{a+2q\sigma,b+q\sigma}_{C _2}(\EG C _2,\Z/2)\ar[r]^{\stackrel{\kappa_2^q}{\simeq}}\ar[d]&
H^{a+2q,b+q}_{C _2}(\EG C _2,\Z/2)\ar[d]^{\simeq}\ar[r]^{\simeq}&
H^{a+2q,b+q}(\BG C _2,\Z/2)\ar[d]^{\simeq}
\\ 
H^{a-b+q\sigma+b\epsilon+q\sigma\otimes \epsilon} _{Br,K}(E_{\Sigma_2}{C _2},\Z/2)\ar[r]^{\stackrel{\kappa_2^q}{\simeq}}&
H^{a-b+q+(b+q)\epsilon} _{Br,C_2}(E_{\Sigma_2}{C _2},\Z/2)\ar[r]^{\simeq}&
H^{a-b+q+(b+q)\epsilon} _{Br,C_2}(B _{\Sigma _2}{C _2},\Z/2).
}
\]
The right vertical map is an isomorphism if $a\leq 2b+2$ from Corollary \ref{cr} and Corollary \ref{comp3}. We notice that in the case $a=2b+1$ or $a=2b+2$ both cohomologies of the right vertical map are zero. The upper horizontal maps and the left horizontal map are isomorphisms from periodicity and properties of Borel motivic cohomology (\cite{HOV1} Proposition 3.16). The right lower horizontal map is an isomorphism because $E_{\Sigma_2}{C_2}$ is a $C_2\times \Sigma_2$ topological space and therefore $$H^{a-b+q+(b+q)\epsilon} _{Br,C_2}(E_{\Sigma_2}{C _2},\Z/2)\simeq H^{a-b+q+(b+q)\epsilon} _{Br,C_2}(E_{\Sigma_2}{C _2}/C_2,\Z/2)=H^{a-b+q+(b+q)\epsilon} _{Br,C_2}(B_{\Sigma_2}{C _2},\Z/2).$$
It implies that the left vertical map is an isomorphism for $a\leq 2b+2$ and $b+q\geq 0$.
\end{proof}
\begin{proposition} Let $b+q\geq 0$. The realization map induces an isomorphism 
$$H^{a+p\sigma,b+q\sigma}_{C _2}(\EG C _2,\Z/2)\simeq H^{a-b+(p-q)\sigma+b\epsilon+q\sigma\otimes \epsilon}_{Br,K}(E_{\Sigma _2}C _2,\Z/2)$$ for any $a\leq 2b+2$. 
\end{proposition}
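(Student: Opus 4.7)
The plan is to extend the $p=2q$ base case provided by Proposition \ref{2qEG} to arbitrary $p$ by mimicking the proof of Theorem \ref{posisom}, with $\EG C_2$ (resp.\ $E_{\Sigma_2}C_2$) in place of $\spec\R$ (resp.\ $\pt$). Concretely, I would smash the motivic cofiber sequence \eqref{eqn:cof1} with $\EG C_2$ and the topological cofiber sequence \eqref{eqn:cof3} with $E_{\Sigma_2}C_2$, and compare the resulting long exact sequences under Betti realization.

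The central ingredient is the identification of the ``free'' term in each sequence. On the motivic side, $C_{2+}\wedge \EG C_2$ is free as a $C_2$-space, so by the free-action property its Bredon motivic cohomology reduces (in integer bidegree) to ordinary motivic cohomology
\[
H^{a+p,b+q}\bigl((C_{2+}\wedge \EG C_2)/C_2,\Z/2\bigr) \cong H^{a+p,b+q}(\BG C_2,\Z/2),
\]
which is Voevodsky's ring from Theorem \ref{comp2}; on the topological side, $C_{2+}\wedge E_{\Sigma_2}C_2$ likewise descends to $B_{\Sigma_2}C_2$, whose Bredon cohomology is Kriz's ring from Theorem \ref{comp1}. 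By Corollary \ref{cr}, the realization map between these integer-indexed cohomologies is an isomorphism whenever $a+p\leq 2(b+q)$, and for the two lines $a=2b+1,\,2b+2$ both sides vanish by the combined vanishing in motivic cohomology and Lemma \ref{gap}, so the realization is trivially an isomorphism there as well.

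With this input in hand, I would set up the two five-lemma diagrams appearing in the proof of Theorem \ref{posisom}, but with $\EG C_2$ and $E_{\Sigma_2}C_2$ replacing $\R$ and $\pt$. Concretely, for the downward induction $p<2q$ one uses the diagram whose four known vertical isomorphisms are two instances of the integer-indexed comparison just described and two instances of the inductive hypothesis (anchored at $p=2q$ by Proposition \ref{2qEG}); for the upward induction $p>2q$ one shifts the diagram accordingly. Since the constraint $a\leq 2b+2$ is preserved under both cofiber sequences (the shifts in $a$ and $b$ keep one in the prescribed range, and the integer-indexed comparison is available throughout), the five lemma closes the argument in both directions.

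The main obstacle will be bookkeeping: one must verify that in each step of the induction the shifted bidegrees $(a,b,p,q)$ staying in the region $a\leq 2b+2$ still allow the four non-central vertical maps to be isomorphisms, in particular that the two neighbouring integer-indexed realization maps land in the range $a\leq 2b+2$ where Corollary \ref{cr} together with the gap vanishing applies. Everything else is a direct translation of the proof of Theorem \ref{posisom}.
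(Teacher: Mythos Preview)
Your overall strategy (smash the cofiber sequence \eqref{eqn:cof1} with $\EG C_{2+}$, compare via realization, and induct on $p$ from the base case $p=2q$) is exactly the paper's strategy. However, your identification of the ``free'' term is wrong, and this is not a cosmetic slip: it is the one place where the argument could break, and with your identification it does.

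You claim that $(C_{2+}\wedge \EG C_{2+})/C_2\simeq \BG C_2$. This is false. The $C_2$-action on $C_2\times \EG C_2$ is the \emph{diagonal} one, and the shearing isomorphism $(g,x)\mapsto (g,g^{-1}x)$ identifies it $C_2$-equivariantly with $C_2\times (\EG C_2)^{\mathrm{triv}}$; passing to the quotient gives the underlying space of $\EG C_2$, which is contractible. Equivalently, and this is how the paper phrases it, $\EG C_{2+}\wedge C_{2+}\simeq C_{2+}$ because $\EG C_2$ is non\-equivariantly contractible. Hence the free term has Bredon motivic cohomology
\[
\rH^{a+p\sigma,b+q\sigma}_{C_2}(C_{2+},\Z/2)\;\cong\; H^{a+p,\,b+q}(\R,\Z/2),
\]
not $H^{a+p,b+q}(\BG C_2,\Z/2)$; likewise on the topological side one gets $H^{(a+p)-(b+q)+(b+q)\epsilon}_{\Br}(\pt)$, not the cohomology of $B_{\Sigma_2}C_2$.

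This distinction is exactly what makes the induction go through. With the correct identification, the comparison map on the free term is governed by Proposition~\ref{classic}, which is an isomorphism for all $a+p$ as soon as $b+q\geq 0$: there is \emph{no} constraint involving $a+p$, so the induction on $p$ proceeds unobstructed. With your identification, you would need Corollary~\ref{cr}, whose hypothesis is $a+p\leq 2(b+q)$; but in the induction you hold $a,b,q$ fixed and let $p$ run, so this hypothesis is eventually violated and the five-lemma input fails. Your closing remark that ``the constraint $a\leq 2b+2$ is preserved'' conflates the fixed quantity $a-2b$ with the moving quantity $(a+p)-2(b+q)$; the latter is what Corollary~\ref{cr} requires, and it is not preserved.

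In short: replace $\BG C_2$ by a point (using the non\-equivariant contractibility of $\EG C_2$), invoke Proposition~\ref{classic} instead of Corollary~\ref{cr} for the free term, and your sketch becomes the paper's proof.
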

\begin{proof}
 We have the following diagram:
\[
\xymatrixrowsep{0.3in}
\xymatrixcolsep{0.25in}
\xymatrix@-0.9pc{
H^{a-1+2q\sigma, b+q\sigma}_{C _2}(\EG C _2) \ar[r]\ar[d]^{\simeq} & 
H^{a-1+2q, b+q}(\R)\ar[r] \ar[d]^{\simeq} & 
H^{a+(2q-1)\sigma, b+q\sigma}_{C _2}(\EG C _2)\ar[r] \ar[d] &
H^{a+2q\sigma,b+q\sigma} _{C _2}(\EG C _2)\ar[d]^{\simeq}\ar[r]&
H^{a+2q,b+q}(\R)\ar[d]^{\simeq}
\\ 
H^{\st}_{Br,K}(E _{\Sigma_2}C _2)\ar[r] &
H^{\st}_{Br,K}(pt) \ar[r] & 
H^{\st}_{Br,K}(E_{\Sigma_2}C_2)  \ar[r]&
H^{\st} _{Br,K}(E _{\Sigma_2}C _2)\ar[r]&
H^{\st} _{Br,K}(pt)
}
\]
The upper sequence is induced by the motivic $C _2$-cofiber sequence $$\EG C _{2+}\wedge C _{2+}\simeq C _{2+}\rightarrow \EG C _{2+}\rightarrow \EG C _{2+}\wedge S^\sigma.$$ The lower sequence is induced by the $C _2\times \Sigma_2$-equivariant cofiber sequence induced by the above cofiber sequence through realization
$$E _{\Sigma_2}C _{2+}\wedge C _{2+}\simeq C _{2+}\rightarrow E_{\Sigma_2}C _{2+}\rightarrow 
E_{\Sigma_2}C _{2+}\wedge S^\sigma.$$
Here we used that $\EG C _2$ is non-equivariantly contractible and that the isomorphism of the left term of the motivic cofiber sequence commutes with the realization (see \cite{HOV1}).
 
The first and fourth horizontal maps are isomorphisms from Proposition \ref{2qEG} because $b+q\geq 0$. The second and fourth maps are isomorphisms from Proposition \ref{classic}. Therefore the middle map of the diagram is an isomorphism for  $a\leq 2b+2$ by  five lemma. Now downward induction concludes that the cycle map from the statement of the proposition is an isomorphism for $p\leq 2q$, $a\leq 2b+2$. Upward induction in the diagram below concludes the case $p>2q$, $a\leq 2b+2$.
\[
\xymatrixrowsep{0.3in}
\xymatrixcolsep{0.25in}
\xymatrix@-0.9pc{
H^{a+2q, b+q}_{C_2}(\R) \ar[r]\ar[d]^{\simeq} & 
H^{a+2q\sigma, b+q\sigma}_{C_2}(\EG C_2)\ar[r] \ar[d]^{\simeq} & 
H^{a+(2q+1)\sigma, b+q\sigma}_{C _2}(\EG C _2)\ar[r] \ar[d] &
H^{a+1+2q,b+q} _{C _2}(\R)\ar[d]^{\simeq}\ar[r]&
H^{a+1+2q\sigma,b+q\sigma}_{C_2}(\EG C_2)\ar[d]^{\simeq}
\\ 
H^{\st}_{Br,K}(pt)\ar[r] &
H^{\st}_{Br,K}(E _{\Sigma_2}C _2) \ar[r] & 
H^{\st}_{Br,K}(E_{\Sigma_2}C_2)  \ar[r]&
H^{\st} _{Br,K}(pt)\ar[r]&
H^{\st} _{Br,K}(E _{\Sigma_2}C _2)
}
\]

\end{proof}
The following theorem computes all of the Borel motivic cohomology groups of the real numbers. 
\begin{theorem} \label{Borel} $H^{a+p\sigma,b+q\sigma} _{C _2}(\EG C _2,\Z/2)\simeq H^{a-2b+(p-q+b)\sigma+(b+q)\sigma\otimes\epsilon} _{Br,K}(pt,\Z/2)$ for any $b+q\geq 0$. For $b+q<0$ we have $H^{a+p\sigma,b+q\sigma} _{C _2}(\EG C _2,\Z/2)=0$.
\end{theorem}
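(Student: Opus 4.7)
The plan is to combine the preceding Proposition with two further reductions: topological $\kappa_2$-periodicity on $E_{\Sigma_2}C_2$ to eliminate the $\epsilon$-coefficient of the target, and the topological $C_2\times\Sigma_2$-isotropy sequence together with Theorem \ref{isov} to pass from $E_{\Sigma_2}C_2$-cohomology to cohomology of a point.

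For the vanishing when $b+q<0$, I would argue as follows. On the motivic side, $\kappa_2$-periodicity reduces to integer motivic weight $b+q<0$; the long exact sequence induced by $C_{2+}\to S^0\to S^\sigma$ smashed with $\EG C_{2+}$, together with $H^{*,b+q}(\R,\Z/2)=0$, gives
\[
H^{a+p\sigma,b+q\sigma}_{C_2}(\EG C_2,\Z/2)\cong H^{a+2q,b+q}(\BG C_2,\Z/2)=0.
\]
On the topological side, the corresponding vanishing of $H^{a-2b+(p-q+b)\sigma+(b+q)\sigma\otimes\epsilon}_{Br}(pt,\Z/2)$ follows from the Holler--Kriz Poincar\'e series formulas in Propositions \ref{PC}--\ref{NC}.

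For $b+q\ge 0$ and $a\le 2b+2$, the preceding Proposition gives
\[
H^{a+p\sigma,b+q\sigma}_{C_2}(\EG C_2)\simeq H^{a-b+(p-q)\sigma+b\epsilon+q\sigma\otimes\epsilon}_{Br}(E_{\Sigma_2}C_2,\Z/2).
\]
Multiplication by the invertible class $\kappa_2^b$ of degree $b(-1+\sigma-\epsilon+\sigma\otimes\epsilon)$ on $E_{\Sigma_2}C_2$ translates the target degree to $W:=a-2b+(p-q+b)\sigma+(b+q)\sigma\otimes\epsilon$, which has vanishing $\epsilon$-coefficient and non-negative $\sigma\otimes\epsilon$-coefficient equal to $b+q$. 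The topological isotropy long exact sequence, with Theorem \ref{isov} (second statement) applied at both $W$ and $W+1$ (``$p,q$ not both negative'' holds since $b+q\ge 0$), combined with the auxiliary vanishing of $\tilde H^{W+1}_{Br}(\tilde E_{\Sigma_2}C_2)$ (which follows from the non-equivariant contractibility of $E_{\Sigma_2}C_2$ via the ordinary cohomology of a point), yields $H^W_{Br}(pt,\Z/2)\simeq H^W_{Br}(E_{\Sigma_2}C_2,\Z/2)$ and hence the desired iso.

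To extend to $a>2b+2$ with $b+q\ge 0$, I would run a five-lemma induction on $p$ comparing the Betti realization of the motivic LES above with the analogous topological LES obtained by smashing $C_{2+}\to S^0\to S^\sigma$ with $pt_+$ in $K$-equivariant cohomology; the ``motivic'' column $H^{a+p,b+q}(\R,\Z/2)\to H^{(a+p)-(b+q)+(b+q)\epsilon}_{Br}(pt,\Z/2)$ is an isomorphism by Proposition \ref{classic}. The main obstacle is that the preceding Proposition's intermediate iso with $H^{\st}_{Br}(E_{\Sigma_2}C_2)$ does not extend past $a\le 2b+2$ (for instance at $a=2b+3,\,p=2q$ the motivic domain vanishes while the intermediate topological target does not), so the induction must be run directly against the final target $H^W_{Br,K}(pt)$, requiring careful control via Theorem \ref{isov} and the Holler--Kriz computation of the vanishing on both sides at each inductive step.
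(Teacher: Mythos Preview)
Your approach has a genuine gap in the range $a>2b+2$, and the paper's proof avoids this gap by a different (and simpler) ordering of the reductions.

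The paper does \emph{not} start from the preceding Proposition (the realization iso for $\EG C_2$, valid only for $a\le 2b+2$). Instead it performs both reductions on the \emph{motivic} side first: motivic $\kappa_2^b$-periodicity gives
\[
H^{a+p\sigma,b+q\sigma}_{C_2}(\EG C_2)\;\simeq\;H^{a-2b+(p+2b)\sigma,(b+q)\sigma}_{C_2}(\EG C_2),
\]
and then the motivic isotropy sequence, together with $\rH^{*+*\sigma,0}_{C_2}(\wt\EG C_2)=0$ (and the $(0,\sigma)$-periodicity of $\wt\EG C_2$), gives
\[
H^{a-2b+(p+2b)\sigma,(b+q)\sigma}_{C_2}(\EG C_2)\;\simeq\;H^{a-2b+(p+2b)\sigma,(b+q)\sigma}_{C_2}(\R),
\]
unconditionally in $a$. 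Only at the end is realization applied, via Theorem~\ref{posisom}, which is an isomorphism on the blue region $b'=0,\ b'+q'=b+q\ge 0$ with no restriction on the first index. This yields the isomorphism for all $a$ in one step.

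By contrast, your route (realize first, then apply topological $\kappa_2^b$, then the topological isotropy map) cannot be completed for $a>2b+2$: the map $H^W_{Br}(pt)\to H^W_{Br}(E_{\Sigma_2}C_2)$ is genuinely \emph{not} an isomorphism there, since $\rH^{a-2b+1}_{Br}(\tilde E_{\Sigma_2}C_2)\ne 0$ once $a-2b+1\ge 4$ (this is exactly what Corollary~\ref{EG} later records as the realization being injective but not surjective for $a\ge 2b+3$). Thus there is no map from $H^\st_{Br}(E_{\Sigma_2}C_2)$ back to $H^W_{Br}(pt)$ against which to run your five-lemma, and any attempt to define the comparison map abstractly forces you back to the paper's motivic-side construction---at which point the induction is superfluous.

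Two smaller points: (i) the theorem only asserts that the \emph{left} side vanishes for $b+q<0$; the right side need not vanish (e.g.\ $H^{2-2\sigma\otimes\epsilon}_{Br}(pt)\cong\Z/2$), so your claimed topological vanishing from Holler--Kriz is both unnecessary and false in general; (ii) the vanishing of $\rH^{W+1}_{Br}(\tilde E_{\Sigma_2}C_2)$ you use for $a\le 2b+2$ does hold, but it comes from Corollary~\ref{comp3} (i.e.\ $\rH^n_{Br}(\tilde E_{\Sigma_2}C_2)=0$ for $n\le 3$), not from ``non-equivariant contractibility of $E_{\Sigma_2}C_2$''.
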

\begin{proof} The proof follows from the following diagram for $b+q\geq 0$. The upper box in Figure \ref{TEST4} is commutative because the  realization is a ring map, and the second box is commutative because it is given by the realization applied to the isotropy sequences. The top left vertical and top right vertical isomorphisms follow from the corresponding periodicities. The bottom horizontal map follows from the first part ($b,q\geq 0$ case) of the proof of Proposition \ref{posisom}. The left lower vertical map is an isomorphism from the motivic isotropy sequence together with [\cite{V}, Proposition 4.3] which says that $\rH^{a+p\sigma,0}(\wt\EG C _2)=0$ for any $a,p\in \Z,$ and the from the  $(0,\sigma)$ periodicity of the Bredon motivic cohomology $\wt\EG C _2$.
\begin{equation}
\label{TEST4}
\begin{tikzcd}
	{H^{a+p\sigma,b+q\sigma}_{C _2}(\EG C _2,\Z/2)} && {H^{a-b+(p-q)\sigma+b\epsilon+q\sigma\otimes\epsilon} _{Br,K}(E_{C _2}C _2,\Z/2)} \\
	{H^{a-2b+(p+2b)\sigma,(b+q)\sigma}_{C _2}(\EG C _2,\Z/2)} && {H^{a-2b+(p-q+b)\sigma+(b+q)\sigma\otimes\epsilon}_{Br,K}(E _{C _2}C _2,\Z/2)} \\
	{H^{a-2b+(p+2b)\sigma,(b+q)\sigma}_{C _2}(\R,\Z/2)} && {H^{a-2b+(p-q+b)\sigma+(b+q)\sigma\otimes\epsilon} _{Br,K}(pt,\Z/2)}
	\arrow[from=1-1, to=1-3]
	\arrow["{\simeq \kappa_2^{b}}"', from=1-1, to=2-1]
	\arrow["\simeq", from=3-1, to=2-1]
	\arrow["\simeq"', from=3-1, to=3-3]
	\arrow[from=3-3, to=2-3]
	\arrow[from=2-1, to=2-3]
	\arrow["{\kappa_2^{b}\simeq }", from=1-3, to=2-3]
\end{tikzcd}
\end{equation}
If $b+q<0$ then by periodicity we have $H^{a+p\sigma,b+q\sigma} _{C _2}(\EG C _2,\Z/2)\simeq H^{a+2q+(p-2q)\sigma,b+q}_{C_2}(\EG C_2,\Z/2)$. Then the last term is zero in the $b+q<0$ range from the proof of Proposition \ref{tilde}.
\end{proof}
\begin{corollary} \label{EG} Let $b+q\geq 0$. The realization map induces an isomorphism 
$$H^{a+p\sigma,b+q\sigma}_{C _2}(\EG C _2,\Z/2)\simeq H^{a-b+(p-q)\sigma+b\epsilon+q\sigma\otimes \epsilon}_{Br,K}(E_{\Sigma _2}C _2,\Z/2)$$ for any $a\leq 2b+2$ and a monomorphism (but not surjective) if $a\geq 2b +3$. 

Moreover if $b<0$ and $a\leq 2b+2$ there is an identification of the realization maps (in addition to the identification of cohomology groups) i.e.
\begin{equation}
\label{TEST8}
\begin{tikzcd}
	{H^{a+p\sigma, b+q\sigma}_{C _2}(\R,\Z/2)} && {H^{a+p\sigma, b+q\sigma} _{C _2}(\EG C _2,\Z/2)} \\
	\\
	{H^{a-b+(p-q)\sigma+b\epsilon+q\sigma\otimes\epsilon}_{Br,K}(pt,\Z/2)} && {H^{a-b+(p-q)\sigma+b\epsilon+q\sigma\otimes\epsilon}_{Br,K}(E _{\Sigma_2}C _2,\Z/2)}
	\arrow["\simeq", from=1-1, to=1-3]
	\arrow[from=1-1, to=3-1]
	\arrow["\simeq"', from=3-1, to=3-3]
	\arrow["\simeq", from=1-3, to=3-3]
\end{tikzcd}
\end{equation}
The horizontal maps are induced by the isotropy sequences. Notice that if $b<0, b+q\geq 0$ the Diagram \ref{TEST8} shows that the realization maps
$$H^{a+p\sigma, b+q\sigma}_{C _2}(\R,\Z/2)\hookrightarrow H^{a-b+(p-q)\sigma+b\epsilon+q\sigma\otimes\epsilon}_{Br,K}(pt,\Z/2)$$
are isomorphisms for $a\leq 2b+2$ and monomorphisms for $a\geq 2b+3$. 
\end{corollary}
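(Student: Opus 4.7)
The plan is to dispatch the three clauses of the corollary in turn. The first—that the realization is an isomorphism for $a\leq 2b+2$—is exactly the content of the preceding proposition, so no further work is needed there; the remaining tasks are the injectivity/non-surjectivity claim in the range $a\geq 2b+3$, and the compatibility diagram in Figure~\ref{TEST8}.

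For the injectivity when $a\geq 2b+3$, my approach is to use Theorem~\ref{Borel} to replace the domain $H^{a+p\sigma,b+q\sigma}_{C_2}(\EG C_2,\Z/2)$ by $H^{W}_{Br}(pt,\Z/2)$, where $W=a-2b+(p-q+b)\sigma+(b+q)\sigma\otimes\epsilon$; both sides vanish when $b+q<0$, so there is nothing to check there. Simultaneously, the $\kappa_2$-periodicity of the topological side lets me identify the target with $H^{W}_{Br}(E_{\Sigma_2}C_2,\Z/2)$, and after these two identifications the Betti realization becomes the canonical restriction induced by $E_{\Sigma_2}C_2\to pt$. Injectivity then follows from the long exact sequence of the topological $C_2\times\Sigma_2$-isotropy cofiber sequence once I verify that the connecting map $\tilde{H}^W_{Br}(\tilde{E}_{\Sigma_2}C_2,\Z/2)\to H^W_{Br}(pt,\Z/2)$ vanishes. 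Since $W$ has zero $\epsilon$-component and nonnegative $\sigma\otimes\epsilon$-component $b+q\geq 0$, the second half of Theorem~\ref{isov} applies and delivers exactly this vanishing. For non-surjectivity in the strict range $a\geq 2b+3$, I would exhibit explicit classes in $H^\st_{Br}(E_{\Sigma_2}C_2,\Z/2)$ involving the generators $x_1,x_3$ (absent from the image of the point restriction) in the appropriate bidegrees, using the presentation of this ring recalled in the proof of Theorem~\ref{ncinj}.

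For the compatibility diagram, I would observe that the top horizontal arrow is induced by the motivic isotropy cofiber sequence $\EG{C_2}_+\to pt_+\to\wt\EG C_2$ and the bottom arrow by its Betti realization, the topological $C_2\times\Sigma_2$-isotropy sequence; commutativity of the square is then naturality of realization. To see that the top map is an isomorphism when $b<0$, I combine the $(0,\sigma)$ and $(\sigma,0)$ periodicities of $\wt\EG C_2$ with Proposition~\ref{x}(a), which reduces matters to $\tilde{H}^{a,b}_{C_2}(\wt\EG C_2,\Z/2)$ with $b\leq 0$, a group that vanishes. The analogous vanishing of $\tilde{H}^{\st}_{Br}(\tilde{E}_{\Sigma_2}C_2,\Z/2)$ in the matching indices, needed to make the bottom map an isomorphism, follows from Corollary~\ref{itilde} together with the $\sigma$- and $\sigma\otimes\epsilon$-periodicities on the topological side. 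The final claim—that the $\R$-realization maps are isomorphisms for $a\leq 2b+2$ and monomorphisms for $a\geq 2b+3$ when $b<0$—is then a direct diagram chase.

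The principal obstacle in this plan is essentially bookkeeping: each appeal to Theorem~\ref{isov}, Proposition~\ref{x}(a), and Corollary~\ref{itilde} requires matching the correct representation degree after performing several $\kappa_2$-, $\sigma$-, and $\sigma\otimes\epsilon$-periodicity shifts, and in the injectivity step one must verify carefully that, under these identifications, the Betti realization really does coincide with the canonical restriction $H^W_{Br}(pt)\to H^W_{Br}(E_{\Sigma_2}C_2)$ induced by $E_{\Sigma_2}C_2\to pt$.
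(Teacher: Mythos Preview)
Your overall architecture matches the paper's: reduce via Figure~\ref{TEST4} (Theorem~\ref{Borel} plus $\kappa_2$-periodicity) to the restriction map $H^W_{Br}(pt)\to H^W_{Br}(E_{\Sigma_2}C_2)$ with $W$ having zero $\epsilon$-component, then feed the $C_2\times\Sigma_2$-isotropy long exact sequence with Theorem~\ref{isov} to get injectivity. That part is fine and is essentially what the paper does.

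There is, however, a genuine gap in your non-surjectivity argument. You propose to exhibit classes ``involving the generators $x_1,x_3$ (absent from the image of the point restriction)'', but $x_1$ and $x_3$ \emph{are} in the image: they lie in the positive cone of $H^\st_{Br,K}(pt)$ and map to themselves under the restriction. What is actually missing from the image at $W=(a-2b)+(p-q+b)\sigma+(b+q)\sigma\otimes\epsilon$ with $a-2b\geq 3$ are classes supported on negative powers of $\kappa_2$; concretely, elements of the form $\frac{\theta_2}{x_2^{n_2}y_2^{m_2}}\,x_1^{n_1}y_1^{m_1}x_3^{n_3}y_3^{m_3}\kappa_2^{-N}$ with $N=2+n_2+m_2$ so that the $\epsilon$-degree cancels to zero. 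The paper avoids hunting for such classes and instead closes the argument structurally: since the map $\rH^{a-2b}_{Br}(\tilde{E}_{\Sigma_2}C_2)\to H^W_{Br}(pt)$ vanishes (Theorem~\ref{isov}), the isotropy sequence yields a short exact sequence
\[
0\to H^W_{Br}(pt)\to H^W_{Br}(E_{\Sigma_2}C_2)\to \rH^{a-2b+1}_{Br}(\tilde{E}_{\Sigma_2}C_2)\to 0,
\]
and then checks directly from Theorem~\ref{comp1} that $\rH^{n}_{Br}(\tilde{E}_{\Sigma_2}C_2)\cong\rH^{n-1}_{Br}(B_{\Sigma_2}C_2)\neq 0$ for all $n\geq 4$ (e.g.\ $\Sigma\theta_2 cb$ generates the case $n=4$). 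This both establishes non-surjectivity and identifies the cokernel.

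A second, smaller issue: for the bottom isomorphism in Figure~\ref{TEST8} you invoke Corollary~\ref{itilde}, but that result lives in the red cone $b+q<0$ and says nothing here. After the $\sigma$- and $\sigma\otimes\epsilon$-periodicities you are left with $\rH^{(a-b)+b\epsilon}_{Br}(\tilde{E}_{\Sigma_2}C_2)$ at a \emph{negative} $\epsilon$-index $b<0$; the vanishing you need is exactly Corollary~\ref{comp3} (via $\rH^{\ast+\ast\epsilon}_{Br}(\tilde{E}_{\Sigma_2}C_2)\simeq\rH^{\ast-1+\ast\epsilon}_{Br}(B_{\Sigma_2}C_2)$), which gives zero for $a-b-1\leq b+2$ and $a-b\leq b+2$, i.e.\ $a\leq 2b+2$.
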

\begin{proof}
%The upper horizontal map in the isotropy sequence diagram
If $b<0$, from the motivic isotropy sequence \ref{eqn:cof2}, we have that
$$H^{a+p\sigma, b+q\sigma}_{C _2}(\R,\Z/2)\simeq H^{a+p\sigma, b+q\sigma} _{C _2}(\EG C _2,\Z/2)$$
are isomorphisms.
 %is an isomorphism if $a\leq 0$ and monomorphisms if $a=1$ because in this range (see Theorem \ref{x})
%$\rH^{a+p\sigma,b+q\sigma} _{C _2}(\wt \EG C _2,\Z/2)=0$ and $a\leq 2b+2\leq 0$. 

The lower horizontal map in the Diagram \ref{TEST8} is an isomorphism for $a\leq 2b+2$ and $b<0$ from the $C_2\times \Sigma_2$ topological isotropy sequence and Corollary \ref{comp3}.

We can see from Diagram \ref{TEST4}  that the lower right vertical map  is an isomorphism if $a\leq 2b+2$ because $\rH^{a}_{Br,K}(\tilde{E}_{\Sigma}C_2,\Z/2)=0$ if $a\leq 3$ (Corollary \ref{comp3}). The monomorphism of the realization map from the statement of the corollary, when $a\geq 2b+3$, follows from Theorem \ref{isov} ($b+q\geq 0$) and Diagram \ref{TEST4}. 

It follows that the realization maps for the Bredon motivic cohomology of $\EG C_2$ are isomorphisms if $a\leq 2b+2$ and monomorphisms for $a\geq 2b+3$. We conclude from Diagram \ref{TEST8} that the realization maps
$$H^{a+p\sigma, b+q\sigma}_{C _2}(\R,\Z/2)\rightarrow H^{a-b+(p-q)\sigma+b\epsilon+q\sigma\otimes\epsilon}_{Br,K}(pt,\Z/2)$$
are isomorphisms if $a\leq 2b+2$ and $b<0$. When $a>2b+3$ and $b<0$ these maps are monomorphisms because in Diagram \ref{TEST8} the right vertical maps are monomorphisms and the upper horizontal maps are isomorphisms.

From the $C_2\times \Sigma_2$ topological isotropy sequence we have
$$\rH^{a-2b}_{Br,C_2}(\tilde{E}_{\Sigma _2}C _2)\rightarrow H^{a-2b+(p-q+b)\sigma+(b+q)\sigma\otimes\epsilon}_{Br,K}(pt) \rightarrow H^{a-2b+(p-q+b)\sigma+(b+q)\sigma\otimes\epsilon}_{Br,K}(E_{\Sigma _2}C _2)\rightarrow \rH^{a-2b+1}_{Br,C_2}(\tilde{E}_{\Sigma_2}C _2).$$
Moreover $\rH^{n}_{Br,C_2}(\tilde{E}_{\Sigma_2}C _2)=0$ if $n\leq 3$ and all the elements for $n>0$ from $$\rH^{n+1}_{Br,C_2}(\tilde{E}_{\Sigma _2}C _2)\simeq \rH^{n}_{Br,C_2}(B_{\Sigma _2}C _2)$$ are either zero or of the form $\frac{\theta_2}{x_2^{n'}y_2^m}c^pb^q$ with 4-degree $(2+m+q,0,-2-n'-m+p+q,0)$ where $p+q=2+m+n'$ and $2+m+q\geq 2$ and $p=0,1$ (see proof of Lemma \ref{gap}). One can notice that $\Sigma\theta_2 cb\in \rH^{4}_{Br,C_2}(\tilde{E}_{\Sigma_2}C _2,\Z/2)\simeq \Z/2$ is the nonzero element.  Moreover if $n\geq 5$ then $\rH^{n}_{Br,C_2}(\tilde{E}_{\Sigma_2}C_2,\Z/2)\neq 0$  because we can always choose $m,n',p,q\geq 0$ such that $m+q=n-2$ and $p=2+n'+m-q$ and $p=0,1$. For example, we can take the nonzero elements $\frac{\theta_2}{x_2^{p+n-4}}c^pb^{n-2}\in \rH^{n}_{Br,C_2}(B_{\Sigma_2}C_2,\Z/2)$ with $n\geq 4$ and $p=0,1$. They have their nonzero images $\Sigma \frac{\theta_2}{x_2^{p+n-4}}c^pb^{n-2}\in  \rH^{n+1}_{Br,C_2}(\tilde{E}_{\Sigma_2}C_2,\Z/2)\neq 0$.

Because the map
$$\rH^{n}_{Br,C_2}(\tilde{E}_{\Sigma _2}C _2)\rightarrow H^{n+(p-q+b)\sigma+(b+q)\sigma\otimes\epsilon}_{Br,K}(pt)$$
is the zero map from Theorem \ref{isov} ($b+q\geq 0$) for any $n\in \Z$ we obtain a split short exact sequence of $\Z/2$- vector spaces
$$0\rightarrow H^{a-2b+(p-q+b)\sigma+(b+q)\sigma\otimes\epsilon}_{Br,K}(pt) \stackrel{h}{\rightarrow} H^{a-2b+(p-q+b)\sigma+(b+q)\sigma\otimes\epsilon}_{Br,K}(E_{\Sigma _2}C _2)\rightarrow \rH^{a-2b+1}_{Br,C_2}(\tilde{E}_{\Sigma _2}C _2)\rightarrow 0.$$
From Diagram \ref{TEST4} we see that the surjectivity of the realization maps of $\EG C_2$ is equivalent to the surjectivity of the maps $h$ in the above short exact sequence. Therefore we deduce that the realization maps of $\EG C_2$ are not surjective if $a\geq 2b+3$.
 \end{proof}
 Corollary \ref{EG} describes the realization maps in the region $b+q\geq 0$, $q\geq 0$, $b<0$ of Figure \ref{TEST1}. Theorem \ref{Borel} computes all of the groups in the region $b+q\geq 0$, $b<0$, $q\geq 0$ of Figure \ref{TEST1}.
 
 The next remark gives examples of the realization maps in the range $b<0$, $b+q\geq 0$ and $a\geq 2b+3$ that are monomorphisms, but not isomorphisms.
 \begin{remark} \label{tcc}The realization homomorphisms $$H^{a+p\sigma, b+q\sigma}_{C _2}(\R,\Z/2)\hookrightarrow H^{a-b+(p-q)\sigma+b\epsilon+q\sigma\otimes\epsilon}_{Br,K}(pt,\Z/2)$$
are monomorphisms for $a\geq 2b+3$, $b<0$, $b+q\geq 0$ from Theorem \ref{EG}. There are examples where these realizations are not necessary isomorphisms. For example $$H^{-1,-2+2\sigma}_{C_2}(\R)\simeq \Z/2\hookrightarrow H^{1-2\sigma-2\epsilon+2\sigma\otimes\epsilon}_{Br,K}(pt)\simeq \Z/2\oplus\Z/2,$$
as well as 
$$H^{0,-2+2\sigma}_{C_2}(\R)\simeq \Z/2\hookrightarrow H^{2-2\sigma-2\epsilon+2\sigma\otimes\epsilon}_{Br,K}(pt)\simeq \Z/2\oplus\Z/2,$$
with the computations following from Theorem \ref{Borel} and Proposition \ref{caseMC2}. On the other hand
$$H^{1,-2+2\sigma}_{C_2}(\R)=0\simeq H^{3-2\sigma-2\epsilon+2\sigma\otimes\epsilon}_{Br,K}(pt)=0,$$
so we can also have isomorphisms in this range. For non-trivial isomorphisms, we have for example
$$H^{1,-2+3\sigma}_{C_2}(\R)=\Z/2\simeq H^{3-3\sigma-2\epsilon+3\sigma\otimes\epsilon}_{Br,K}(pt)=\Z/2.$$
 \end{remark}

We start analyzing the ring structure of Borel motivic cohomology of real numbers. First, we decide which topological cohomological classes come from corresponding algebraic cohomological classes via the realization maps. In these cases, we denote the algebraic cohomology class with the same notation with the induced topological cohomology class.

We have that $$x _3\in H^{\sigma,\sigma}_{C_2}(\R,\Z/2)\simeq H^{\sigma\otimes \epsilon} _{Br,C_2}(pt,\Z/2)\simeq H^{\sigma,\sigma}_{C_2}(\EG C _2,\Z/2)\simeq \Z/2$$ is the unique nontrivial class. Also $$y _3\in H^{\sigma-1,\sigma}_{C_2}(\R,\Z/2)\simeq H^{-1+\sigma\otimes\epsilon} _{Br,C_2}(pt,\Z/2)\simeq H^{\sigma-1,\sigma}_{C_2}(\EG C_2,\Z/2)\simeq \Z/2$$ is the unique nontrivial class. It is also obvious that $$y _1\in H^{\sigma-1,0}_{C_2}(\R,\Z/2)\simeq H^{-1+\sigma} _{Br,C_2}(pt,\Z/2)\simeq H^{\sigma-1,0}_{C_2}(\EG C_2,\Z/2)\simeq \Z/2$$ and $$x _1\in H^{\sigma,0}_{C_2}(\R,\Z/2)\simeq H^{\sigma} _{Br,C_2}(pt,\Z/2)\simeq H^{\sigma,0}_{C_2}(\EG C_2,\Z/2)\simeq \Z/2$$ are the unique nontrivial classes. 

 The cohomology groups of $E_{\Sigma_2}C_2$ in the above indexes are also isomorphic through the realization because they fulfill the condition $a\leq 2b+2$ (see Corollary \ref{EG}). 
 
 For example we have 
\[\begin{tikzcd}
	{x_3\in H^{\sigma,\sigma}_{C_2}(\EG C _2,\Z/2)} & {H^{\sigma,\sigma}_{C_2}(\R,\Z/2)} \\
	{H^{\sigma\otimes\epsilon}_{Br,C_2}(E_{\Sigma}C _2,\Z/2)} & {H^{\sigma\otimes\epsilon}_{Br,C_2}(pt,\Z/2)\simeq \Z/2}
	\arrow["\simeq", from=1-2, to=1-1]
	\arrow["\simeq"', from=1-2, to=2-2]
	\arrow["\simeq", from=1-1, to=2-1]
	\arrow["\simeq"', from=2-2, to=2-1]
\end{tikzcd}\]

We also have the following diagram:
\[\begin{tikzcd}
	{\kappa_2\in H^{2\sigma-2,\sigma-1}_{C_2}(\EG C _2,\Z/2)} & {H^{2\sigma-2,\sigma-1}_{C_2}(\R,\Z/2)} \\
	{H^{-1+\sigma-\epsilon+\sigma\otimes\epsilon}_{Br,K}(E_{\Sigma}C _2,\Z/2)} & {H^{-1+\sigma-\epsilon+\sigma\otimes\epsilon}_{Br,K}(pt,\Z/2)\simeq \Z/2}
	\arrow["\simeq", from=1-2, to=1-1]
	\arrow["\simeq"', from=1-2, to=2-2]
	\arrow["\simeq", from=1-1, to=2-1]
	\arrow["\simeq"', from=2-2, to=2-1]
\end{tikzcd}\]
The top left corner is generated by the class $\kappa_2$ and the bottom right corner is generated by its image, which we also denote by $\ka_2.$ In conclusion $\kappa_2$ becomes invertible in the cohomology ring $H^{\st}_{Br,K}(\ek,\Z/2)$ because the realization is a ring map. We have $$\theta_1\kappa _2=\io_3\in  H^{0,\sigma-1}_{C_2}(\R,\Z/2)\simeq H^{1-\sigma-\epsilon+\sigma\otimes\epsilon}_{Br,K}(pt,\Z/2)\simeq \Z/2.$$  The product is nontrivial because its realization is nontrivial.

We also have 
\[\begin{tikzcd}
	{\theta_1\in H^{2-2\sigma,0}_{C_2}(\EG C _2,\Z/2)} & {H^{2-2\sigma,0}_{C_2}(\R,\Z/2)} \\
	{H^{2-2\sigma}_{Br,C_2}(E_{\Sigma _2}C _2,\Z/2)} & {H^{2-2\sigma}_{Br,C_2}(pt,\Z/2)\simeq \Z/2}
	\arrow["\simeq", from=1-2, to=1-1]
	\arrow["\simeq"', from=2-2, to=1-2]
	\arrow["\simeq", from=1-1, to=2-1]
	\arrow["\simeq"', from=2-2, to=2-1]
\end{tikzcd}\]

The Borel motivic cohomology of real numbers is computed in the following theorem as a subring of the $RO(C_2\times \Sigma_2)$-graded Bredon cohomology of $E_{\Sigma_2}C_2$.
\begin{theorem} \label{compEG} We have the following commutative diagram of rings 
\[\begin{tikzcd}
	{H^{\star,\star}_{C_2}(\EG C _2,\Z/2)} && {H^{\star}_{Br,C_2}(E_{\Sigma_2} C _2,\Z/2)} \\
	{H^{\star,0}_{C_2}(\R,\Z/2)[x _3,y _3,\kappa_2^{\pm1}]} && {\MMt[x _3,y _3,\kappa_2^{\pm1}]}
	\arrow[tail, from=1-1, to=1-3]
	\arrow["\simeq", from=2-1, to=1-1]
	\arrow["\simeq"', from=2-1, to=2-3]
	\arrow[tail, from=2-3, to=1-3]
\end{tikzcd}\]
Thus, $$H^{\st,\st}_{C_2}(\EG C _2,\Z/2)\simeq \MMt[x _3,y _3,\kappa_2^{\pm 1}].$$

Moreover if $p\leq q-b-2$ the Borel motivic cohomology of real numbers has all nonzero elements nilpotent. 
\end{theorem}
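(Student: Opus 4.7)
The plan is to construct an explicit ring homomorphism
$$\Phi : \MMt[x_3, y_3, \kappa_2^{\pm 1}] \longrightarrow H^{\star,\star}_{C_2}(\EG C_2, \Z/2),$$
prove it is an isomorphism by combining Theorem \ref{Borel} (the additive picture) with Corollary \ref{EG} (injectivity of Betti realization), and deduce the nilpotence claim from the multiplication table of $\MMt$ recorded in Theorem \ref{Br}. The domain of $\Phi$ makes sense because $\kappa_2$ implements the $(2\sigma-2,\sigma-1)$-periodicity of Borel motivic cohomology, and the canonical generators $x_3,\, y_3,\, \kappa_2$ were identified in the diagrams immediately preceding the theorem as the unique nontrivial classes in their respective bidegrees. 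Combined with the $\MMt$-module structure coming from $H^{\star,0}_{C_2}(\R,\Z/2)\simeq \MMt$, this data defines $\Phi$.

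For bijectivity I first reduce to an additive comparison. A monomial $\alpha \cdot x_3^m y_3^n \kappa_2^k$ with $\alpha \in \MMt$ of bidegree $(a_0+p_0\sigma,0)$ lives in total bidegree
$$\bigl(a_0-n-2k+(p_0+m+n+2k)\sigma,\ -k+(m+n+k)\sigma\bigr).$$
Given a target bidegree $(a+p\sigma,b+q\sigma)$ with $b+q\geq 0$, one solves $k=-b$, $m+n=q+b$, $p_0=p-q+b$, and $a_0=a+n-2b$; the $(m+n+1)$ admissible splits of $m+n$ match the direct summands appearing in the proof of Theorem \ref{2q}, so $\Phi$ is bijective in each bidegree with $b+q\geq 0$, and both sides vanish when $b+q<0$ (by Theorem \ref{Borel} and the known vanishing of motivic cohomology in negative weights).

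To upgrade to a ring isomorphism, I compose with Betti realization, which is a ring map into $H^{\star}_{Br}(E_{\Sigma_2}C_2)$ sending $x_3, y_3, \kappa_2$ to their topological namesakes. Corollary \ref{EG} makes this realization injective whenever $a\leq 2b+2$, and by Theorem \ref{Borel} all nonzero classes of $H^{\star,\star}_{C_2}(\EG C_2,\Z/2)$ lie in that range. The image of $\Phi$ inside $H^{\star}_{Br}(E_{\Sigma_2}C_2)$ visibly coincides with the subring $\MMt[x_3,y_3,\kappa_2^{\pm 1}]$ described in the paragraphs before the theorem (and in \cite{DV}); since realization is a ring monomorphism on the whole of Borel motivic cohomology, $\Phi$ itself must respect products and is therefore a ring isomorphism.

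Finally, for the nilpotence claim, observe from the degree count above that $p_0 = p-q+b$ is determined by the target bidegree. If $p\leq q-b-2$, then $p_0\leq -2$, so every class $\xi \in H^{a+p\sigma,b+q\sigma}_{C_2}(\EG C_2,\Z/2)$ decomposes as $\xi = \sum_i \alpha_i\, x_3^{m_i} y_3^{n_i} \kappa_2^{k_i}$ with each $\alpha_i$ in the negative cone $NC\subset \MMt$ of Theorem \ref{Br}. Since $NC$ has zero products, $\alpha_i\alpha_j=0$ for all $i,j$, and expanding yields $\xi^2=0$. I expect the main obstacle to be the ring-structure upgrade: realization is not globally surjective onto its target, so one must exploit precisely that the injectivity range $a\leq 2b+2$ of Corollary \ref{EG} exactly contains the support provided by Theorem \ref{Borel}.
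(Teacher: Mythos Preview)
Your overall strategy mirrors the paper's, and the nilpotence argument is clean and correct. But the bijectivity of $\Phi$ has a genuine gap. Matching $\Z/2$-dimensions in each bidegree (which follows from Proposition \ref{PC}, not Theorem \ref{2q}---that result is about the point, not $\EG C_2$) shows source and target have equal rank, but you still need injectivity or surjectivity of $\Phi$. Your realization paragraph does not supply this: Corollary \ref{EG} makes realization injective on $H^{\star,\star}_{C_2}(\EG C_2)$, but to conclude that $\mathrm{Re}\circ\Phi$ is injective you would need to know that the abstract ring $\MMt[x_3,y_3,\kappa_2^{\pm 1}]$ already embeds in $H^\star_{Br}(E_{\Sigma_2}C_2)$. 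Via the square in Theorem \ref{Borel} this is precisely the statement that $\MMt[x_3,y_3]$ embeds in $\bigoplus_{q'\geq 0} H^{*+*\sigma+q'\sigma\otimes\epsilon}_{Br}(pt)$, which is what you are trying to prove---so the argument is circular. The paper breaks this circularity by establishing \emph{surjectivity} directly: it splits into Case 1 ($p'=p-q+b\geq 0$) and Case 2 ($p'<0$) and exhibits, against the Poincar\'e series of Proposition \ref{PC}, that every class is a monomial $x_1^{n_1}y_1^{m_1}x_3^{n_2}y_3^{m_2}$ or $\frac{\theta_1}{x_1^{n_1}y_1^{m_1}}x_3^{n_2}y_3^{m_2}$ respectively; surjectivity plus your dimension count then gives bijectivity.

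Two smaller issues. Your claim that all nonzero Borel classes satisfy $a\leq 2b+2$ is false: for instance $\frac{\theta_1}{y_1}\in H^{3-3\sigma,0}_{C_2}(\EG C_2)$ is nonzero with $a=3>2=2b+2$. This is harmless, since Corollary \ref{EG} already gives injectivity of realization for \emph{all} $a$, but as written your justification is wrong. And the reference to ``direct summands in the proof of Theorem \ref{2q}'' is misplaced; the decomposition you want is the factorisation of the Poincar\'e polynomial in Proposition \ref{PC}(d),(e) after applying Theorem \ref{Borel}.
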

\begin{proof} Firstly, we have that $H^{\st,0}_{C_2}(\R,\Z/2)\simeq H^{\st,0}_{C_2}(\EG C _2,\Z/2)$. Secondly, according to Proposition \ref{Borel}, the Borel motivic cohomology ring of real numbers, after multiplication with $\kappa_2^{\pm{1}}$, can be reduced to a ring that is isomorphic (as rings) to the cohomology ring $\oplus_{a,p\in\Z,q\geq 0}H^{a+p\sigma+q\sigma\otimes\epsilon} _{Br,K}(pt,\Z/2)$. Therefore we have an isomorphism  of rings 
$$H^{\st,\st}_{C_2}(\EG C _2,\Z/2)\simeq (\oplus_{a,p\in\Z,q\geq 0}H^{a+p\sigma+q\sigma\otimes\epsilon} _{Br,K}(pt,\Z/2))[\ka_2^{\pm 1}]$$

We want to determine the ring $\oplus_{a,p\in\Z,q\geq 0}H^{a+p\sigma+q\sigma\otimes\epsilon} _{Br,K}(pt,\Z/2)$ which is a proper subring in $RO(C_2\times \Sigma_2)$-graded Bredon cohomology of a point. We distinguish two cases: 

Case 1: $p\geq 0,$ and  

Case 2: $p<0$. 

{\bf{Case 1:}} Let $p, q\geq 0$. Then the groups $H^{a+p\sigma+q\sigma\otimes\epsilon} _{Br,K}(pt,\Z/2)$ are computed by the Poincare series
$(1+x+...+x^p)(1+x+...+x^q)$ (see Theorem \ref{PC}). It implies that the groups are zero if $a>0$. We will prove that all their generators are given by elements of the form $x^{n_1} _1y^{m_1}_1x^{n_2}_3y^{m_2} _3$ with $n_1, m _1, n_2, m_2\geq 0$. Since $|x_1|=(0,1,0,0)$, $|x _3|=(0,0,0,1)$, $|y_1|=(-1,1,0,0)$ and $|y_3|=(-1,0,0,1),$ the monomial $x^{n_1} _1y^{m_1}_1x^{n_2}_3y^{m_2} _3$ has degree $(-m_1-m_2,n_1+m _1,0,n_2+m_2)$, and belongs to $H^{a+p\sigma+q\sigma\otimes\epsilon} _{Br,K}(pt,\Z/2)$.  We have that $0\leq m _1\leq p$ and $0\leq m _2\leq q$. Notice that if we fix a pair $(m_1, m_2),$ we have a unique element in the Poincare sum containing $y_1^{m _1}y_3^{m_2}$ such that $a=-m _1-m _2$. Furthermore, a term $y_1^{m _1}y_3^{m_2}$ in the Poincare sum gives a unique pair $(m _1,m_2)$ (order sensitive) in $H^{a+p\sigma+q\sigma\otimes\epsilon} _{Br,K}(pt,\Z/2)$ with $n_1=p-m_1$ and $n _2=q-m _2$. 

{\bf{Case 2:}} Let $q\geq 0$ and $p<0$. Then the groups are computed by the Poincare series $(x^{p}+...+x^{-2})(1+x+...+x^q)$ (see Theorem \ref{PC}). In particular $p\leq -2$.

We will prove that all the non-zero elements in this case are given by the generators
$\frac{\theta _1}{x^{n_1} _1y^{m_1}_1}x^{n_2} _3y^{m_2} _3$. Here $\theta _1$ has degree $(2,-2,0,0)$. A monomial $\frac{\theta _1}{x^{n_1} _1y^{m_1}_1}x^{n_2} _3y^{m_2} _3$ has degree $(2+m _1-m _2, -2-n_1-m_1,0, n_2+m_2)$. It implies that $0\leq m _2\leq q$ and $0\leq m_1\leq -p-2$. Any choice of a pair $(m _1,m _2)$ in these intervals gives a unique monomial $y_1^{-m_1-2}y_3^{m_2}$ and a unique $\frac{\theta _1}{x^{n_1} _1y^{m_1}_1}x^{n_2} _3y^{m_2} _3$ for a fixed choice of $p\leq -2,q\geq 0$. For $p=-1$ the groups are zero. The group is nonzero if $2-q\leq a\leq -p$. 

There are no relations among the generators of Case 1 and Case 2 as proved in \cite{BH}.

It implies that we have an isomorphism of  $\Z/2-$vector spaces  $$\oplus_{a,p\in\Z,q\geq 0}H^{a+p\sigma+q\sigma\otimes\epsilon} _{Br,K}(pt,\Z/2)=<\{x^{n_1} _1y^{m_1}_1x^{n_2}_3y^{m_2} _3\}_{n_1, m _1, n_2, m_2\geq 0}, \{\frac{\theta _1}{x^{n_1} _1y^{m_1}_1}x^{n_2} _3y^{m_2} _3\}_{n_1,m_1,n_2,m_2\geq 0}>.$$
Because we have the isomorphism of $\Z[x_1,y_1]-$algebras $$H^{*+*\sigma}_{Br,C_2}(pt,\Z/2)= \Z/2[x_1,y_1]\oplus \Z/2\{\frac{\theta_1}{x_1^{n_1}y_1^{m_1}}\}$$ we conclude that we have an isomorphism of rings
$$\oplus_{a,p\in\Z,q\geq 0}H^{a+p\sigma+q\sigma\otimes\epsilon} _{Br,K}(pt,\Z/2)\simeq H^{*+*\sigma}_{Br,C_2}(pt,\Z/2)[x_3,y_3].$$
The Case 2 above corresponds to the case $p\leq q-b-2$ in Diagram \ref{TEST4} concluding the last statement of the theorem.
\end{proof}

The following corollary describes the $\MMt$-module structure of the region $b+q\geq 0,b<0,q\geq 0$ in Figure \ref{TEST1}.
\begin{corollary}\label{aplic} We have the following $\MMt$-module isomorphism $$\oplus _{b<0} H^{a+p\sigma,b+q\sigma} _{C_2}(\EG C _2,\Z/2)\simeq \kappa_2(\MMt[x _3,y _3,\kappa_2]).$$
\end{corollary}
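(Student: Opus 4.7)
The plan is to deduce this from the ring description in Theorem \ref{compEG}. That theorem identifies $H^{\star,\star}_{C_2}(\EG C_2,\Z/2)$ with the graded ring $\MMt[x_3,y_3,\kappa_2^{\pm 1}]$, so we need only identify which monomials contribute to the subsum with $b<0$.

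First I would record the weight of each generator. The ring $\MMt = H^{*+*\sigma}_{\Br}(\pt,\Z/2)$ is embedded in weight $0+0\sigma$; the class $x_3$ has weight $\sigma$ (i.e.\ $b=0$, $q=1$); similarly $y_3$ has weight $\sigma$; and $\kappa_2$ has weight $\sigma-1$ (i.e.\ $b=-1$, $q=1$). Hence a typical monomial
\[
m\cdot x_3^{i}y_3^{j}\kappa_2^{k},\qquad m\in\MMt,\ i,j\geq 0,\ k\in\Z,
\]
has weight $(-k)+(i+j+k)\sigma$, so its $b$-coordinate equals $-k$ and its $q$-coordinate equals $i+j+k$.

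Next I would observe that the condition $b<0$ on such a monomial is equivalent to $k\geq 1$. Thus, under the isomorphism of Theorem \ref{compEG}, the direct sum
\[
\bigoplus_{b<0}H^{a+p\sigma,b+q\sigma}_{C_2}(\EG C_2,\Z/2)
\]
corresponds exactly to the $\MMt$-submodule of $\MMt[x_3,y_3,\kappa_2^{\pm 1}]$ spanned by those monomials with strictly positive power of $\kappa_2$, namely the principal ideal $\kappa_2\cdot\MMt[x_3,y_3,\kappa_2]$. This yields the asserted equality.

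There is essentially no obstacle once Theorem \ref{compEG} is in hand: the argument is a bookkeeping step tracking the weight coordinates of the generators. The only subtlety worth flagging is to verify that no cancellation or hidden relation among $x_3,y_3,\kappa_2$ and $\MMt$ mixes the $b=-k$ and $b=-k'$ strata for $k\ne k'$, which is automatic because the ring isomorphism in Theorem \ref{compEG} respects the $RO(C_2)$-bigrading.
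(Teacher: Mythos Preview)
Your proposal is correct and is exactly the intended argument: the paper states this corollary without proof immediately after Theorem \ref{compEG}, treating it as an immediate consequence, and your bookkeeping of the $b$-coordinate of each generator is precisely what is needed to extract the $b<0$ summand from the ring description $\MMt[x_3,y_3,\kappa_2^{\pm 1}]$.
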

 \begin{theorem} \label{cn} We have the following diagram over the complex numbers 
%\begin{figure}[H]
\begin{equation}
\label{TEST5}
\begin{tikzcd}
	{H^{a+p\sigma,b+q\sigma}_{C_2}(\EG C _2,\Z/2)} & {H^{a+p\sigma}_{Br,C_2}(EC _2,\Z/2)} \\
	{H^{a-2b+(p+2b)\sigma,(b+q)\sigma}_{C_2}(\EG C_2,\Z/2)} & {H^{a-2b+(p+2b)\sigma}_{Br,C_2}(EC _2,\Z/2)} \\
	{H^{a-2b+(p+2b)\sigma,(b+q)\sigma}_{C_2}(\C,\Z/2)} & {H^{a-2b+(p+2b)\sigma}_{Br,C_2}(pt,\Z/2)}
	\arrow[from=1-1, to=1-2]
	\arrow["{\simeq u^b}", from=1-1, to=2-1]
	\arrow["\simeq"', from=3-1, to=2-1]
	\arrow["\simeq"', from=3-1, to=3-2]
	\arrow[from=3-1, to=3-2]
	\arrow[from=3-2, to=2-2]
	\arrow["{\simeq u^{b}}"', from=1-2, to=2-2]
	\arrow[from=2-1, to=2-2]
\end{tikzcd}
\end{equation}

Here $u$ is the cohomology class in degree $(2\sigma-2,\sigma-1)$ that gives the periodicity in Borel motivic cohomology of the complex numbers.
\end{theorem}
\begin{proof} The arguments for Diagram \ref{TEST5} are similar to those for Diagram \ref{TEST4}. The cohomology class $u$ gives the periodicity in the Bredon motivic cohomology of $\EG C_2$ and its image through the realization map  induces the periodicity in the Bredon cohomology of $EC_2$.
\end{proof}
We see from Diagram \ref{TEST5} that over the complex numbers the multiplication by $\tau_\sigma\in H^{0,\sigma}_{C_2}(\EG C _2,\Z/2)$ (the only non-trivial element in this group) gives an isomorphism on Borel motivic cohomology of real numbers if $b+q\geq 0$. This is because it induces multiplication by $1$ for the $RO(C_2)$ graded Bredon cohomology of a point in the bottom right corner. Thus, because of this isomorphism and because of the periodicity, we compute the Borel motivic cohomology ring over the complex numbers to be $$H^{\st,\st}_{C_2}(\EG C _2,\Z/2)=\MMt [\tau_\sigma, u^{\pm 1}].$$

%\begin{remark} Over the complex numbers or over the real numbers, all the elements in $H^{a+p\sigma,b+q\sigma}_{C_2}(\EG C_2,\Z/2)$ for $a\geq 2b+1$ are either zero or nilpotent. In particular over the complex numbers, all the realization maps for $a\geq 2b+1$ are zero because there are no nontrivial nilpotents in the $RO(C_2)$-graded Bredon cohomology of $EC_2$, and the realizations are ring maps. This was proved with other methods in \cite{HOV2}. 
%\end{remark}

\begin{remark} \label{Vcomp} We have the following ring structure on the motivic cohomology of $\mathbf{B}C _2$ (over the reals, \cite{Voc}, reviewed in Theorem \ref{comp2}), $$H^{*,*} (\mathbf{B}C _2,\Z/2)=H^{*,*}(\R,\Z/2)[s,t]/(s^2=\tau t+\rho s).$$ According to Diagram \ref{TEST4}, we can identify $\tau=\frac{y _1y _3}{\kappa_2}$, $\rho=\frac{x_1y_3+y_1x _3}{\kappa_2}$ and $t=\frac{x _1x _3}{\kappa_2}$. We notice that $\tau$ and $t$ are the unique generators of their groups; for $s,\rho$ we have 3 distinct choices of elements in $$H^{-1+\sigma+\sigma\otimes\epsilon}_{Br,K}(pt,\Z/2)=\Z/2(y_1x _3)\oplus \Z/2 (x _1y _3),$$ and according to the relation $s^2=\tau t+\rho s,$ the choice of $\rho$ is unique. Because the realization of $\rho$ is $x_2$ and the realization of $\tau$ is $y_2$ from Proposition \ref{classic} we obtain the following relations 
in  $RO(C_2\times \Sigma_2)$-graded Bredon cohomology of $E_{\Sigma_2}C_2$ 
$$\kappa_2y_2=y_1y_3,$$
$$\kappa_2x_2=x_1y_3+y_1x_3.$$
Because of Theorem \ref{isov} we can view these relations in $RO(C_2\times \Sigma_2)$-graded Bredon cohomology of a point giving in this way an alternate proof of these relations to the proof in \cite{BH}.

There are two equally good choices for $s$: $s=\frac{y_1x_3}{\kappa_2}$ and $s=\frac{y_3x_1}{\kappa_2}$.  We also obtain the following relations in $RO(C_2\times\Sigma_2)-$graded Bredon cohomology of $E_{\Sigma_2}C_2$: $c\ka_2=y_1x_3$ and $b\ka_2=x_1x_3$. Here $c$ and $b$ are the cohomology classes appearing in Theorem \ref{comp1}.
\end{remark}

\begin{remark} \label{cbg} We have that the motivic cohomology over complex numbers of $\mathbf{B} C _2$ (\cite{Voc}) is $$H^{*,*}(\mathbf{B}C _2,\Z/2)=H^{*,*}(\C,\Z/2)[s,t]/(s^2=\tau t)=\Z/2[s,t,\tau]/(s^2=\tau t).$$ Here $s$ is in bidegree $(1,1)$, $t$ is in bidegree $(2,1)$ and $\tau\in H^{0,1}(\C,\Z/2)$. According to Diagram \ref{TEST5}, in the complex case we can identify $s=\frac{\sigma\alpha}{u}$ (because $su=\sigma\alpha\in H^{-1+2\sigma}_{Br,C_2}(pt)$), $t=\frac{\sigma^2}{u}$ (because $tu=\sigma^2\in H^{2\sigma} _{Br,C_2}(pt)$), and $\tau=\frac{\alpha^2}{u}$ (because $u\tau=\alpha^2\in H^{-2+2\sigma}_{Br,C_2}(pt)$) as the unique generators of their respective groups. Here $\sigma$ is in degree $\sigma$, and $\alpha$ is in degree $\sigma-1$. Notice that in this case the relation $s^2=\tau t$ becomes obvious now. 

The complex realization of $\tau$ is $1$, so the relation $u\tau=\alpha^2$ becomes $Re(u)=\alpha^2$, which is obvious in the $RO(C_2)$-graded Bredon cohomology of $EC_2$.
\end{remark}
\begin{remark} \label{repr} The computation of the Borel motivic cohomology of the complex and real numbers can be used to independently compute the motivic cohomology of $\mathbf{B}C_2$ over the complex or real numbers. Using Theorem \ref{Borel} (which is independent of Voevodsky's computation of motivic cohomology of $\mathbf{B}C_2$) we identify the motivic cohomology ring of $\mathbf{B}C_2$ over the reals with $$(\oplus_{a\in \Z,b\geq 0} H^{a+b\sigma+b\sigma\otimes\epsilon}_{Br,K}(pt,\Z/2))[\kappa_2^{-1}],$$ and therefore it is given by the set of the following elements (with the obvious multiplication) : $$\sum \frac{x^{n_1}_1y^{m_1}_1x^{n_2}_3y^{m_2}_3}{\ka_2^b},$$ with $n_i,m_i\geq 0$ and $n_1+m_1=b=n_2+m_2, a=-m_1-m_2$. The non-trivial ring generators are chosen from those elements with $b=1$ as in Remark \ref{Vcomp}. The sums above give a different, but equivalent, set of $\Z/2$-vector space generators for the motivic cohomology ring of $\mathbf{B}C_2$.

In the complex case, the computation is simpler. The motivic cohomology ring of $\mathbf{B}C_2$ over the complex numbers can be identified with  $$\oplus_{a\in \Z,b\geq 0}H^{a+2b\sigma}_{Br,C_2}(pt,\Z/2)$$ and is therefore given by the elements
$$\sum \frac{\alpha^n\sigma^m}{u^{b}}$$
with $n+m=2b,n,m\geq 0$. The non-trivial ring generators are those elements (3 distinct elements with a relation among them) with $b=1, n+m=2, n,m\geq 0$. See Remark \ref{cbg}.
\end{remark}

\section{Bredon motivic cohomology of the real numbers}
In this section, we conclude that the Bredon motivic cohomology ring of the real numbers is a subring in the $RO(C_2\times \Sigma_2)$-graded Bredon cohomology of a point, and find a decomposition into $\MMt$-modules for it.

Let $$R:=\oplus_{b\geq 0,b+q\geq 0}H^{a+p\sigma,b+q\sigma}_{C_2}(\R,\Z/2)\hookrightarrow H^{\st,\st}_{C_2}(\R,\Z/2),$$
which is a cohomology subring on which the realization maps give an isomorphism (see the region $b,q\geq 0,b+q\geq 0$ of Figure \ref{TEST1}). 

We know from Theorem \ref{posisom} that
$R$ is ring isomorphic to a subring in the $RO(C_2\times \Sigma_2)$-graded Bredon cohomology of a point. We have that the subring
\begin{equation} \label{123}
R\simeq \oplus_{a,p\in \mathbb{Z},b\geq 0,b+q\geq 0} H^{a+p\sigma+b\epsilon+q\sigma\otimes\epsilon}_{Br,K}(pt,\Z/2)
\end{equation}
is given by a direct sum of four distinct pieces depending on the signs of $p$ and $q$. 

One piece that is contained above is simply the positive cone (see Theorem \ref{pcon}), which corresponds to the case where $p,q\geq 0$ (topological indexes) i.e.
$$\frac{\Z/2[x_1,y_1,x_2,y_2,x_3,y_3]}{(x_1y_2y_3+y_1x_2y_3+y_1y_2x_3)}.$$

Notice that $f=x_1y_2y_3+y_1x_2y_3+y_1y_2x_3=0$ is a trivial relation on $R$ because $R$ also contains the cohomological class $\ka_1\in H^{0,1+\sigma} _{C_2}(\R,\Z/2)$. This is because one can write, according to the relations in the $RO(C_2\times \Sigma_2)$-graded Bredon cohomology of a point (see \ref{Vcomp1} and \ref{Vcomp2}), that $$f=x_1\ka_1y_1+y_1x_2y_3+y_1y_2x_3=x_2y_1y_3+x_3y_2y_1+x_2y_1y_3+x_3y_1y_2=0.$$ The relations are also available in the subring $R$ because of the isomorphism \ref{123}. In conclusion, the subring $R$ contains the subring $\Z/2[x_1,y_1,x_2,y_2,x_3,y_3]$ modulo the relation $f=0$ and, as a subring, the motivic cohomology ring of $\R,$ which is $\Z/2[x_2,y_2]$ (see Proposition \ref{classic}). 

Besides these classes, other important topological cohomology classes belong to $R,$ including, for example, $\theta_1, \ka_1,\ka_3, \io_2$.

From Corollary \ref{EG} we know that $$M:=\oplus_{b<0, b+q\geq 0}H^{a+p\sigma,b+q\sigma} _{C_2}(\R,\Z/2)$$ is a $\MMt$-submodule in the $RO(C_2\times \Sigma_2)$-graded Bredon cohomology of a point with $\ka_2\in M$. Corollary \ref{aplic} shows that $$M\simeq \kappa_2 (\MMt[x_3,y_3,\kappa_2])\hookrightarrow H^{\st}_{Br,K}(pt,\Z/2).$$ 
Notice that this monomorphism is identity on all the generators $x_1,y_1,\frac{\theta_1}{x^n_1y^m_1}, x_3,y_3, \ka_2$ and therefore is identity on $M$ (here $\MMt$ is identified with $H^{\st,0}_{C_2}(\R,\Z/2)$).

It implies that the direct sum $P:=R\oplus M$ identifies with a subring in the $RO(C_2\times \Sigma_2)$-graded Bredon cohomology of a point. Obviously, from the previous section's discussion on motivic cohomology classes, $$\MMt[x_3,y_3]\subset R$$ and $$\ka_2\notin R.$$
Therefore we obtain the following description of $P$:
\begin{proposition} We have the following isomorphism of rings  $$P\simeq (R,\kappa_2)\subset H^\st _{Br,K}(pt,\Z/2),$$
where $(R,\kappa_2)$ is the subring in $H^\st _{Br,K}(pt,\Z/2)$ generated by $R$ and $\kappa_2$. 
\end{proposition}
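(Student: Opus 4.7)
The plan is to realize $P$ as a subring of $H^\st_{Br}(pt,\Z/2)$ under Betti realization, and identify this image with $(R,\kappa_2)$. Injectivity of the realization on $P$ is already in place: on $R$ this is the isomorphism from Theorem \ref{posisom}, while on $M$ it is the monomorphism from Corollary \ref{EG}, whose image is identified in Corollary \ref{aplic} as $\kappa_2\,\MMt[x_3,y_3,\kappa_2]$. Because $R$ and $M$ occupy disjoint bidegree ranges (distinguished by the sign of the $\epsilon$-coordinate), their images in $H^\st_{Br}(pt,\Z/2)$ intersect trivially, so $P=R\oplus M$ embeds as a graded $\MMt$-module.

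The inclusion $P\subseteq(R,\kappa_2)$ is immediate: $R\subseteq(R,\kappa_2)$ tautologically, and $M=\kappa_2\,\MMt[x_3,y_3,\kappa_2]$ is built out of $\kappa_2$ together with elements of $\MMt$, $x_3$ and $y_3$, all of which lie in $R$ by the discussion preceding Theorem \ref{compEG}. For the reverse inclusion $(R,\kappa_2)\subseteq P$, since $R\subseteq P$ and $\kappa_2\in M\subseteq P$, it suffices to prove that $P$ is multiplicatively closed in the ambient ring $H^\st_{Br}(pt,\Z/2)$.

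To see this multiplicative closure, I would verify $R\cdot M\subseteq P$ (and, similarly, $M\cdot M\subseteq P$) via a bidegree accounting together with the multiplicative relations of Section 2.2. A typical product is $r\cdot\kappa_2^n g$ with $r\in R$ of motivic bidegree $(a+p\sigma,b+q\sigma)$ (so $b,b+q\geq 0$), $n\geq 1$, and $g\in\MMt[x_3,y_3]$. Multiplication by $\kappa_2$ lowers the $\epsilon$-coordinate by one but preserves the sum of $\epsilon$- and $\sigma\otimes\epsilon$-coordinates; thus the product lands in a bidegree with the latter sum nonnegative, i.e.\ either in the blue region of Figure \ref{TEST1} (if the $\epsilon$-coordinate is still nonnegative) or in the green cone. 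In the first case, surjectivity of the realization on $R$ from Theorem \ref{posisom} puts the product into the image of $R$; in the second, the relations $\kappa_2 x_2=x_1y_3+y_1x_3$, $\kappa_2 y_2=y_1y_3$, $\kappa_2\kappa_1=y_3^2$, $\kappa_2\kappa_3=y_1^2$, $\kappa_2\theta_1=\iota_3$, $\kappa_2\iota_2=0$ allow us to rewrite $r\kappa_2^n g$ as an element of $\kappa_2\,\MMt[x_3,y_3,\kappa_2]=$~image of $M$. The main obstacle is making this rewriting systematic across all generators of $R$ (including the negative-cone pieces $\theta_1/(x_1^n y_1^m)$ and the mixed classes $\kappa_1,\kappa_3,\iota_2$); once complete, $P=(R,\kappa_2)$ as subrings of $H^\st_{Br}(pt,\Z/2)$, and the asserted ring isomorphism follows.
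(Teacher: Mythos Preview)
Your overall structure is sound: you correctly identify that the heart of the matter is showing that $P$ is multiplicatively closed inside $H^\st_{Br}(pt,\Z/2)$, after which both inclusions $P\subseteq(R,\kappa_2)$ and $(R,\kappa_2)\subseteq P$ follow from $\MMt[x_3,y_3]\subset R$ and $\kappa_2\in M$. However, your verification of multiplicative closure is incomplete---you yourself flag the ``main obstacle'' of systematically rewriting products involving all generators of $R$ (including the negative-cone and mixed classes), and you do not carry this out.

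More importantly, you are missing the one-line argument that makes this obstacle disappear. The point is that $P$ is, by construction, the image under Betti realization of the graded piece
\[
\bigoplus_{b+q\geq 0} H^{\st,b+q\sigma}_{C_2}(\R,\Z/2)
\]
of Bredon motivic cohomology. This graded piece is \emph{automatically} a subring of $H^{\st,\st}_{C_2}(\R,\Z/2)$, simply because the second gradings add under cup product and the condition $b+q\geq 0$ is closed under addition. Since Betti realization is a ring homomorphism and is injective on this range (isomorphism on $R$ by Theorem~\ref{posisom}, monomorphism on $M$ by Corollary~\ref{EG}, with disjoint $\epsilon$-degrees as you note), its image $P$ is a subring of $H^\st_{Br}(pt,\Z/2)$. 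This is exactly how the paper argues: the sentence ``It implies that the direct sum $P:=R\oplus M$ is a subring\ldots'' just before the proposition is carrying this weight. No relation-chasing in the target is needed.
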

We notice that $\theta_1,\io_3,\ka_i\in P$ are nontrivial cohomology classes and $\theta_2,\theta_3\notin P$. For a review of these cohomological classes see Section 2.2.

We will discuss next the region $b+q<0$, $b\geq 0$ from Figure \ref{TEST1}.

Let $$NC:=\oplus _{b\geq 0, b+q<0}H^{\st,b+q\sigma} _{C_2}(\R,\Z/2),$$ which is a $\MMt$-submodule of the Bredon motivic cohomology of real numbers (see Corollary \ref{itilde} and Theorem \ref{ncinj}), and also fits into an inclusion of multiplicative maps $$NC\hookrightarrow \rH^{\st,\st} _{C_2}(\wt\EG C_2,\Z/2)\hookrightarrow \rH^{\st}_{Br,K}(\tilde{E}_{\Sigma_2}C_2,\Z/2).$$ We have that the quotient map $$\tilde{E}_{\Sigma_2}C_2\rightarrow \tilde{E}_{\Sigma_2}C_2/C_2\stackrel{htpy}{\simeq} \Sigma B_{\Sigma_2}C_2$$
induces an isomorphism of (non-unital) commutative rings $$\rH^{*+*\epsilon}_{Br,C_2}(\tilde{E}_{\Sigma_2}C_2/C_2)\simeq \rH^{*+*\epsilon}_{Br,C_2}(\Sigma B_{\Sigma_2}C_2).$$ Therefore, $NC$ has zero products, because the above suspension gives the commutative ring $$\rH^{\st}_{Br,C_2}(\tilde{E}_{\Sigma_2}C_2)\simeq\rH^{*+*\epsilon}_{Br,C_2}(\Sigma B_{\Sigma_2}C_2)[x_1^{\pm{1}},x_3^{\pm{1}}]$$
with zero multiplication. 

The image of $NC$ in $\rH^{\st}_{Br,K}(\tilde{E}_{\Sigma_2}C_2,\Z/2)$ is $$NC\subset\oplus _{2\leq a\leq 2b+1}\rH^{a-b+b\epsilon} _{Br,C_2}(\Sigma B_{\Sigma_2}C_2,\Z/2)[x_1^{\pm 1},x^{\pm 1}_3]\simeq \oplus _{1-b\leq a\leq b}\rH^{a+b\epsilon} _{Br,C_2}(B_{\Sigma_2}C_2,\Z/2)[x_1^{\pm 1},x^{\pm 1}_3].$$
This implies, using the $RO(\Sigma_2)-$graded Bredon cohomology of $B_{\Sigma_2}C_2$ from Theorem \ref{comp1}, that $$NC\subset \{x^n_2y^m_2\Sigma (b^pc),x^n_2y^m_2\Sigma (b^{p+1})\}[x^{\pm 1}_1,x^{-1}_3],$$
the subset with the degree of $x^{-1}_3$ given by $q\leq -b\leq 0,$ where $b$ is the degree of $\epsilon$ and $p\geq 0$ i.e.
 $$NC=\oplus _{a\leq 2b+1} x^{-b-1}_3\rH^{a+b\epsilon}_{Br,C_2}(\Sigma B_{\Sigma_2}C_2,\Z/2)[x^{\pm 1}_1,x^{-1}_3]=$$
 $$=\{x^{-n-m-p-2}_3x^n_2y^m_2\Sigma (b^pc),x^{-n-m-p-1}_3x^n_2y^m_2\Sigma (b^{p+1})\}[x^{\pm 1}_1,x^{-1}_3].$$
% $$=\{(\frac{x_2}{x_3})^n(\frac{y_2}{x_3})^m\frac{\Sigma (b^pc)}{(x_3)^{p+2}},(\frac{x_2}{x_3})^n(\frac{y_2}{x_3})^m\frac{\Sigma (b^p)}{(x_3)^{p+1}}\}[x^{\pm 1}_1,x^{-1}_3].$$
Moreover, for $a\leq 2b+1$, $b\geq 0$, $b+q<0$ we have that
  \[
\xymatrixrowsep{0.3in}
\xymatrixcolsep{0.25in}
\xymatrix@-0.9pc{
\rH^{a+p\sigma,b+q\sigma} _{C _2}(\wt \EG C _2,\Z/2)\ar[r]^{\simeq}\ar[d]^{\simeq}&
H^{a+p\sigma,b+q\sigma} _{C _2}(\R,\Z/2)\ar[d]
\\ 
\rH^{a-b+(p-q)\sigma+b\epsilon+q\sigma\otimes\epsilon} _{Br,K}(\tilde{E} _{\Sigma _2}C _2,\Z/2)\ar[r]&
H^{a-b+(p-q)\sigma+b\epsilon+q\sigma\otimes\epsilon} _{Br,K}(pt,\Z/2)
}
\]
and from Proposition \ref{ncinj} (see also Proposition \ref{ncinj2}) we obtain an injective multiplicative map $$NC\hookrightarrow H^{\st} _{Br,K}(pt,\Z/2),$$ with zero products in the domain.

 $NC$ has a $\MMt$-module structure given by $$\frac{\theta_1}{x_1^{n_1}y_1^{m_1}}\alpha=0;$$ and $$y_1\frac{x^n_2y^m_2\Sigma (b^pc)}{x^{n+m+p+2}_3}=x_1\frac{x^{n+1}_2y_2^m\Sigma cb^{p-1}}{x_3^{n+m+p+2}}+x_1\frac{y_2^{m+1}x_2^n\Sigma b^p}{x_3^{n+m+p+2}},p\geq 1;$$ and 
$$y_1\frac{x^n_2y^m_2\Sigma b^p}{x^{n+m+p+1}_3}=x_1\frac{x_2^{n}y_2^{m}\Sigma cb^{p-1}}{x^{n+m+p+1}_3},p\geq 1;$$ and
$$y_1\frac{x^n_2y^m_2\Sigma c}{x^{n+m+1}_3}=0.$$

We can give now a description of the Bredon motivic cohomology ring of real numbers in the following theorem:
\begin{theorem}\label{ft} We have a decomposition of  $\MMt$-modules included through a ring map in $H^{\st}_{Br,K}(pt,\Z/2)$
$$H^{\st,\st} _{C_2}(\R,\Z/2)=(R,\kappa_2)\oplus NC\hookrightarrow H^{\st}_{Br,K}(pt,\Z/2)$$
with $\kappa_2$ in degree $(2\sigma-2,\sigma-1)$, and $(R,\kappa_2)\hookrightarrow H^\st_{Br,K}(pt,\Z/2)$ a subring, and $NC$ is a $\MMt$-submodule with zero products. 
\end{theorem} 
\begin{proof} The first $\MMt-$module in the statement's direct sum corresponds to $b+q\geq 0$ and the second corresponds to $b+q<0$. The structure of $\MMt-$module is given by the fact that multiplication by elements in $\oplus H^{a+p\sigma,0} _{C_2}(\R,\Z/2)\simeq \oplus H^{a+p\sigma}_{Br,C_2}(pt,\Z/2)$ doesn't change the cohomological weight. The ring structure is induced from the ring structure of $RO(C_2\times\Sigma_2)-$graded Bredon cohomology ring of a point via the realization which is injective. From the above discussion, $NC$ has zero products and it is injectively embedded in $RO(C_2\times \Sigma_2)-$graded Bredon cohomology of a point. It is naturally a $\MMt-$module because $NC$ has weight $b+q<0$ and multiplication by elements of $\MMt$ don't change the weight.
\end{proof}
The image of $NC$ in $H^{\st}_{Br,K}(pt,\Z/2)$ is more complicated and was determined in (\cite{DV}, Theorem 5.0.6). As we saw above the image of $(R,\kappa_2)$ in $H^{\st}_{Br,K}(pt,\Z/2)$ is identity. In \cite{DV} we determine $H^{\st,\st} _{C_2}(\R,\Z/2)$ completely in terms of cohomology classes and relations as a subring in $H^{\st}_{Br,K}(pt,\Z/2)$. For example we prove there that $(R,\kappa_2)$ can contain nilpotents and multiplication between an element in $NC$ and and element in $(R,\kappa_2)$ can produce a nontrivial nilpotent in $(R,\kappa_2)$. 

Notice that the negative cone of the $RO(C_2\times \Sigma_2)$-graded Bredon cohomology ring of a point  (by definition $b,p,q<0$, see Section 2.1) is completely outside the image of the realization map from Theorem \ref{ft} because of the vanishing range of Bredon motivic cohomology given in Proposition \ref{vanr}. 

Also, according to Remark \ref{tcc} and Remark \ref{ncc}, there are other topological classes outside the negative cone that are not in the image of this realization map. Moreover the topological cohomological classes $\io_1$  and $\theta_2$ are not in the image of the realization map and do not belong to the negative cone or to the examples in Remarks \ref{tcc} and \ref{ncc}.

We obtain from Theorem \ref{ft} or Theorem \ref{prop:ptiso} the following interesting subrings of $(R,\kappa_2)$:
\begin{corollary} We have that $$H^{*+*\sigma,*}_{C_2}(\R,\Z/2)\simeq \MMt[x_2,y_2],$$
 $$H^{*+*\sigma,*\sigma}_{C_2}(\R,\Z/2)\simeq \MMt[x_3,y_3].$$
\end{corollary}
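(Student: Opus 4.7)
The plan is to deduce both isomorphisms from the pointwise realization identifications already established, and then recognize the resulting bigraded direct sums as polynomial rings by matching the explicit generators $x_2, y_2$ (respectively $x_3, y_3$) with motivic classes over $\R$ identified earlier in the paper.

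For the first isomorphism fix $q = 0$. The range $b < 0$ is handled by Proposition~\ref{vanr}, while for $b \geq 0$ Proposition~\ref{prop:ptiso} yields $H^{a+p\sigma, b}_{C_2}(\R, \Z/2) \cong H^{a-b+p\sigma+b\epsilon}_{Br}(pt, \Z/2)$. Summing over $a, p \in \Z$ and $b \geq 0$, the total group is identified with the subring of $H^{\st}_{Br,K}(pt, \Z/2)$ consisting of classes whose $\epsilon$-coefficient is nonnegative and whose $\sigma \otimes \epsilon$-coefficient vanishes. Under this identification the classical motivic classes $x_2 = \rho \in H^{1,1}_{C_2}(\R)$ and $y_2 = \tau \in H^{0,1}_{C_2}(\R)$ from Proposition~\ref{classic} realize to the Bredon generators in degrees $\epsilon$ and $-1+\epsilon$; combined with the identification $\MMt = H^{\star, 0}_{C_2}(\R, \Z/2)$, the subring coincides with $\MMt[x_2, y_2]$. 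The positive-cone part of this matching follows from Theorem~\ref{pcon} after setting $x_3 = y_3 = 0$ (the cubic relation trivializes), and the negative-cone part consists of the $\theta_1$-divided classes of $\MMt$ multiplied by monomials in $x_2, y_2$.

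For the second isomorphism fix $b = 0$. When $q \geq 0$, Theorem~\ref{posisom} gives $H^{a+p\sigma, q\sigma}_{C_2}(\R, \Z/2) \cong H^{a+(p-q)\sigma + q\sigma\otimes\epsilon}_{Br}(pt, \Z/2)$. When $q < 0$, Proposition~\ref{tilde} identifies $H^{a+p\sigma, q\sigma}_{C_2}(\R)$ with $\rH^{a+p\sigma, q\sigma}_{C_2}(\wt\EG C_2)$, and the $(\sigma, 0)$- and $(0, \sigma)$-periodicities of $\wt\EG C_2$ reduce this to $\rH^{a, 0}_{C_2}(\wt\EG C_2)$, which vanishes by Proposition~\ref{x}. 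After reindexing $p' = p - q$, the total group becomes the subring of $H^{\st}_{Br,K}(pt, \Z/2)$ with vanishing $\epsilon$-coefficient and nonnegative $\sigma \otimes \epsilon$-coefficient, which by the analogous Theorem~\ref{pcon} argument (now with $x_2 = y_2 = 0$) is $\MMt[x_3, y_3]$, where $x_3 \in H^{\sigma, \sigma}_{C_2}(\R)$ and $y_3 \in H^{\sigma-1, \sigma}_{C_2}(\R)$ are the classes already pinpointed before Theorem~\ref{compEG}. The main step to verify is that none of the remaining nontrivial generators of the ambient $RO(C_2 \times \Sigma_2)$-graded ring, namely $\theta_2, \theta_3, \Theta, \kappa_i, \io_i$, fall into the relevant sub-bigrading; this is immediate from their $\epsilon$- and $\sigma \otimes \epsilon$-coefficients listed in Section~2.2, so no extra classes contribute and no extra relations on $\MMt[x_i, y_i]$ arise.
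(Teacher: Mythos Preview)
Your proof is correct and follows essentially the paper's approach. The paper only writes ``from Theorem~\ref{ft} or Theorem~\ref{prop:ptiso}'' without further elaboration, while you spell out the realization isomorphisms in the relevant range, the vanishing outside it, and the identification of the resulting Bredon subring as $\MMt[x_i,y_i]$; this last step is exactly the dimension count carried out in Cases~1 and~2 of Theorem~\ref{compEG} (with $\epsilon$ and $\sigma\otimes\epsilon$ interchanged for the first isomorphism), so your reference to that argument is appropriate. One minor simplification: for the second isomorphism with $b=0$, $q<0$, you invoke Proposition~\ref{tilde} together with the periodicities and Proposition~\ref{x}, but Proposition~\ref{vanr} already gives the vanishing directly since $b=0\leq 0$ and $b+q=q<0$.
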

\subsection{Real closed fields}
\begin{theorem}\label{rc} Let $k$ be a real closed field. Then
$$H^{\st,\st} _{C_2}(\R,\Z/2)\simeq H^{\st,\st} _{C_2}(k,\Z/2),$$
$$H^{\st,\st} _{C_2}({\EG C_2},\Z/2)\simeq H^{\st,\st} _{C_2}({\EG C_2}_k,\Z/2).$$
 \end{theorem}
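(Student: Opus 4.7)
The plan is to observe that the computations of Theorem \ref{ft} and Theorem \ref{compEG} proceed through only two genuinely field-dependent motivic inputs, both of which agree with their $\mathbb{R}$-counterparts over any real closed field $k$:
\begin{enumerate}
\item[(i)] $H^{*,*}(k,\Z/2) \cong \Z/2[\rho,\tau]$ (Proposition \ref{classic}). By the Milnor conjecture this reduces to $K_n^M(k)/2$, and for any real closed field the latter equals $\mathbb{F}_2\cdot[-1]^n$, exactly as for $\mathbb{R}$.
\item[(ii)] $H^{*,*}(\mathbf{B}C_{2,k},\Z/2) \cong H^{*,*}(k)[s,t]/(s^2=\tau t+\rho s)$ (Theorem \ref{comp2}). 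Voevodsky's argument in \cite{Voc} works over any real closed base field, as the generators $\rho,\tau,s,t$ are defined by universal motivic constructions (the class of $[-1]$, the unique class in $H^{0,1}$, a Bockstein of the tautological class, and an Euler class).
\end{enumerate}

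Next, I would verify that every remaining step in Sections~3--6 is formal in the base field. The motivic isotropy cofiber sequences \eqref{eqn:cof1} and \eqref{eqn:cof2} exist over any $k$, together with the $(2\sigma-2,\sigma-1)$-periodicity of $\EG C_2$ and the $(0,\sigma)$- and $(\sigma,0)$-periodicities of $\wt\EG C_2$. The decomposition of $C_\ast z(V)^{\Z/2}$ in $DM^-(k)$ from \cite{Nie} and the vanishing $\rH^{a,b}_{C_2}(\wt\EG C_2,\Z/2)=0$ for $a\leq 1$ from \cite{V} hold over any field. The derived vanishing Proposition \ref{v2n} and the computation of $H^{*,*}_{C_2}(W_n,\Z/2)$ both follow from (i), (ii), and cofiber-sequence arguments, hence extend. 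With these in place, Theorems \ref{2q}, \ref{posisom}, \ref{Borel} and \ref{compEG} produce the same groups and the same $\MMt$-algebra structure over $k$ as they do over $\mathbb{R}$, because each answer is read off from (i) and (ii).

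The subtle point, and the main obstacle, is that Betti realization was used throughout to \emph{name} generators and to match Bredon motivic groups with the $RO(C_2\times\Sigma_2)$-graded Bredon cohomology of a point or of $E_{\Sigma_2}C_2$. For real closed $k$ this functor is not available, so I would re-interpret the target sides of Theorems \ref{ft} and \ref{compEG} without reference to topology: the ring $(R,\kappa_2)$ is generated over $\MMt$ by the explicit motivic classes $\rho,\tau,x_1,y_1,x_3,y_3,\kappa_2,\theta_1$ subject to relations that come from (i) and (ii) via the identifications recorded in Remarks \ref{Vcomp} and \ref{cbg}, and $NC$ is identified with an explicit truncation of $\rH^{\star,\star}_{C_2}(\Sigma\mathbf{B}C_2,\Z/2)[\sigma^{\pm 1},\epsilon^{\pm 1}]$ by the proof of Proposition \ref{x}, again depending only on (ii). The $\EG C_2$-statement is even simpler, since it reduces via the Borel isomorphism $H^{\star,\star}_{C_2}(\EG C_2,\Z/2)\simeq H^{*,*}(\mathbf{B}C_2,\Z/2)[\kappa_2^{\pm 1}]$ (up to periodicity) directly to (ii). An alternative, cleaner packaging is the following rigidity trick: embed both $k$ and $\mathbb{R}$ into a sufficiently large real closed field $k'$; the functoriality of Bredon motivic cohomology in the base field gives pullback ring maps $H^{\star,\star}_{C_2}(\mathbb{R},\Z/2)\to H^{\star,\star}_{C_2}(k',\Z/2)$ and $H^{\star,\star}_{C_2}(k,\Z/2)\to H^{\star,\star}_{C_2}(k',\Z/2)$, and by (i) and (ii) both are isomorphisms, which proves the theorem. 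The same argument applies to the $\EG C_2$-version, completing the proof.
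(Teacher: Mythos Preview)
Your approach is quite different from the paper's and, as written, has a real gap. The paper does not re-examine any of the computations in Sections 3--6. Instead it applies a general rigidity theorem: the functor $U\mapsto H^{a+p\sigma,b+q\sigma}_{C_2}(X\times U,\Z/2)$ is a homotopy invariant presheaf with equivariant transfers (\cite{HOV}, \cite{HOV1}), hence its restriction to $\Sm/k$ is a pseudo pretheory, and the rigidity theorem of \cite{RO} (cf.\ \cite{HO}, Theorem~4.18) gives $F(U_k)\cong F(U_\R)$ for any real closed $k\subset\R$. For arbitrary real closed $k$ one writes $k=\bigcup_\alpha k_\alpha$ as a filtered union of real closed subfields of finite transcendence degree over $\Q$, each of which embeds in $\R$ (\cite{HO}, Theorem~2.20), and concludes by passing to the colimit. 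This treats Bredon motivic cohomology as a black box and never revisits Betti realization.

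Your plan, by contrast, requires re-running every five-lemma argument in Theorems~\ref{2q}, \ref{posisom}, \ref{Borel}, \ref{compEG} over $k$ with some purely motivic comparison map substituted for the realization column. You identify this as ``the main obstacle'' but do not actually supply the surrogate map or verify that the long exact sequences split the same way; the connecting homomorphisms in the paper are pinned down \emph{via} realization, not independently of it. Your closing ``rigidity trick'' is closer in spirit to the paper's argument, but asserting that the base-change maps into a common $k'$ are isomorphisms ``by (i) and (ii)'' presupposes exactly the field-independence you are trying to prove. The paper avoids this circularity by invoking a rigidity theorem valid for any pseudo pretheory with finite coefficients, independent of any prior computation of its values.
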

\begin{proof} Bredon motivic cohomology $$F(U)=H^{a+p\sigma,b+q\sigma}_{C_2}(X\times U)$$ is a homotopy invariant presheaf with equivariant transfers for any $X$ a smooth $C_2$-scheme (\cite{HOV}, \cite{HOV1}) over a field $k$ of characteristic zero. In particular, the restriction of $F$ to $Sm/k$ is a pseudo pretheory on $Sm/k$. From the main result of $\cite{RO}$ (with the comment of \cite{HO}, Theorem 4.18) we know that if $k\subset \R$ is a real closed subfield then $$F(U)\simeq F(U_\R)$$ for any smooth $C_2$-scheme $U$ over $k,$ implying the result in the theorem. 
Let $k$ be an arbitrary real closed field and $L=k[i]$ the corresponding algebraic closed field. We can write $L=\cup L_\alpha$ with $L_\alpha$ a subfield of $L$ of finite transcendence degree over $\mathbb{Q}$ and $\alpha\in A$ a well-order set and $k=\cup k_\alpha$ where $k_\alpha=L_\alpha\cap k$. In Theorem 2.20 \cite{HO} it is proved that $k_\alpha$ is isomorphic to a real closed field embedded in $\R$. Using the above considerations, we conclude that for a $C_2$-smooth scheme $X$ we have 
$$H^{a+p\sigma,b+q\sigma}_{C_2}(X_k)=colim_\alpha H^{a+p\sigma,b+q\sigma}_{C_2}(X_{k_\alpha})=H^{a+p\sigma,b+q\sigma}_{C_2}(X_\R).$$ 
\end{proof}
For a real closed field $k\subset \R$ and $X$ a $C_2$-smooth scheme over $k$ there is a cycle map  
$$cyc_k: H^{a+p\sigma,b+q\sigma}_{C_2}(X,\Z/2)\rightarrow  H^{a+p\sigma,b+q\sigma}(X_\R,\Z/2)\rightarrow H^{a-b+(p-q)\sigma+b\epsilon+q\sigma\otimes\epsilon}_{Br,K}(X(\C),\Z/2).$$
Therefore, according to the proof of Theorem \ref{rc} and Theorem \ref{ft}, we conclude that the Bredon motivic cohomology ring of a real closed field embedded in $\R$ is, up to isomorphism, a nontrivial proper subring in the $RO(C_2\times \Sigma_2)$-graded Bredon cohomology ring of a point. 

We also conclude that the Borel motivic cohomology ring of a real closed field embedded in $\R$ is a nontrivial subring in the $RO(C_2\times \Sigma_2)$-graded Bredon cohomology ring of $E_{\Sigma_2}C_2$. 

In the case of a $C_2$-smooth scheme over a real closed field $k\subset \R$ the range of isomorphism for $cyc_k$ is in general much more restricted. As a generalization of [\cite{HO}, Theorem 4.18] and of [\cite{HV}, Corollary 5.13] and according to [\cite{HOV1}, Theorem 7.10] we obtain the following:
\begin{corollary} Let $X$ be a smooth  scheme over a real closed field $k$ embedded in $\R$. Then the cycle map $cyc_k$ 
$$cyc_k: H^{a+p\sigma,b+q\sigma}_{C_2}(X,\Z/2)\rightarrow H^{a-b+(p-q)\sigma+b\epsilon+q\sigma\otimes\epsilon}_{Br,K}(X(\C),\Z/2).$$
is an isomorphism if $a+p\leq b+q$ and $a\leq min\{b-q,b\}$ and a monomorphism if $a+p\leq b+q+1$ and $a\leq min\{b-q,b\}+1$. 
\end{corollary}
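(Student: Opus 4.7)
The plan is to reduce the cycle map comparison over an arbitrary real closed field $k\subset\R$ to a Betti realization statement over $\R$ itself, and then invoke the range-of-isomorphism theorem already established in the equivariant motivic setting.

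First I would factor $cyc_k$ as the composition
\[
H^{a+p\sigma,b+q\sigma}_{C_2}(X,\Z/2)\xrightarrow{\rho_k} H^{a+p\sigma,b+q\sigma}_{C_2}(X_\R,\Z/2)\xrightarrow{\Re} H^{a-b+(p-q)\sigma+b\epsilon+q\sigma\otimes\epsilon}_{Br}(X(\C),\Z/2),
\]
where $\rho_k$ is the pullback along the extension $k\hookrightarrow \R$ and $\Re$ is the usual Betti realization. By the proof of Theorem \ref{rc}, which is really a statement about any smooth $C_2$-scheme $U$ (not just a point), the homotopy invariant presheaf with equivariant transfers $U\mapsto H^{a+p\sigma,b+q\sigma}_{C_2}(X\times_k U,\Z/2)$ satisfies the pseudo-pretheory hypotheses of \cite{RO} and therefore $\rho_k$ is an isomorphism for every $X$ smooth over the real closed field $k$. (For general real closed $k$ not already embedded in $\R$, the same colimit argument as in Theorem \ref{rc} reduces to the embedded case.)

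Thus the problem collapses to analyzing $\Re$ over $\R$. Here I would directly cite \cite[Theorem 7.10]{HOV1}, which gives a Beilinson–Lichtenbaum type range for the Betti realization of Bredon motivic cohomology of any smooth $C_2$-scheme over a real closed field; translating the stated motivic bidegree $(a+p\sigma,b+q\sigma)$ through that theorem yields precisely the conditions
\[
a+p\leq b+q,\qquad a\leq\min\{b-q,b\}
\]
for $\Re$ to be an isomorphism, and the shifted-by-one conditions for $\Re$ to be a monomorphism. Combining with the isomorphism $\rho_k$ from the previous paragraph gives the claim about $cyc_k$.

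The main obstacle here is not computational but bookkeeping: one must match the indexing convention of the present paper (where the grading involves both the integer and the $\sigma$-parts, with Betti realization sending $(a+p\sigma,b+q\sigma)\mapsto a-b+(p-q)\sigma+b\epsilon+q\sigma\otimes\epsilon$) with the range-of-comparison indexing of \cite[Theorem 7.10]{HOV1}. Once the translation is done, the conditions $a+p\leq b+q$ (a cohomological-degree-versus-weight bound, generalizing classical Beilinson–Lichtenbaum) and $a\leq\min\{b-q,b\}$ (an integer-part bound coming from the two copies of the sign representation that enter the realization) fall out. The monomorphism statement is then the standard one-degree extension, obtained either from the long exact sequence in the Beilinson–Lichtenbaum argument or directly from \cite[Theorem 7.10]{HOV1}. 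No new ideas beyond those already recorded in Theorem \ref{rc} and the cited rigidity/comparison results are required.
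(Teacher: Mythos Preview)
Your proposal is correct and matches the paper's own argument: the paper simply states the corollary as a consequence of the rigidity in Theorem~\ref{rc} (i.e.\ the base-change isomorphism $\rho_k$) together with \cite[Theorem~7.10]{HOV1} applied over~$\R$, citing also \cite[Theorem~4.18]{HO} and \cite[Corollary~5.13]{HV} as the non-equivariant precursors. There is no additional content in the paper beyond exactly the two-step reduction you describe.
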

\bibliographystyle{plain}
 \bibliography{road1}

\vspace{10pt}
	\noindent
	Bill Deng\\
	UNSW Sydney\\
	Department of Mathematics and Statistics\\
	NSW 2052 Australia\\
	\texttt{bill.deng@student.unsw.edu.au}
	
\vspace{10pt}
	\noindent
	Mircea Voineagu\\
	UNSW Sydney\\
	Department of Mathematics and Statistics\\
	NSW 2052 Australia\\
	\texttt{m.voineagu@unsw.edu.au}
\end{document}